\newtheorem{theorem}{Theorem}[section]
\newtheorem{corollary}[theorem]{Corollary}
\newtheorem{proposition}[theorem]{Proposition}
\newtheorem{lemma}[theorem]{Lemma}
\newtheorem{definition}[theorem]{Definition}
\theoremstyle{remark}
\newtheorem{remark}[theorem]{Remark}
\newtheorem{example}[theorem]{Example}
\numberwithin{equation}{section}
\newcommand{\defn}[1]{{\emph{#1}}} 
\newcommand{\ta}{\theta}
\newcommand{\C}{\mathbb C}
\newcommand{\N}{\mathbb N}
\newcommand{\naturals}{\mathbb N}
\newcommand{\Z}{\mathbb Z}
\newcommand{\E}{\mathbb E}
\newcommand{\eqdef}{\mbox{\,\raisebox{0.2ex}{\scriptsize\ensuremath{\mathrm:}}\ensuremath{=}\,}} 
\newcommand{\wBinom}[3]{{}_{\stackrel{\phantom w}{\stackrel{\phantom #3}#3}}\!\!
	\begin{bmatrix}#1\\#2\end{bmatrix}} 
\newcommand{\eBinom}[3]{{\begin{bmatrix}#1\\#2\end{bmatrix}}_{#3}} 
\newcommand{\wBinomText}[3]{{}_{\stackrel{\phantom #3}#3}\!
\left[\begin{smallmatrix}#1\\#2\end{smallmatrix}\right]} 
\newcommand{\hSet}[2]{\{#1|#2\}} 
\newcommand{\nSet}[2]{\{#1 \cdots #2\}} 
\definecolor{dblackcolor}{rgb}{0.0,0.0,0.0}
\definecolor{dredcolor}{rgb}{0.9,0.3,0.4}
\definecolor{dbluecolor}{rgb}{0.01,0.02,0.7}
\definecolor{dgreencolor}{rgb}{0.2,0.5,0.0}
\definecolor{dgraycolor}{rgb}{0.30,0.3,0.30}
\definecolor{gr}{rgb}{0.10,0.5,0.20}
\definecolor{myblue}{HTML}{0489B1}
\definecolor{myred}{HTML}{A40000}
\author[Josef K\"ustner]{Josef K\"ustner$^{*}$}
\address{Fakult\"at f\"ur Mathematik, Universit\"at Wien,
	Oskar-Morgenstern-Platz~1, A-1090 Vienna, Austria}
\email{josef.kuestner@univie.ac.at}
\thanks{$^{*}$ Partially supported by FWF Austrian Science Fund
	grant P32305.}
\author[Michael J.\ Schlosser]{Michael J.\ Schlosser$^{*}$}
\address{Fakult\"at f\"ur Mathematik, Universit\"at Wien,
	Oskar-Morgenstern-Platz~1, A-1090 Vienna, Austria}
\email{michael.schlosser@univie.ac.at}
\author[Meesue Yoo]{Meesue Yoo$^{**}$}
\address{Department of Mathematics, Chungbuk National University,
	Cheongju 28644, South Korea}
\email{meesueyoo@chungbuk.ac.kr}
\thanks{$^{**}$ Partially supported by the National Research Foundation of
	Korea (NRF) grant funded by the Korea government
	No.\! 2020R1F1A1A01064138.}
      \title[Negatively indexed weight-dependent binomial coefficients]{Lattice
        paths and negatively indexed weight-dependent binomial coefficients}
\subjclass[2010]{Primary 05E16;
	Secondary 05E05, 11B65, 11B73, 33E05}
      \keywords{Binomial coefficients, commutation relations,
        symmetric functions, Stirling numbers, $q$-commuting variables,
        elliptic-commuting variables, elliptic binomial coefficient,
        elliptic hypergeometric series}
\begin{document}

\begin{abstract}
In 1992, Loeb \cite{L} considered a natural extension of the binomial
coefficients to negative entries and gave a combinatorial interpretation
in terms of hybrid sets. He showed that many of the fundamental properties
of binomial coefficients continue to hold in this extended setting. Recently,
Formichella and Straub \cite{FS} showed that these results can be extended
to the $q$-binomial coefficients with arbitrary integer values and extended
the work of Loeb further by examining arithmetic properties of the
$q$-binomial coefficients. In this paper, we give an alternative combinatorial
interpretation in terms of lattice paths and consider an extension of the more
general weight-dependent binomial coefficients, first defined by the second
author \cite{Schl1}, to arbitrary integer values. Remarkably, many of the
results of Loeb, Formichella and Straub continue to hold in the general
weighted setting. We also examine important special cases of the
weight-dependent binomial coefficients, including ordinary, $q$- and elliptic
binomial coefficients as well as elementary and complete homogeneous symmetric
functions.
\end{abstract}

\maketitle


\section{Introduction}\label{secintro}
Loeb \cite{L} studied a generalization of the binomial coefficients
$\binom{n}{k}=\frac{n!}{k!(n-k)!}$, where $n$ and $k$ are allowed to be
negative integers. He defined them for integers $n$ and $k$ in terms of
\[
  \binom{n}{k} \eqdef \lim_{\epsilon \mapsto 0}
  \frac{\Gamma(n+1+\epsilon)}{\Gamma(k+1+\epsilon)\Gamma(n-k+1+\epsilon)}.
\] 
From this definition it follows that the binomial coefficients with integer
values satisfy the recursion
\begin{subequations}\label{def:bineq}
	\begin{align}
		&\binom{n}{0}=\binom{n}{n} =1
		\qquad\text{for\/ $n\in \mathbb{Z}$},\label{def:bineq1}\\
          \intertext{and for $n,k\in\mathbb{Z}$,
          provided that $(n+1,k)\neq(0,0)$,}
		&\binom{n+1}{k}=
		\binom{n}{k}
		+\binom{n}{k-1},
		 \label{def:bineq2}
	\end{align}
\end{subequations}
and they can be fully characterized by this recursion.
The binomial coefficients with integer values appear in the power series
expansion of $(x+y)^n$ ($n$ being an arbitrary integer) in two ways.
Suppose $f_n(x,y)$ is a function with power series expansions
\begin{subequations}
\begin{align}
  f_n(x,y)&=\sum_{k\geq 0} a_k x^k y^{n-k}
  \quad\; \text{in\; $\C[[x,y,y^{-1}]]$} \\
  \intertext{or}
  f_n(x,y)&=\sum_{k\leq n} b_{k} x^{k} y^{n-k}
  \quad\; \text{in\; $\C[[x,x^{-1},y]]$},
\end{align}
\end{subequations}
then we extract coefficients of the expansions by writing
$[x^ky^{n-k}]f_n(x,y) = a_k$ for $k\geq0$ and $[x^ky^{n-k}]f_n(x,y) = b_k$
for $k<0$ (see \cite{FS}). We have the following extension of the
binomial theorem.
\begin{theorem}[\cite{FS,L}]\label{thm:loeb_binom}
	For $n,k \in \Z$,
	$$[x^ky^{n-k}](x+y)^n = \binom{n}{k}.$$
\end{theorem}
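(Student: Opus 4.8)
The plan is to reduce the statement to the two explicit binomial-series expansions of $(x+y)^n$ and then verify that the resulting coefficients satisfy the characterizing recursion \eqref{def:bineq}. First I would expand, for every $n\in\Z$,
\[
(x+y)^n=y^n(1+x/y)^n=\sum_{k\geq 0}\binom{n}{k}x^ky^{n-k}
\quad\text{in }\C[[x,y,y^{-1}]],
\]
and
\[
(x+y)^n=x^n(1+y/x)^n=\sum_{j\geq 0}\binom{n}{j}x^{n-j}y^{j}
=\sum_{k\leq n}\binom{n}{n-k}x^{k}y^{n-k}
\quad\text{in }\C[[x,x^{-1},y]],
\]
where on the right $\binom{n}{m}=\frac{n(n-1)\cdots(n-m+1)}{m!}$ is the classical generalized binomial coefficient (nonnegative lower index $m$). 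A preliminary point to address is that $x+y$ is a unit in each ring, its inverse in the first being $\sum_{j\geq 0}(-1)^jx^jy^{-j-1}$ and in the second its $x\leftrightarrow y$ mirror; this guarantees that $(x+y)^n$ is a well-defined element for negative $n$ and that the factorization $(x+y)^{n+1}=(x+y)(x+y)^n$ is legitimate. Reading off coefficients then identifies $c_{n,k}:=[x^ky^{n-k}](x+y)^n$ with $\binom{n}{k}$ for $k\geq 0$, with $\binom{n}{n-k}$ for $k<0$ and $k\leq n$, and with $0$ for $n<k<0$.

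Next I would check the boundary conditions \eqref{def:bineq1}. For every $n$ one reads $c_{n,0}=\binom{n}{0}=1$ from the first expansion; and $c_{n,n}=1$ as well, obtained from the first expansion when $n\geq 0$ (as $\binom{n}{n}$) and from the second when $n<0$ (as $\binom{n}{0}$).

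The core of the argument is the Pascal recursion \eqref{def:bineq2}, which I would derive from the identity $(x+y)^{n+1}=x\,(x+y)^n+y\,(x+y)^n$ by extracting $[x^ky^{n+1-k}]$: multiplication by $x$ contributes $c_{n,k-1}$ and multiplication by $y$ contributes $c_{n,k}$, yielding $c_{n+1,k}=c_{n,k}+c_{n,k-1}$. The main obstacle is that the coefficient-extraction rule switches rings at $k=0$, so this manipulation is immediately valid only when all three indices lie on the same side of the boundary: for $k\geq 1$ the indices $(n+1,k),(n,k),(n,k-1)$ all use the first ring, and for $k\leq -1$ they all use the second ring, so in these ranges the recursion follows at once (and these ranges never meet the excluded pair $(n+1,k)=(0,0)$). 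The one genuinely cross-boundary case is $k=0$, where the recursion reduces to the claim $c_{n,-1}=0$. I would verify this from the second expansion: $c_{n,-1}=\binom{n}{n+1}=0$ for $n\geq 0$, and $c_{n,-1}=0$ for $n\leq -2$ since then $-1>n$. The sole exception is $c_{-1,-1}=\binom{-1}{0}=1\neq 0$, but this is exactly the pair $(n+1,k)=(0,0)$ excluded in \eqref{def:bineq2}; thus the exclusion is precisely what the identity forces, and nothing more.

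Having checked \eqref{def:bineq1} and \eqref{def:bineq2}, the fact recalled in the introduction that the recursion characterizes the sequence uniquely identifies $c_{n,k}$ with $\binom{n}{k}$, which proves the theorem. As a fallback I would keep in reserve a direct evaluation of Loeb's Gamma-function limit in each sign region, using $\Gamma(z)^{-1}\sim(-1)^mm!\,(z+m)$ near $z=-m$ to track the cancellation of poles and zeros among the three Gamma factors; this reproduces the same region-by-region formulas but is computationally heavier, so I would prefer the recursion route.
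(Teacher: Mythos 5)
Your proof is correct, and it takes a genuinely different route from the one available in the paper. The paper does not prove Theorem~\ref{thm:loeb_binom} itself (it is cited from \cite{FS,L}); the closest in-house argument is the proof of Theorem~\ref{thm:extbinomial}, whose specialization $w(s,t)\equiv 1$ recovers this statement. That proof runs in the opposite direction from yours: it starts from the recursion-defined coefficients and establishes the expansion in $\C_w[[x,y,y^{-1}]]$ by downward induction on $n$ (base case $n=-1$ via a geometric series, inductive step via the Pascal-type recursion \eqref{def:wbineq2} and invertibility of $x+y$), and then obtains the second expansion in $\C_w[[x,x^{-1},y]]$ by applying the involution \eqref{eqn:inv1} together with the reflection formula of Theorem~\ref{thm:refl1}. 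You instead take the two classical Newton expansions $y^n(1+x/y)^n$ and $x^n(1+y/x)^n$ as given --- legitimate here precisely because the variables commute, though this is exactly the step with no analogue in the weighted noncommutative setting --- and then verify that the extracted coefficients satisfy the characterizing data \eqref{def:bineq}. Your handling of the cross-ring case $k=0$ is the right point to isolate: there the recursion degenerates to the claim $c_{n,-1}=0$, and your observation that $c_{-1,-1}=1$ is exactly the excluded pair $(n+1,k)=(0,0)$ explains why that exclusion in \eqref{def:bineq2} is forced rather than conventional. One caveat: your conclusion leans on the assertion, stated after \eqref{def:bineq} in the paper but not proved there, that the recursion together with the boundary conditions determines the doubly-indexed family uniquely; this is true, but for a self-contained argument you should include the short verification (first force rows $n\geq 0$ by induction on $|k|$, then force rows $n<0$ one at a time downward, using $c_{n,0}=1$ and sweeping outward in $k$ in both directions, the excluded pair never being needed). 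In exchange for that dependence, your route is shorter and purely computational; the paper's inductive and involutive route is heavier but is the one that survives generalization to arbitrary weights.
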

This theorem essentially means that we have the two expansions 
\begin{align*}
(x+y)^n&=\sum_{k\geq 0} \binom{n}{k} x^k y^{n-k}\\
\intertext{and}
(x+y)^n&=\sum_{k \leq n} \binom{n}{k} x^k y^{n-k}.
\end{align*}
If $n\geq 0$, the expansions coincide, but if $n<0$, they are different.
For $n<0$ the first expansion is an expansion in $x$ and $y^{-1}$,
while the second is an expansion in $x^{-1}$ and $y$. 

Recently, Formichella and Straub \cite{FS} extended this theorem to a
$q$-binomial expansion. They considered an extension of the
$q$-binomial coefficients to arbitrary integer values which can be
characterized by the recursive definition
\begin{subequations}\label{def:qbineq}
	\begin{align}
		&\eBinom{n}{0}{q}=\eBinom{n}{n}{q} =1
		\qquad\text{for\/ $n\in \mathbb{Z}$},\label{def:qbineq1}\\
          \intertext{and for $n,k\in\mathbb{Z}$,
          provided that $(n+1,k)\neq(0,0)$,}
		&\eBinom{n+1}{k}{q}=
		\eBinom{n}{k}{q}
		+\eBinom{n}{k-1}{q} q^{n+1-k}.
		 \label{def:qbineq2}
	\end{align}
\end{subequations}
They proved the following generalization of Theorem~\ref{thm:loeb_binom}. 

\begin{theorem}\label{thm:qbinomial}
  Suppose we have $yx=qxy$ for invertible variables $x$ and $y$,
  and an invertible indeterminate $q$. Then, for $n,k\in \Z$,
	\begin{equation}\label{eqn:qbinomial}
		[x^k y^{n-k}](x+y)^n = \eBinom{n}{k}{q}.
	\end{equation}
\end{theorem}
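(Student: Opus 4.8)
The plan is to show that the quantity $c(n,k) \eqdef [x^ky^{n-k}](x+y)^n$ satisfies the recursion \eqref{def:qbineq}; since that recursion characterizes $\eBinom{n}{k}{q}$ uniquely, this immediately yields \eqref{eqn:qbinomial}. First I would pin down the two expansions. Since $y$ is invertible and $1+y^{-1}x$ lies in $1$ plus the ideal of series of positive $x$-degree, the factorization $x+y=y(1+y^{-1}x)$ shows that $x+y$ is invertible in $\C[[x,y,y^{-1}]]$; symmetrically $x+y=x(1+x^{-1}y)$ is invertible in $\C[[x,x^{-1},y]]$. Hence $(x+y)^n$ is a well-defined element of each ring for every $n\in\Z$, and the identity $(x+y)^{n+1}=(x+y)^n(x+y)$ holds in each. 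Assigning degree $1$ to both $x$ and $y$ makes $(x+y)^n$ homogeneous of degree $n$, so in $\C[[x,y,y^{-1}]]$ it expands as $\sum_{k\geq0}c(n,k)x^ky^{n-k}$ and in $\C[[x,x^{-1},y]]$ as $\sum_{k\leq n}c(n,k)x^ky^{n-k}$, the ranges being forced by homogeneity together with the power-series direction of each ring. This justifies the coefficient extraction and records the vanishing of $c(n,k)$ outside these ranges, which will be the crux at the boundary.

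For the boundary conditions I would use quotient maps. Reducing $\C[[x,y,y^{-1}]]$ modulo the two-sided ideal of series of positive $x$-degree (it is two-sided because $yx^a=q^ax^ay$ preserves the $x$-degree) gives $\C[y,y^{-1}]$, under which $x+y\mapsto y$ and hence $(x+y)^n\mapsto y^n$; this forces $c(n,0)=1$. Symmetrically, reducing $\C[[x,x^{-1},y]]$ modulo the ideal of series of positive $y$-degree gives $\C[x,x^{-1}]$ with $x+y\mapsto x$, whence the $x^ny^0$-coefficient is $1$, i.e.\ $c(n,n)=1$ (for $n\geq0$ this also follows from the ordinary $q$-binomial theorem, the two expansions agreeing on polynomials).

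For the Pascal-type recursion I would multiply $(x+y)^n=\sum_k c(n,k)x^ky^{n-k}$ on the right by $x+y$, using $x^ky^{n-k}\cdot x=q^{n-k}x^{k+1}y^{n-k}$ and $x^ky^{n-k}\cdot y=x^ky^{n+1-k}$, and read off the coefficient of $x^ky^{n+1-k}$ to get $c(n+1,k)=c(n,k)+c(n,k-1)q^{n+1-k}$. Performing this in $\C[[x,y,y^{-1}]]$ proves the recursion for all $k\geq1$ (where all three indices are nonnegative, hence extracted from the correct ring), and performing it in $\C[[x,x^{-1},y]]$ proves it for all $k\leq-1$ (where all three indices are negative). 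The single remaining case $k=0$ mixes the two rings, since $c(n,k-1)=c(n,-1)$ must be read from $\C[[x,x^{-1},y]]$ while $c(n+1,0)$ and $c(n,0)$ are read from $\C[[x,y,y^{-1}]]$; here I would instead invoke the already-established values $c(n+1,0)=c(n,0)=1$ together with the vanishing $c(n,-1)=0$, which holds for every $n\neq-1$ (for $n\leq-2$ by the support bound $k\leq n$, and for $n\geq0$ because $(x+y)^n$ is then a polynomial whose two expansions coincide).

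The hard part—and precisely the reason the recursion excludes $(n+1,k)=(0,0)$—is this transition at $k=0$: when $n=-1$ the index $k-1=-1$ lands on the nonzero boundary value $c(-1,-1)=1$, so the naive Pascal relation would give $c(0,0)=c(-1,0)+c(-1,-1)=2$ rather than $1$, which is exactly where the two-ring bookkeeping breaks down and why that point must be omitted. With the three regimes $k\geq1$, $k\leq-1$, and $k=0$ (for $n\neq-1$) established, $c(n,k)$ satisfies \eqref{def:qbineq} and therefore equals $\eBinom{n}{k}{q}$.
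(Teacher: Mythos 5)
Your proof is correct, but it takes a genuinely different route from the paper's. The paper never proves Theorem \ref{thm:qbinomial} on its own (it is imported from \cite{FS}); the proof it actually contains is that of the weight-dependent generalization, Theorem \ref{thm:extbinomial}, which yields the present statement upon setting $w(s,t)=q$. That proof establishes the expansion in $\C_w[[x,y,y^{-1}]]$ by downward induction on $n$, using the explicit geometric-series evaluation of $(x+y)^{-1}$ (matched against the closed form of Example \ref{ex:minus1}) as the base case and cancelling a factor $x+y$ in the inductive step; the expansion in $\C_w[[x,x^{-1},y]]$ is then deduced by applying the involution \eqref{eqn:inv1} together with the separately proven reflection formula of Theorem \ref{thm:refl1}. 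You instead characterize the coefficients directly: you show that $c(n,k)=[x^ky^{n-k}](x+y)^n$ obeys \eqref{def:qbineq}, getting $c(n,0)=c(n,n)=1$ from quotient homomorphisms that kill the ideal of positive $x$- (resp.\ $y$-) degree, the Pascal relation for $k\geq 1$ and for $k\leq -1$ by right multiplication by $x+y$ inside the appropriate ring, and the crossover case $k=0$, $n\neq -1$ from the vanishing $c(n,-1)=0$; this correctly isolates why $(n+1,k)=(0,0)$ must be excluded (the ring-2 value $c(-1,-1)=1$ spoils the naive relation there). Your route buys independence from the involution machinery, the reflection formula, and any explicit evaluation of a row of the array, and it makes the excluded point conceptually transparent; its cost is that it leans on the uniqueness of the solution of \eqref{def:qbineq}, which you assert rather than prove --- the paper, following \cite{FS}, also treats this as given, though a fully self-contained version of your argument should include the short double induction over columns ($k\geq 1$ upward from the boundary $c(n,0)$, $k\leq -1$ downward by solving the recursion for $c(n,k-1)$) --- and it requires the delicate two-ring bookkeeping about which coefficients are read from which algebra, which you carry out correctly.
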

In their paper they gave a combinatorial interpretation of the $q$-binomial
coefficients \eqref{def:qbineq} in terms of hybrid sets. This interpretation
is a $q$-analogue of a model by Loeb for (ordinary) binomial coefficients. 

Theorem~\ref{thm:qbinomial} also extends the noncommutative $q$-binomial
theorem for nonnegative integers $n$ and $k$: Let $\C_q[x,y]$ be the
associative unital algebra over $\C$
generated by noncommutative variables $x$ and $y$, satisfying the relation
\begin{equation}\label{qcomm}
yx=qxy,
\end{equation}
for an indeterminate $q$. Then, the noncommutative $q$-binomial theorem says
that we have, as an identity in $\C_q[x,y]$,
\begin{align}
(x+y)^n=\sum_{k=0}^{n} \eBinom{n}{k}{q} x^k y^{n-k}, \label{thm:qbinomial2}
\end{align}
which is the $n,k \geq 0$ case of Theorem \ref{thm:qbinomial}.
In \cite{Schl1}, the second author generalized the noncommutative $q$-binomial
theorem \eqref{thm:qbinomial2} to a weight-dependent binomial theorem
for \defn{weight-dependent binomial coefficients} (see
Theorem~\ref{thm:binomial} below) and gave a combinatorial interpretation of
these coefficients in terms of lattice paths. Specializing the general weights
of the weight-dependent binomial coefficients, one obtains some interesting
special cases, such as elliptic binomial coefficients and symmetric functions
as well as the $q$- and ordinary binomial coefficients. In the conclusion of
their paper \cite{FS}, Formichella and Straub ask whether these elliptic
binomial coefficients also permit a natural extension to negative numbers.
We will show that such an extension is possible even for the more general
weight-dependent binomial coefficients.

In this paper, we extend the weight-dependent binomial coefficients to
negative integer values, analogous to the work of Loeb \cite{L} and
Formichella and Straub \cite{FS}. Remarkably, many of their results continue
to hold in the general weighted setting. We study reflection formulae inspired
by an involution which recently appeared in \cite{SY}. We give a combinatorial
interpretation of the weight-dependent binomial coefficients in terms of
lattice paths and prove a weight-dependent generalization of
Theorem~\ref{thm:qbinomial}. As a corollary of the binomial theorem, we obtain
convolution formulae analogous to the Chu--Vandermonde convolution formula,
which we also are able to interpret combinatorially. Our combinatorial
interpretation of the weight-dependent binomial coefficients in terms of
lattice paths translates, via a one-to-one correspondence, to a combinatorial
interpretation in terms of hybrid sets (as studied by Loeb \cite{L} in the
classical case and by Formichella and Straub \cite {FS} in the $q$-case)
which we make explicit. Finally, we study some important special cases of the
weight-dependent binomial coefficients, such as elementary and complete
homogeneous symmetric functions (with application of these cases to Stirling
numbers), and elliptic binomial coefficients. 

\section{Weight-dependent commutation relations}
\subsection{A noncommutative algebra}
Let $(w(s,t))_{s,t\in\mathbb{Z}}$ be a sequence of invertible variables.
We start by extending the algebra $\C_q[x,y]$ to the weight-dependent
setting with invertible variables $x$ and $y$.

\begin{definition}\label{def:Cwxy3}
For a doubly-indexed sequence of invertible variables
$(w(s,t))_{s,t\in\mathbb{Z}}$, let $\mathbb{C}_w [x,x^{-1},y,y^{-1}]$ be the
associative unital algebra over $\mathbb{C}$ generated by $x$, $x^{-1}$, $y$
and $y^{-1}$ and the sequence of invertible variables
$(w(s,t)^{\pm 1})_{s,t\in \Z}$ satisfying the following relations:
	\begin{subequations}\label{eqn:noncommrel3}
		\begin{align}
			x^{-1}x&=xx^{-1}=1\label{subeqn:rel1}\\
			y^{-1}y&=yy^{-1}=1\label{subeqn:rel2}\\
			yx &= w(1,1)xy,\label{subeqn:rel3}\\
			x w(s,t)&= w(s+1,t)x,\label{subeqn:rel4}\\
			y w(s,t)&= w(s,t+1)y\label{subeqn:rel5},
		\end{align}
	\end{subequations}
	for all $s,t\in\mathbb Z$. 
\end{definition}
\noindent Note that from the above relations also the following relations
can be obtained: 
	\begin{subequations}\label{eqn:noncommrelbonus}
	\begin{align}
		x^{-1}y&=w(0,1)yx^{-1},\tag{2.1f}\\
		xy^{-1}&=w(1,0)y^{-1}x,\tag{2.1g}\\
		y^{-1}x^{-1}&=w(0,0)x^{-1}y^{-1},\tag{2.1h} \\
		x^{-1} w(s,t)&= w(s-1,t)x^{-1},\tag{2.1i}\\
		y^{-1} w(s,t)&= w(s,t-1)y^{-1}\tag{2.1j}.
	\end{align}
\end{subequations}
In this paper, we will frequently restrict the just defined algebra
$\mathbb{C}_w [x,x^{-1},y,y^{-1}]$ to $\mathbb{C}_w [x,y,y^{-1}]$
(where we allow no negative powers of $x$ and omit the relation
\eqref{subeqn:rel1}) or to $\mathbb{C}_w [x,x^{-1},y]$ (where we allow
no negative powers of $y$ and omit the relation \eqref{subeqn:rel2}),
respectively. In addition we find it convenient to work in the extensions
of these algebras to the algebras of formal power series
$\mathbb{C}_w [[x,x^{-1},y,y^{-1}]]$, $\mathbb{C}_w [[x,y,y^{-1}]]$ and
$\mathbb{C}_w [[x,x^{-1},y]]$ (keeping the same relations). Note that
expressions of the form $(x+y)^n$ with $n<0$ do not have a unique power
series expansion in $\mathbb{C}_w [[x,x^{-1},y,y^{-1}]]$. For $n<0$ we have
to decide how to expand $(x+y)^n$, say as $(x+y)^n=((1+xy^{-1})y)^n$ as an
element in $\mathbb{C}_w [[x,y,y^{-1}]]$, or rather as
$(x+y)^n=(x(1+x^{-1}y))^n$ as an element in $\mathbb{C}_w [[x,x^{-1},y]]$,
respectively. By default, we shall always choose the algebra of formal power
series $\mathbb{C}_w [[x,y,y^{-1}]]$ (allowing no negative powers of $x$)
and only resort to $\mathbb{C}_w [[x,x^{-1},y]]$ (allowing no negative powers
of $y$) if we explicitly mention that.

For $l,m \in \Z \cup \{\pm \infty \}$ we define products of
(possibly noncommutative) invertible variables $A_j$ as follows:
\begin{equation}\label{prod}
	\prod_{j=l}^mA_j=\left\{
	\begin{array}{ll}
		A_lA_{l+1}\dots A_m&m>l-1\\
		1&m=l-1\\
		A_{l-1}^{-1}A_{l-2}^{-1}\dots A_{m+1}^{-1}&m<l-1
	\end{array}\right..
\end{equation}
Note that
\begin{equation}\label{invprod}
	\prod_{j=l}^mA_j=\prod_{j=m+1}^{l-1}A_{m+l-j}^{-1},
\end{equation}
for all $l,m\in\Z\cup\{\pm\infty\}$.

Especially for the reflection formulae it will be necessary to define
$\mathrm{sgn}(n)$ for $n\in \Z$, following \cite{FS,Spr}, as 
\begin{equation}\label{def:sgn}
	\mathrm{sgn}(n)=
	\begin{cases} 1 & n \geq 0 \\ -1 & n < 0. \end{cases}
\end{equation}

For $s,t\in \Z$ and the sequence of invertible weights $(w(s,t))_{s,t\in\Z}$
we define 
\begin{equation}\label{eq:W}
	W(s,t):= \prod_{j=1}^t w(s,j).
\end{equation}

Note that for $s,t\in\mathbb{Z}$, we have $w(s,t)=W(s,t)/W(s,t-1)$.
We refer to the $w(s,t)$ as \defn{small weights}, whereas to the
$W(s,t)$ as \defn{big weights}.

\begin{lemma}\label{lem:inv}
Let $(w(s,t))_{s,t\in\mathbb Z}$ be a doubly-indexed sequence of invertible
variables, and $x$ and $y$ two additional invertible variables, together
forming the associative algebra
$A_{x,y}=\mathbb{C}_{w_{x,y}} [x,x^{-1},y,y^{-1}]$ where $w_{x,y}(s,t)=w(s,t)$.
Then the following six homomorphisms are involutive algebra isomorphisms.
\begin{subequations}\label{eqn:inv}
\begin{align}
  &\phi_{y,x}:& A_{x,y}&\to A_{y,x}& \; \text{with} \;&& w_{y,x}(s,t)
  &=w(t,s)^{-1}, \label{eqn:inv1}	\\
  &\phi_{x^{-1},y}:& A_{x,y}&\to A_{x^{-1},y}& \; \text{with} \;
  && w_{x^{-1},y}(s,t) &=w(1-s,t)^{-1}, \label{eqn:inv2} \\
  &\phi_{x^{-1},x^{-1}y}:& A_{x,y}&\to A_{x^{-1},x^{-1}y}
  & \; \text{with} \;&& w_{x^{-1},x^{-1}y}(s,t)
  &=w(1-s-t,t)^{-1}, \label{eqn:inv3} \\	
  &\phi_{x,y^{-1}}:& A_{x,y}&\to A_{x,y^{-1}}& \; \text{with} \;
  && w_{x,y^{-1}}(s,t)&=w(s,1-t)^{-1}, \label{eqn:inv4} \\	
  &\phi_{x^{-1},y^{-1}}:& A_{x,y}&\to A_{x^{-1},y^{-1}}& \; \text{with} \;
  && w_{x^{-1},y^{-1}}(s,t)&=w(1-s,1-t), \label{eqn:inv5} \\	
  &\phi_{y^{-1}x,y^{-1}}:& A_{x,y}&\to A_{y^{-1}x,y^{-1}}& \; \text{with} \;
  && w_{y^{-1}x,y^{-1}}(s,t)&=w(s,1-s-t)^{-1}. \label{eqn:inv6}
		\end{align}
	\end{subequations}
\end{lemma}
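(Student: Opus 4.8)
The plan is to treat each of the six maps as a relabeling of the template algebra and to verify, in turn, that it respects the defining relations \eqref{subeqn:rel3}--\eqref{subeqn:rel5} and that applying the same recipe twice returns the identity; the latter yields both bijectivity and the involution property simultaneously. First I would record the action of each $\phi$ on generators: the subscript encodes the images of $x$ and $y$ (for instance $\phi_{x^{-1},x^{-1}y}$ sends $x\mapsto x^{-1}$ and $y\mapsto x^{-1}y$), while on weights every map acts index-preservingly, $w(s,t)\mapsto w_{\bullet}(s,t)$, with $w_{\bullet}$ the sequence displayed in \eqref{eqn:inv1}--\eqref{eqn:inv6}.

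To prove each $\phi$ is a homomorphism, I would check that the images of the three generating relations hold in the target. Since $\phi$ carries $x,y$ to the two distinguished generators of the target algebra and carries $w(s,t)$ to the identically-indexed target weight, the two shift relations \eqref{subeqn:rel4} and \eqref{subeqn:rel5} reduce directly to the corresponding shift relations of the target. The only substantive check is the commutation relation \eqref{subeqn:rel3}, i.e.\ one must verify $\phi(y)\phi(x)=\phi(w(1,1))\,\phi(x)\phi(y)$. For the four maps \eqref{eqn:inv1}, \eqref{eqn:inv2}, \eqref{eqn:inv4}, \eqref{eqn:inv5} this follows in one line from, respectively, \eqref{subeqn:rel3} and the derived relations (2.1f), (2.1g), (2.1h), because the required weight $w(1,1)$, $w(0,1)$, $w(1,0)$, $w(0,0)$ is exactly the one produced by those relations. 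For the two maps \eqref{eqn:inv3} and \eqref{eqn:inv6}, whose second generator is the nontrivial monomial $x^{-1}y$ respectively $y^{-1}x$, I would first move a generator past an inverse generator using (2.1f)/(2.1g) and then push the resulting weight past $x^{-1}$ (respectively $y^{-1}$) using (2.1i)/(2.1j); this is where the index shift $1-s-t$ materializes, and this bookkeeping is the main computational obstacle.

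Finally I would establish the involution property. Applying the same defining recipe to the target algebra produces a map back to $A_{x,y}$, and I would check that the composite fixes every generator. On $x$ and $y$ this is immediate (for example $x\mapsto x^{-1}\mapsto x$, and $x^{-1}y\mapsto y$ under the return map). On weights it amounts to observing that each affine index substitution is an involution --- $s\mapsto 1-s$, $(s,t)\mapsto(t,s)$, and $(s,t)\mapsto(1-s-t,t)$ are all involutive --- and that the accompanying inversions of the $w$'s cancel in pairs; the placement of the inverse (absent in \eqref{eqn:inv5}, present once in the remaining maps) is arranged precisely so that two applications recover $w(s,t)$. Since the composite is $\mathrm{id}$, each $\phi$ is bijective, hence an algebra isomorphism, and involutive, which completes the proof.
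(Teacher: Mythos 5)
Your proposal is correct and takes essentially the same approach as the paper: the paper's proof consists of the assertion that it is ``straightforward to check'' that the simultaneous replacement of $x$, $y$ and $w_{x,y}(s,t)$ by the displayed images again satisfies the relations \eqref{eqn:noncommrel3}, and you have simply carried out that verification (correctly identifying the roles of the derived relations (2.1f)--(2.1j) and the involutivity of the affine index maps). The only cosmetic slip is that in $\phi_{y^{-1}x,y^{-1}}$ the monomial $y^{-1}x$ is the image of $x$ (the first slot of the subscript), not of $y$, which does not affect the argument.
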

It is straightforward to check that the simultaneous replacement of
$w_{x,y}(s,t)$ ($s,t\in\mathbb Z$), $x$ and $y$ in
\eqref{eqn:inv1}--\eqref{eqn:inv6} by $w_{x',y'}(s,t)$, $x'$ and $y'$,
respectively, again satisfies the conditions in \eqref{eqn:noncommrel3}.
Note that the involutions \eqref{eqn:inv4}--\eqref{eqn:inv6} can be realized
as compositions of \eqref{eqn:inv1}--\eqref{eqn:inv3}. For example,
$\phi_{y^{-1}x,y^{-1}} = \phi_{y,x} \circ \phi_{x^{-1},x^{-1}y} \circ \phi_{y,x}$.
Further note that Lemma~\ref{lem:inv} also holds for the algebras
$\C_w[x,x^{-1},y]$ and $\C_w[x,y,y^{-1}]$ (when the respective homomorphisms
can be defined) as well as for their respective formal power series extensions. 

Lemma~\ref{lem:inv} is an extension of \cite[Lemma 2]{SY}. There, the
involution $\phi_{x^{-1},x^{-1}y}$ with corresponding weight function
$\widetilde{w}(s,t)\eqdef w_{x^{-1},x^{-1}y}(s,t)=w(1-s-t,t)^{-1}$ was used in
the algebra $\C_w[x,x^{-1},y]$ to construct weight-dependent Fibonacci
polynomials satisfying a noncommutative weight-dependent Euler--Cassini
identity.

It is clear that, given an identity in the variables $w(s,t)$
($s,t\in\mathbb Z$), $x$ and $y$, subject to the commutation relations
\eqref{def:Cwxy3}, a new valid identity can be obtained by applying the
isomorphism $\phi$ to each side of the identity where the variables still
satisfy the commutation relations \eqref{def:Cwxy3}. We will apply such
isomorphisms in the proofs of Lemma~\ref{lem:com} and
Theorem~\ref{thm:extbinomial}.

\begin{remark}
While the six isomorphisms in Lemma~\ref{lem:inv} are involutions, i.e.,
are of order $2$, it is also possible to specify isomorphisms of order $3$.
In particular, for the homomorphisms
\begin{subequations}\label{eqn:3inv}
\begin{align}
  &\phi_{y^{-1},xy^{-1}}:& A_{x,y}&\to A_{y^{-1},xy^{-1}}& \; \text{with} \;
  && w_{y^{-1},xy^{-1}}(s,t)&=w(t,2-s-t), \label{eqn:inv7} \\
  &\phi_{yx^{-1},x^{-1}}:& A_{x,y}&\to A_{yx^{-1},x^{-1}}& \; \text{with} \;
  && w_{yx^{-1},x^{-1}}(s,t)&=w(2-s-t,s), \label{eqn:inv8} \\
  &\phi_{y,x^{-1}y^{-1}}:& A_{x,y}&\to A_{y,x^{-1}y^{-1}}& \; \text{with} \;
  && w_{y,x^{-1}y^{-1}}(s,t)&=w(1-t,1+s-t), \label{eqn:inv9} \\
  &\phi_{x^{-1}y^{-1},x}:& A_{x,y}&\to A_{x^{-1}y^{-1},x}& \; \text{with} \;
  && w_{x^{-1}y^{-1},x}(s,t)&=w(t-s,1-s), \label{eqn:inv10}
\end{align}
\end{subequations}
we have $\phi_{y^{-1},xy^{-1}}^3=\phi_{yx^{-1},x^{-1}}^3=\phi_{y,x^{-1}y^{-1}}^3
=\phi_{x^{-1}y^{-1},x}^3=id_{A_{x,y}}$, moreover,
$\phi_{y^{-1},xy^{-1}}^{-1}=\phi_{yx^{-1},x^{-1}}$ and
$\phi_{y,x^{-1}y^{-1}}^{-1}=\phi_{x^{-1}y^{-1},x}$.
We will not make use of these isomorphisms of order $3$ in this paper
(nor do we provide combinatorial interpretations in terms of lattice paths
or hybrid sets here).
\end{remark}

The following rule for interchanging powers of $x$ and $y$ is an extension
to integer values of a corresponding lemma in \cite[Lemma 1]{Schl1}:
\begin{lemma}\label{lem:com}
	For all $k,\ell \in \Z$ we have
	$$
	y^k x^\ell=\left(\prod_{i=1}^\ell\prod_{j=1}^k w(i,j)\right)x^\ell y^k
	=\left(\prod_{i=1}^\ell W(i,k)\right)x^\ell y^k.
	$$
\end{lemma}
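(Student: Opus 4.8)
The plan is to prove the identity first for nonnegative $k$ and $\ell$ by induction, and then to deduce the remaining sign cases from the involutions of Lemma~\ref{lem:inv}, exactly as anticipated in the remark preceding the statement. For the nonnegative case I would start with the single-$y$ commutation rule $y x^\ell=\bigl(\prod_{i=1}^\ell w(i,1)\bigr)x^\ell y$ for $\ell\ge 0$, proved by induction on $\ell$: the case $\ell=0$ is trivial and $\ell=1$ is precisely \eqref{subeqn:rel3}, while in the inductive step I move the newly created factor $w(1,1)$ to the left of $x^\ell$ using \eqref{subeqn:rel4}, which by a subsidiary induction yields $x^\ell w(s,t)=w(s+\ell,t)x^\ell$. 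With this rule established, I would fix $\ell\ge 0$ and induct on $k\ge 0$: writing $y^{k+1}x^\ell=y\,(y^k x^\ell)$ and inserting the inductive hypothesis, I first push the single leading $y$ through the accumulated weight block $\prod_{i=1}^\ell\prod_{j=1}^k w(i,j)$ via \eqref{subeqn:rel5} (raising each second index by one) and then through $x^\ell$ by the single-$y$ rule. Because the weights commute among themselves, the two resulting blocks merge into $\prod_{i=1}^\ell\prod_{j=1}^{k+1}w(i,j)$, which closes the induction.

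To reach negative exponents I would invoke the involutive isomorphisms of Lemma~\ref{lem:inv}. Applying $\phi_{x^{-1},y}$ (so $x\mapsto x^{-1}$, $y\mapsto y$, $w(s,t)\mapsto w(1-s,t)^{-1}$) to the already-proven identity with $x^m$ for $m\ge 0$ transports it to an identity for $y^k x^{-m}$; rewriting the transformed weight product $\prod_{i=1}^m\prod_{j=1}^k w(1-i,j)^{-1}$ by means of the reversal rule \eqref{invprod} together with the product convention \eqref{prod} turns it into $\prod_{i=1}^{-m}\prod_{j=1}^k w(i,j)$, which is exactly the claimed right-hand side for $\ell=-m$. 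In the same fashion $\phi_{x,y^{-1}}$ handles negative $k$, and $\phi_{x^{-1},y^{-1}}$ (equivalently, the composition of the previous two) handles the case where both $k$ and $\ell$ are negative. Finally, the second displayed equality is immediate from the definition \eqref{eq:W}, since $\prod_{j=1}^k w(i,j)=W(i,k)$.

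The conceptual content here is modest; the genuine work lies in the index bookkeeping for the negative cases. The main obstacle will be verifying that the products of transformed weights produced by the involutions collapse, after an application of \eqref{invprod}, to precisely the double product $\prod_{i=1}^\ell\prod_{j=1}^k w(i,j)$ under the convention \eqref{prod}—in particular, tracking which indices get reflected and exactly where the inverses land. I would organize this by checking the reindexing one range at a time (first the $i$-range, then the $j$-range), and I would rely throughout on the commutativity of the weights to regroup the single products $\prod_i w(i,j)$ into the desired double product. This commutativity, alongside the defining relations \eqref{eqn:noncommrel3} and their consequences \eqref{eqn:noncommrelbonus}, is the only structural input the argument needs.
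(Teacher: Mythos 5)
Your proposal is correct and follows essentially the same route as the paper: the nonnegative case by induction (which the paper cites from \cite{Schl1} and omits), and the remaining sign cases by transporting that case through the involutions \eqref{eqn:inv2}, \eqref{eqn:inv4} and \eqref{eqn:inv5} of Lemma~\ref{lem:inv} together with the product conventions \eqref{prod} and \eqref{invprod}. The only difference is cosmetic: you spell out the inductive step and the reindexing bookkeeping that the paper leaves implicit.
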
 
\begin{proof}
The case $k,\ell \geq 0$ is already given in \cite{Schl1} and is easy to
prove by induction; we therefore omit the proof. 
For $k\geq 0$ and $\ell < 0$ we combine the involution \eqref{eqn:inv2}
with the $k,\ell \geq 0$ case and use identity \eqref{invprod} to obtain:
$$y^k x^\ell= y^k (x^{-1})^{-\ell}
= \left(\prod_{i=1}^{-\ell} \prod_{j=1}^k w(1-i,j)^{-1} \right)
(x^{-1})^{-\ell}y^k
= \left(\prod_{i=1}^{\ell} \prod_{j=1}^k w(i,j) \right)x^\ell y^k .$$
The remaining cases can be proved in the same manner by applying the
involutions \eqref{eqn:inv4} and \eqref{eqn:inv5} to the $k,\ell\geq 0$ case.
\end{proof}

\subsection{Weight-dependent binomial coefficients with integer values}
Now we are ready to define weight-dependent binomial coefficients for all
integer values. Let the \defn{weight-dependent binomial coefficients}
or \defn{$w$-binomial coefficients} be defined by 
\begin{subequations}\label{def:wbineq}
\begin{align}
&\wBinom{n}{0}{w}=\wBinom{n}{n}{w} =1
\qquad\text{for\/ $n\in \mathbb{Z}$ },\label{def:wbineq1}\\
\intertext{and for $n,k\in\mathbb{Z}$, provided that $(n+1,k)\neq(0,0)$,}
&\wBinom{n+1}{k}{w}=
\wBinom{n}{k}{w}
+\wBinom{n}{k-1}{w}
\,W(k,n+1-k). \label{def:wbineq2}
\end{align}
\end{subequations}
\begin{example}[$n=-1$]\label{ex:minus1}
Using induction separately for $k\geq 0$ and for $k<0$ we obtain that
the weight-dependent binomial coefficient for $n=-1$ evaluates to
$$
\wBinom{-1}{k}{w}=(-1)^{k} \mathrm{sgn}(k) \prod_{j=1}^{k} W(j,-j),
$$
where $\mathrm{sgn}(k)$ is defined in \eqref{def:sgn}.
\end{example}
For $n,k \geq 0$ in \eqref{def:wbineq}, the $w$-binomial coefficients
coincide with the weight-dependent binomial coefficients in \cite{Schl1},
which have a combinatorial interpretation in terms of
\emph{weighted lattice paths} (see also \cite{Schl0} for the elliptic case).
Here, for $n,m\geq 0$, a lattice path is a sequence of north and east steps
in the first quadrant of the $xy$-plane, starting at the origin $(0,0)$ and
ending at $(n,m)$.
We give weights to such paths by assigning the big weight $W(s,t)$
to each east step $(s-1,t)\rightarrow (s,t)$ and $1$ to each north step.
Then define the weight of a path $P$, $w(P)$, to be the product of the
weights of all its steps. An example is given in Figure~\ref{fig:path1}.

Given two points $A,B\in\naturals_0^2$, let $\mathcal{P}(A\rightarrow B)$
be the set of all lattice paths from $A$ to $B$, and define 
\begin{displaymath}
w(\mathcal{P}(A\rightarrow B)):= \sum_{P\in \mathcal{P}(A\rightarrow B)}w(P).
\end{displaymath}
Then we have 
\begin{equation}
w(\mathcal{P}((0,0)\rightarrow (k,n-k)))=\wBinom{n}{k}{w}\label{eqn:wbin}
\end{equation}
as both sides of the equation satisfy the same recursion
and initial conditions.

Interpreting the $x$-variable as an east step and the $y$-variable
as a north step, we get the following weight-dependent binomial theorem.
\begin{theorem}[\cite{Schl1}]\label{thm:binomial}
	Let $n\in\mathbb{N}_0$. Then, as an identity in $\mathbb{C}_w[x,y]$,
	\begin{equation}\label{eqn:binomial}
		(x+y)^n =\sum_{k=0}^n
		\wBinom{n}{k}{w} x^k y^{n-k}.
	\end{equation}
\end{theorem}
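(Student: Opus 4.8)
The plan is to proceed by induction on $n$, using the noncommutative multiplication in $\mathbb{C}_w[x,y]$ together with the commutation rule of Lemma~\ref{lem:com}. The base case $n=0$ is immediate, since $(x+y)^0=1=\wBinom{0}{0}{w}$. For the inductive step I would assume the claimed expansion for $(x+y)^n$, multiply on the right by $(x+y)$, and split the result as
\[
(x+y)^{n+1}=\sum_{k=0}^n\wBinom{n}{k}{w}\,x^ky^{n-k}x+\sum_{k=0}^n\wBinom{n}{k}{w}\,x^ky^{n+1-k}.
\]
The second sum already has the desired monomial shape and contributes the term $\wBinom{n}{k}{w}$ to the coefficient of $x^ky^{n+1-k}$, so the whole task reduces to rewriting the first sum in the standard monomial basis $x^ky^{n+1-k}$.

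For the first sum the idea is to move the trailing $x$ leftward past $y^{n-k}$. By Lemma~\ref{lem:com} with $\ell=1$ one has $y^{n-k}x=W(1,n-k)\,xy^{n-k}$, and then applying the relation \eqref{subeqn:rel4} a total of $k$ times pushes the weight $W(1,n-k)=\prod_{j=1}^{n-k}w(1,j)$ through $x^k$, turning it into $W(k+1,n-k)$; hence
\[
x^ky^{n-k}x=W(k+1,n-k)\,x^{k+1}y^{n-k}.
\]
After shifting the summation index by one (replacing $k+1$ by $k$), the first sum therefore contributes $\wBinom{n}{k-1}{w}\,W(k,n+1-k)$ to the coefficient of $x^ky^{n+1-k}$. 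Adding the two contributions and invoking the recursion \eqref{def:wbineq2} gives exactly $\wBinom{n+1}{k}{w}$ for the interior indices $1\le k\le n$.

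The step requiring the most care is the bookkeeping of the weights together with the two boundary terms. One must check that Lemma~\ref{lem:com} followed by pushing $W(1,n-k)$ through $x^k$ produces precisely the big weight $W(k,n+1-k)$ appearing in \eqref{def:wbineq2}; this is the point where an off-by-one in the indices of $w$, or a mismatch between the small-weight shift \eqref{subeqn:rel4} and the big-weight notation \eqref{eq:W}, would derail the argument. Finally, the extreme coefficients must be verified separately: for $k=0$ only the second sum contributes, giving $\wBinom{n}{0}{w}=1=\wBinom{n+1}{0}{w}$, while for $k=n+1$ only the reindexed first sum contributes, giving $\wBinom{n}{n}{w}\,W(n+1,0)=1$ since the empty product $W(n+1,0)$ equals $1$; this matches $\wBinom{n+1}{n+1}{w}=1$ and completes the induction.
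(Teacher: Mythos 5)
Your proof is correct: the induction, the use of Lemma~\ref{lem:com} (with $\ell=1$) plus relation \eqref{subeqn:rel4} to get $x^k y^{n-k}x = W(k+1,n-k)\,x^{k+1}y^{n-k}$, the reindexing, and the separate boundary checks at $k=0$ and $k=n+1$ all go through, and there is no circularity since Lemma~\ref{lem:com} is proved independently of the binomial theorem. The paper itself imports this statement from \cite{Schl1} without proof, but your argument is essentially the paper's own approach: the identical manipulation (multiplying $\sum_{k}\wBinomText{n}{k}{w}\,x^ky^{n-k}$ on the right by $(x+y)$, normal-ordering, and invoking \eqref{def:wbineq2}) appears, run in reverse, in the paper's inductive proof of the negative-$n$ case of Theorem~\ref{thm:extbinomial}.
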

Our goal is to extend this weight-dependent binomial theorem to arbitrary
integer exponents and therefore generalize Theorems \ref{thm:loeb_binom} and
\ref{thm:qbinomial}.

Before stating the general binomial theorem, let us extend the lattice path
model of \cite{Schl0} to all $n,k \in \Z$. Let a
\emph{weighted hybrid lattice path} be a sequence of steps in the $xy$-plane
starting at the origin $(0,0)$ and ending at $(n,m)$ with $n,m \in \Z$ using
the following steps:
\begin{enumerate}
\item If $n,m \geq 0$, we use north and east steps 
(
\begin{tikzpicture}[scale=0.3]
	\draw[-to] (0,0) -- (0,1); 
\end{tikzpicture} 
,
\begin{tikzpicture}[scale=0.3]
\draw[-to] (0,.5) -- (1,.5);
\draw[color=white,line width=0.01pt] (-0.1,0) -- (-0.1,1);
\end{tikzpicture} 
	)
	,
      \item if $n\geq 0$ and $m<0$,
        we use south steps and east-south step combinations 
(
\begin{tikzpicture}[scale=0.3]
\draw[-to] (0,0) -- (0,-1); 
\end{tikzpicture} 
,
\begin{tikzpicture}[scale=0.3]
\draw[-to] (0,0) -- (1,0) -- (1,-1); 
\end{tikzpicture} 
	)
and every path starts with a south step,
\item if $n<0$ and $m\geq 0$,
  we use north-west step combinations and west steps 
(
\begin{tikzpicture}[scale=0.3]
\draw[-to] (0,0) -- (0,1) -- (-1,1); 
\end{tikzpicture} 
,
\begin{tikzpicture}[scale=0.3]
\draw[white] (-0.5,0) -- (-0.5,1);
\draw[-to] (0,0.5) -- (-1,0.5); 
\end{tikzpicture}
)
and every path starts with a west step,
\item if $n,m<0$, there are no allowed steps.
\end{enumerate}
Figure~\ref{fig:steps2} shows the possible steps of a hybrid lattice path.
The arrows indicate the direction of the steps. Note that in the region
$m<0\leq n$ every east step has to be followed by a south step and in the
region $n<0\leq m$ every north step has to be followed by a west step
(which is not evident in the figure but follows from the step combinations
in the definition). The figure also shows that some points are not reachable
using these steps. This corresponds to the fact that the $w$-binomial
coefficient with integer values is $0$ in some regions which we will specify
in Equation~\eqref{eqn:zeroregions}.
\begin{center}
	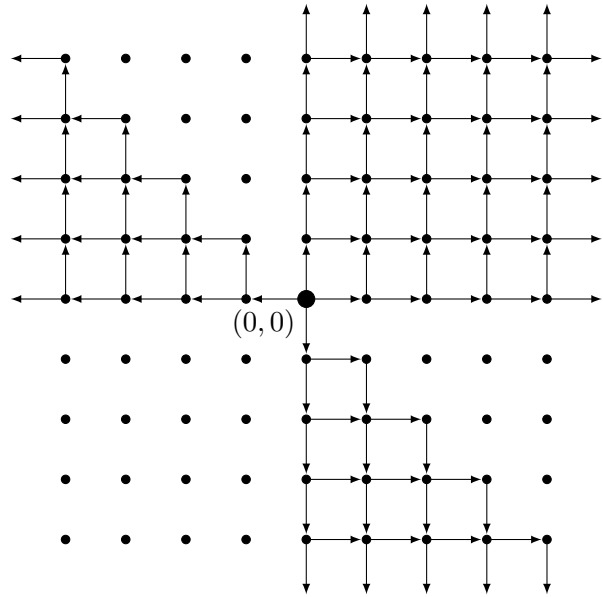
\begin{figure}
		\begin{tikzpicture}[scale=0.8]
			\foreach \x in {-4,...,4}
			\foreach \y in {-4,...,4}
			\draw[fill=black](\x,\y)circle(2pt);
			\foreach \x in {0,...,4}
			\foreach \y in {0,...,4}
			{\draw[->,>=latex,shorten >=2pt](\x,\y)--(\x+1,\y);
				\draw[->,>=latex,shorten >=2pt](\x,\y)--(\x,\y+1);}
			\foreach \x in {0,...,4}
			\foreach \y in {\x,...,4}
			\draw[->,>=latex,shorten >=2pt](\x,-\y)--(\x,-\y-1);
			\foreach \x in {0,...,3}
			\foreach \y in {\x,...,3}
			\draw[->,>=latex,shorten >=2pt](\x,-\y-1)--(\x+1,-\y-1);
			\foreach \x in {0,...,3}
			\foreach \y in {0,...,\x}
			\draw[->,>=latex,shorten >=2pt](-\x-1,\y)--(-\x-1,\y+1);
			\foreach \x in {0,...,4}
			\foreach \y in {0,...,\x}
				\draw[->,>=latex,shorten >=2pt](-\x,\y)--(-\x-1,\y);
			\draw[fill=black](0,0)circle(4pt);
			\node[below left] at (0,0) {$(0,0)$};
\end{tikzpicture}
		\caption{The possible steps of a hybrid lattice path.}
		\label{fig:steps2}
	\end{figure}
\end{center}
\noindent We give weights to such paths by assigning the weights
\begin{itemize}
	\item $1$ to each regular north step and south step,
	\item $(-1)$ to each north and south step in a north-west
          or an east-south step combination,
	\item $W(s,t)$ to each east step $(s-1,t) \rightarrow (s,t)$, and
	\item $W(s,t)^{-1}$ to each west step $(s-1,t) \leftarrow (s,t)$.
\end{itemize}
To illustrate it graphically, we assign the weights
\begin{center}
	\begin{tabular}{cccc}
		\begin{tikzpicture}
			\draw[fill=black](0,0)circle(2pt) node[below] {\scriptsize $(s,t-1)$};
			\draw[fill=black](0,1)circle(2pt) node[above] {\scriptsize $(s,t)$};
			\draw[->,>=triangle 45, shorten >=2pt] (0,0)--(0,1) ;
			\node[right] at (0,0.5) {$1$};
		\end{tikzpicture}&
		\begin{tikzpicture}
			\draw[fill=black](0,0)circle(2pt) node[below] {\scriptsize $(s,t)$};
			\draw[fill=black](0,1)circle(2pt) node[above] {\scriptsize $(s,t+1)$};
			\draw[->,>=triangle 45, shorten >=2pt] (0,1)--(0,0) ;
			\node[right] at (0,0.5) {$1$};
		\end{tikzpicture}&
		\begin{tikzpicture}	
			\node[color=white] at (0.5,-0.3) {.};
			\draw[fill=black](0,0.5)circle(2pt) node[below] {\scriptsize $(s-1,t)$};
			\draw[fill=black](1,0.5)circle(2pt) node[below] {\scriptsize $(s,t)$};
			\draw[->,>=triangle 45, shorten >=2pt] (0,0.5)--(1,0.5) ;
			\node[above] at (0.5,0.5) {$W(s,t)$};
		\end{tikzpicture}&
		\begin{tikzpicture}	
			\node[color=white] at (0.5,-0.3) {.};
			\draw[fill=black](0,0.5)circle(2pt) node[below] {\scriptsize $(s-1,t)$};
			\draw[fill=black](1,0.5)circle(2pt) node[below] {\scriptsize $(s,t)$};
			\draw[->,>=triangle 45,shorten >=2pt] (1,0.5)--(0,0.5) ;
			\node[above] at (0.5,0.5) {$W(s,t)^{-1}$};
		\end{tikzpicture}
	\end{tabular}
\end{center}
and, if we highlight the step combinations by rounded corners, 
\begin{center}
	\begin{tabular}{cc}	
		\begin{tikzpicture}	
			\draw[fill=black](0,0)circle(2pt);
			\draw[fill=black](1,0)circle(2pt) node[right] {\scriptsize $(s,t-1)$};
			\draw[fill=black](0,1)circle(2pt) node[left] {\scriptsize $(s-1,t)$};
			\draw[fill=black](1,1)circle(2pt) node[right] {\scriptsize $(s,t)$};
			\draw[->,>=triangle 45,rounded corners=9pt,shorten >=2pt] (0,1)--(1,1)--(1,0) ;
			\node[above] at (0.5,1) {$W(s,t)$};
			\node[right] at (1,0.5) {$(-1)$};
		\end{tikzpicture}&
		\begin{tikzpicture}
			\draw[fill=black](0,0)circle(2pt);
			\draw[fill=black](1,0)circle(2pt) node[right] {\scriptsize $(s,t-1)$};
			\draw[fill=black](0,1)circle(2pt) node[left] {\scriptsize $(s-1,t)$};
			\draw[fill=black](1,1)circle(2pt) node[right] {\scriptsize $(s,t)$};
			\draw[->,>=triangle 45,rounded corners=9pt,shorten >=2pt] (1,0)--(1,1)--(0,1) ;
			\node[above] at (0.5,1) {$W(s,t)^{-1}$};
			\node[right] at (1,0.5) {$(-1)$};
		\end{tikzpicture}
	.
	\end{tabular}
\end{center}

\noindent
The weight of a path $P$, $w(P)$, is again defined as the product of the
weights of all its steps. 
\begin{example}
Figure~\ref{fig:path1} shows a hybrid lattice path in the area $n,m \geq 0$
with weight $W(1,0) \cdot 1 \cdot W(2,1) \cdot 1 \cdot W(3,2)\cdot W(4,2)
= w(2,1)w(3,1)w(3,2)w(4,1)w(4,2)$. Paths in this area correspond to
(ordinary) weighted lattice paths.
\begin{figure}
		\begin{tikzpicture}[scale=1.4]
	\path[fill=gray!20] (1,0) rectangle (2,1) node[pos=.5, color=black] {\scriptsize $w(2,1)$};
	\path[fill=gray!20] (2,0) rectangle (3,1) node[pos=.5, color=black] {\scriptsize $w(3,1)$};
	\path[fill=gray!20] (3,0) rectangle (4,1) node[pos=.5, color=black] {\scriptsize $w(4,1)$};
	\path[fill=gray!20] (2,1) rectangle (3,2) node[pos=.5, color=black] {\scriptsize $w(3,2)$};
	\path[fill=gray!20] (3,1) rectangle (4,2) node[pos=.5, color=black] {\scriptsize $w(4,2)$};
	\draw[->,>=latex] (0,-0.5) -- (0,2.5);
	\draw[->,>=latex] (-0.5,0) -- (4.5,0);
	\foreach \x in {0,...,4}
	\draw[dotted] (\x,0)--(\x,2);
	\foreach \y in {0,...,2}
	\draw[dotted] (0,\y)--(4,\y);
	\draw[line width=2pt,->,>=latex, shorten >=2pt,myblue] (0,0) -- (1,0) -- (1,1)--(2,1) -- (2,2)--(3,2)--(4,2);
	\foreach \x in {0,...,4}
	\foreach \y in {0,...,2}
	\draw[fill=black](\x,\y)circle(2pt);
	\node[above,myblue] at (0.5,0) {\tiny $W(1,0)$};	
	\node[above,myblue] at (1.5,1) {\tiny $W(2,1)$};	
	\node[above,myblue] at (2.5,2) {\tiny $W(3,2)$};	
	\node[above,myblue] at (3.5,2) {\tiny $W(4,2)$};		
	\node[left,myblue] at (1,0.6) {\scriptsize $1$};
	\node[left,myblue] at (2,1.6) {\scriptsize $1$};	
	\node[above left] at (0,0) {$(0,0)$};
\end{tikzpicture}
		\caption{A hybrid lattice path in the area $n,m \geq 0$.}
		\label{fig:path1}
\end{figure}
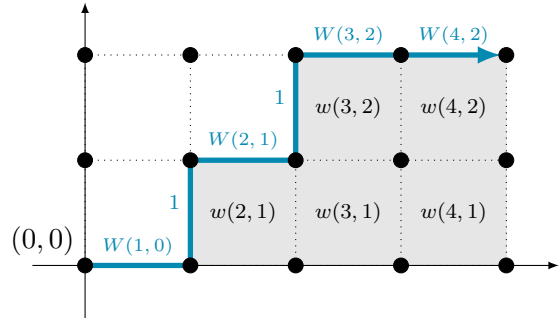
The left side of Figure~\ref{fig:path2} shows a hybrid lattice path in the
area $m<0\leq n$ with weight
$1\cdot W(1,-1) \cdot (-1) \cdot 1 \cdot W(2,-3) \cdot (-1)
= (-1)^2 w(1,0)^{-1} w(2,0)^{-1} w(2,-1)^{-1} w(2,-2)^{-1}$. The right side
of Figure~\ref{fig:path2} shows a hybrid lattice path in the area
$n<0\leq m$ with weight $W(0,0)^{-1} \cdot (-1) \cdot W(-1,1)^{-1}
\cdot W(-2,1)^{-1} \cdot (-1) \cdot W(-3,2)^{-1}
= (-1)^{2} w(-1,1)^{-1}w(-2,1)^{-1}w(-3,1)^{-1}w(-3,2)^{-1}$. 
\begin{center}
\begin{figure}
\begin{subfigure}{0.4\textwidth}
			\begin{tikzpicture}[scale=1.5]
				\node[color=white] at (-1,0) {$.$};
				\node[color=white] at (3,0) {$.$}; 
				\path[fill=gray!20] (0,0) rectangle (1,-1) node[pos=.5, color=black] {\scriptsize $w(1,0)^{-1}$};
				\path[fill=gray!20] (1,0) rectangle (2,-1) node[pos=.5, color=black] {\scriptsize $w(2,0)^{-1}$};
				\path[fill=gray!20] (1,-1) rectangle (2,-2) node[pos=.5, color=black] {\scriptsize $w(2,-1)^{-1}$};
				\path[fill=gray!20] (1,-2) rectangle (2,-3) node[pos=.5, color=black] {\scriptsize $w(2,-2)^{-1}$};
				\draw[->,>=latex] (0,-4.3) -- (0,0.3);
				\draw[->,>=latex] (-0.3,0) -- (2.3,0);
				\foreach \x in {0,...,2}
					\draw[dotted] (\x,0)--(\x,-4);
				\foreach \y in {-4,...,0}
					\draw[dotted] (0,\y)--(2,\y);
				\draw[line width=2pt, myblue] (0,0)--(0,-1);
				\draw[line width=2pt,rounded corners=9pt, myblue] (0,-1) -- (1,-1)--(1,-3);
				\draw[line width=2pt,->,>=latex, shorten >=2pt,rounded corners=9pt,myblue] (1,-3) -- (2,-3) --(2,-4);
				\foreach \x in {0,...,2}
					\foreach \y in {-4,...,0}
						\draw[fill=black](\x,\y)circle(2pt);
				\node[below,myblue] at (0.5,-1) {\scriptsize $W\!(1,-1)$};	
				\node[below,myblue] at (1.5,-3) {\scriptsize $W\!(2,-3)$};	
				\node[left,myblue] at (0,-0.5) {\scriptsize $1$};
				\node[left,myblue] at (1,-2.5) {\scriptsize $1$};	
				\node[left,myblue] at (1,-1.6) {\scriptsize $(-1)$};
				\node[left,myblue] at (2,-3.6) {\scriptsize $(-1)$};
				\node[above left] at (0,0) {$(0,0)$};
			\end{tikzpicture}
		\end{subfigure}
		\begin{subfigure}{0.5\textwidth}
			\begin{tikzpicture}[scale=1.5]
				\draw[->,>=latex] (0,-0.3) -- (0,2.3);
				\draw[->,>=latex] (-4.3,0) -- (0.3,0);
				\path[fill=gray!20] (-2,0) rectangle (-1,1) node[pos=.5, color=black] {\scriptsize $w(-1,1)^{-1}$};
				\path[fill=gray!20] (-3,0) rectangle (-2,1) node[pos=.5, color=black] {\scriptsize $w(-2,1)^{-1}$};
				\path[fill=gray!20] (-4,0) rectangle (-3,1) node[pos=.5, color=black] {\scriptsize $w(-3,1)^{-1}$};
				\path[fill=gray!20] (-4,1) rectangle (-3,2) node[pos=.5, color=black] {\scriptsize $w(-3,2)^{-1}$};
				\foreach \x in {-4,...,0}
					\draw[dotted] (\x,0)--(\x,2);
				\foreach \y in {0,...,2}
					\draw[dotted] (0,\y)--(-4,\y);
				\draw[line width=2pt,myblue] (0,0)--(-1,0);
				\draw[line width=2pt, rounded corners=9pt, myblue] (-1,0) -- (-1,1) -- (-3,1);
				\draw[line width=2pt,->,>=latex, shorten >=2pt,rounded corners=9pt,myblue] (-3,1) -- (-3,2) -- (-4,2);
				\foreach \x in {-4,...,0}
					\foreach \y in {0,...,2}
						\draw[fill=black](\x,\y)circle(2pt);				
				\node[above,myblue] at (-0.5,0) {\scriptsize $W\!(0,0)^{-1}$};	
				\node[above,myblue] at (-1.5,1) {\scriptsize $W\!(-1,1)^{-1}$};
				\node[above,myblue] at (-2.5,1) {\scriptsize $W\!(-2,1)^{-1}$};
				\node[above,myblue] at (-3.5,2) {\scriptsize $W\!(-3,2)^{-1}$};
				\node[right,myblue] at (-1,0.6) {\scriptsize $(-1)$};
				\node[right,myblue] at (-3,1.6) {\scriptsize $(-1)$};
				\node[below right] at (0,0) {$(0,0)$};	
			\end{tikzpicture}
		\end{subfigure}
 \caption{A hybrid lattice path in the area $m<0\leq n$ (left) and a path
   in the area $n<0\leq m$ (right). The (diagonal) step combinations
   are indicated by rounded corners. The negative inner corners
   (see Remark \ref{rem:area}) are marked with red diagonal lines.}
	\label{fig:path2}
\end{figure}
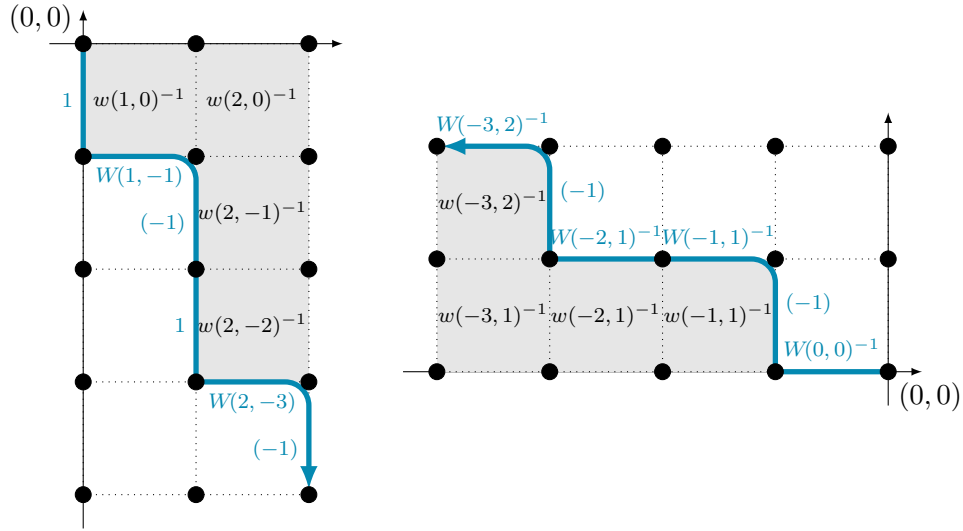
\end{center}
\end{example}
Given two points $A,B\in\Z^2$,
let $\mathcal{P}_{\Z}(A\rightarrow B)$ be the set of all hybrid lattice
paths from $A$ to $B$, and define 
\begin{displaymath}
  w(\mathcal{P}_{\Z}(A\rightarrow B)):=
  \sum_{P\in \mathcal{P}_{\Z}(A\rightarrow B)}w(P).
\end{displaymath}

\begin{theorem}\label{thm:hlp}
	Let $n,k\in\Z$. Then,
\begin{equation}
  w(\mathcal{P_{\Z}}((0,0)\rightarrow (k,n-k)))
  = \wBinom{n}{k}{w}\label{eqn:wbinext}.
\end{equation}
\end{theorem}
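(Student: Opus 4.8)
The plan is to show that the path generating function
$P(n,k):=w(\mathcal{P}_\Z((0,0)\rightarrow(k,n-k)))$ satisfies the defining recursion \eqref{def:wbineq} of the $w$-binomial coefficients. Since those coefficients are uniquely pinned down by \eqref{def:wbineq1}--\eqref{def:wbineq2} (one propagates the two-term recursion outward from the initial values $\wBinom{n}{0}{w}=\wBinom{n}{n}{w}=1$ into all four integer quadrants, exactly as in the classical setting of Loeb and of Formichella--Straub), this identification of the recursion yields $P(n,k)=\wBinom{n}{k}{w}$ and hence \eqref{eqn:wbinext}.

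First I would dispose of the degenerate endpoints. The target $(k,n-k)$ of $P(n,k)$ is unreachable by the allowed steps precisely on the ranges where $\wBinom{n}{k}{w}$ vanishes (cf.\ \eqref{eqn:zeroregions}): region $4$ is reachable from nowhere, and an easy counting of the net displacements of the admissible moves shows that a region-$2$ point $(k,n-k)$ with $k\geq 0$ is reached only when $n\leq -1$, with the symmetric statement in region $3$; on all these ranges the path set is empty, so $P(n,k)=0$. For the initial conditions, the endpoint $(0,n)$ of $P(n,0)$ forces a single vertical path of $|n|$ steps (north if $n\geq 0$, south if $n<0$), each of weight $1$, so $P(n,0)=1$; likewise the endpoint $(n,0)$ of $P(n,n)$ forces a single horizontal path whose step weights are all $W(s,0)=\prod_{j=1}^{0}w(s,j)=1$, so $P(n,n)=1$, matching \eqref{def:wbineq1}.

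The heart of the proof is verifying \eqref{def:wbineq2} by a last-step decomposition. The three coefficients involved correspond to the endpoints $T=(k,n+1-k)$, $B=(k,n-k)$ (directly below $T$) and $L=(k-1,n+1-k)$ (directly left of $T$), with $P(T)=P(n+1,k)$, $P(B)=P(n,k)$ and $P(L)=P(n,k-1)$. In the quadrant where $T$ lies with $k\geq 0$ and $n+1-k\geq 0$ I would split each path to $T$ by its last step, north from $B$ (weight $1$) or east from $L$ (weight $W(k,n+1-k)$), which gives $P(T)=P(B)+W(k,n+1-k)\,P(L)$ immediately. In region $2$ I would instead decompose the paths to $B$: the last move is either a plain south step from $T$ (weight $1$) or an east--south combination from $L$ through $T$ (weight $W(k,n+1-k)\cdot(-1)$), so $P(B)=P(T)-W(k,n+1-k)\,P(L)$, which rearranges to \eqref{def:wbineq2}. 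Symmetrically, in region $3$ I would decompose the paths to $L$ by a last west step from $T$ (weight $W(k,n+1-k)^{-1}$) versus a last north--west combination from $B$ through $T$ (weight $(-1)\,W(k,n+1-k)^{-1}$), giving $P(L)=W(k,n+1-k)^{-1}\bigl(P(T)-P(B)\bigr)$, again equivalent to \eqref{def:wbineq2}. The signs $(-1)$ carried by the steps inside the combinations and the inverse weights $W^{-1}$ are exactly what convert all three decompositions into the same two-term recursion.

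The delicate point, and the step I expect to be the main obstacle, is the behaviour at the region boundaries (the coordinate axes) together with the convention that every path in region $2$ (resp.\ region $3$) must begin with a south (resp.\ west) step. When $T$, $B$ or $L$ sits on an axis, one of the two incoming moves in the relevant decomposition is unavailable; I would check case by case that the ``missing'' neighbour is then precisely an unreachable (zero-region) endpoint, so that the absent term contributes $0$ and the two-term identity \eqref{def:wbineq2} still holds. The single genuine exception is $(n+1,k)=(0,0)$: there the combination term that the recursion demands would have to be supplied by an east--south combination beginning at the origin with an east step, which the first-step rule forbids, so the two-term identity fails at that one point — which is exactly why $(n+1,k)\neq(0,0)$ is imposed in \eqref{def:wbineq2}. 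Once the recursion and the initial conditions are confirmed in this way, the uniqueness of the characterization completes the proof.
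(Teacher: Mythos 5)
Your proof is correct and takes essentially the same route as the paper's: both dispose of the zero regions and initial conditions and then verify the recursion \eqref{def:wbineq2} by a last-step decomposition of the paths, using precisely the same rearranged forms — $P(B)=P(T)-W(k,n+1-k)\,P(L)$ via south steps and east--south combinations in the region $m<0\leq n$, and $P(L)=W(k,n+1-k)^{-1}\bigl(P(T)-P(B)\bigr)$ via west steps and north--west combinations in the region $n<0\leq m$ — that the paper states as its second and third forms of the recurrence. Your explicit treatment of the axis/boundary cases and of the excluded point $(n+1,k)=(0,0)$ spells out details the paper passes over silently, but the argument is the same.
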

\begin{proof}
If $0 \leq k \leq n$ we are left with Equation \eqref{eqn:wbin}. Let
$0 \leq n < k$, $k<0\leq n$ or $n<k<0$, then there are no possible hybrid
lattice paths and indeed, it is not hard to use induction to prove that 
\begin{align}\label{eqn:zeroregions}
	&\wBinom{n}{k}{w}=0
	\qquad\text{for\/ $0 \leq n < k$, $k<0\leq n$ or $n<k<0$}.
	\end{align}
The remaining cases $n<0\leq k$ and $k\leq n <0$ can be proved using induction.

For $n<0\leq k$ we are dealing with the set of lattice paths from $(0,0)$
to $(k,n-k)$ using south steps and east-south step combinations. We begin
with the initial conditions
\begin{equation*}
  \wBinom{n}{0}{w}=1  \;\text{ and }\; \wBinom{-1}{k}{w}
                       =(-1)^{k} \prod_{j=1}^{k} W(j,-j)
\end{equation*} 
for $n<0$ and $k\geq0$ and indeed, there is only one path from $(0,0)$
to $(0,n)$ consisting only of south steps with weight $1$ and one path
from $(0,0)$ to $(k,-1-k)$ consisting only of east-south step combinations
with weight $(-1)^{k} \prod_{j=1}^{k} W(j,-j)$.

Now assume the result holds when $n+1\leq-1$ or $k-1\geq0$. The last step
of the hybrid lattice path ending at $(k,n-k)$ is either a south step
starting at $(k,n-k+1)$ or an east-south step combination starting at
$(k-1,n-k+1)$. We obtain from \eqref{def:wbineq2} that
\begin{equation}
\wBinom{n}{k}{w} = \wBinom{n+1}{k}{w} - \wBinom{n}{k-1}{w} W(k,n+1-k).
\end{equation}
By induction, the first term of the right hand side is the weighted counting
of hybrid lattice paths ending at $(k,n-k+1)$ combined with a south step with
weight $1$. The second term is the weighted counting of paths ending at
$(k-1,n-k+1)$ combined with an east-south step combination with weight
$(-1)\cdot W(k,n-k+1)$. This completes the proof for the case $n<0\leq k$.

The case $k\leq n <0$ can be proved analogously using the following third
form of the recurrence relation \eqref{def:wbineq2}:
\begin{equation}
  \wBinom{n}{k}{w} = \wBinom{n+1}{k+1}{w} W(k+1,n-k)^{-1}
  - \wBinom{n}{k+1}{w} W(k+1,n-k)^{-1},
\end{equation}
where the first term of the sum corresponds to the weighted counting of
hybrid lattice paths ending at $(k+1,n-k)$ combined with a west step with
weight $W(k+1,n-k)^{-1}$ and the second term to the weighted counting of
paths ending at $(k+1,n-k-1)$ combined with a north-west step combination
with weight $(-1)\cdot W(k+1,n-k)^{-1}$. 
\end{proof}

\begin{remark}\label{rem:area}
  As the Figures~\ref{fig:path1}~and~\ref{fig:path2} show, we can interpret
  the weight of the path as a weight function corresponding to the area
  between the path and the $x$-axis. For $s,t \in \Z$, let the cell $(s,t)$
  be the unit square with north-east corner at $x=s$ and $y=t$. For a hybrid
  lattice path $P$ we define the set $\mathcal{C}(P)$ as the collection of
  cells $(s,t)$ between the path and the $x$-axis. Further, we say that
  $(s,t)$ is an \emph{inner corner} of $\mathcal{C}(P)$, if the path touches
  the cell from two sides (the path in Figure~\ref{fig:path1} has the two
  inner corners $(2,1)$ and $(3,2)$) and it is a \emph{negative inner corner}
  if $s\leq 0$ or $t\leq 0$ (in Figure~\ref{fig:path2} there are the negative
  inner corners $(1,0)$ and $(2,-2)$ in the left path and $(-1,1)$ and
  $(-3,2)$ in the right path). For a cell $(s,t)$ we define $N(s,t)$ to be
  $1$ if it is a negative inner corner and $0$ otherwise.
Then, the definition of the weight of a path is equivalent to 
\begin{equation}\label{eqn:area}
  w(P)=\prod_{(i,j) \in \mathcal{C}(P)} (-1)^{N(i,j)} w(i,j)^{\mathrm{sgn}((i,j))}
\end{equation}
where $\mathrm{sgn}((i,j))\eqdef \mathrm{sgn}(i-1)\mathrm{sgn}(j-1)$.
Therefore, $w(\mathcal{P}_{\Z}((0,0)\rightarrow (k,n-k)))$ can be seen as
an area generating function. If a hybrid lattice path ends at $(k,n-k)$,
the exponents of the small weights in \eqref{eqn:area} are always $1$ if
$0\leq k \leq n$ and $-1$ otherwise. Moreover, the product over all
$(-1)^{N(i,j)}$ is $(-1)^k$ if $n<0\leq k$, it is $(-1)^{n-k}$ if
$k\leq n < 0$, and $1$ otherwise.
\end{remark}
\begin{remark}
  A recent preprint by O'Sullivan~\cite[Figure~1]{Sul} contains a figure
  similar to our Figure~\ref{fig:steps2}, showing the values of
  the \defn{harmonic multiset numbers}
  $||\begin{smallmatrix}n\\k\end{smallmatrix}||$
  (that extend the Stirling numbers) for integers $n$ and $k$.
\end{remark}  
\begin{example}
  By setting $w(s,t)=1$ for all $s,t \in \Z$, the generating function
  of hybrid lattice paths is given by the binomial coefficients with integer
  values defined by \eqref{def:bineq}. By setting $w(s,t)=q$ for all
  $s,t \in \Z$, we obtain a $q$-weighting of hybrid lattice paths with
  the generating function given by the $q$-binomial coefficients with
  integer values defined by \eqref{def:qbineq}.
\end{example}

\subsection{Reflection formulae}
As pointed out in \cite{Schl1}, the weight-dependent binomial coefficients
do not satisfy the symmetry $\wBinomText{n}{k}{w}=\wBinomText{n}{n-k}{w}$.
Nevertheless, we can prove the following reflection formula using the weight 
\begin{align}
\widehat{w}(s,t)\eqdef w_{y,x}(s,t)=w(t,s)^{-1} \label{def:hatweight}
\end{align}
from the involution \eqref{eqn:inv1} and 
\begin{align}
  \widehat{W}(s,t)\eqdef \prod_{j=1}^{t} \widehat{w}(s,j).
  \label{def:bighatweight}
\end{align}
\begin{theorem}\label{thm:refl1}
Let $n,k \in \Z$ and $\widehat{w}(s,t)=w(t,s)^{-1}$. Then,
$$\wBinom{n}{k}{w}=\wBinom{n}{n-k}{{\widehat{w}}} \prod_{j=1}^{k} W(j,n-k).$$
\end{theorem}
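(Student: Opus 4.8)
The plan is to show that both sides of the asserted identity satisfy the same defining recursion \eqref{def:wbineq} and the same initial conditions. Since $\wBinom{\cdot}{\cdot}{w}$ is by definition the unique solution of \eqref{def:wbineq} (the recursion determines every value by the same inductions over regions already used for Theorem~\ref{thm:hlp}), it suffices to check that the right-hand side solves \eqref{def:wbineq}. Write
\[
R(n,k)\eqdef\wBinom{n}{n-k}{{\widehat{w}}}\prod_{j=1}^{k}W(j,n-k),\qquad
L(n,k)\eqdef\wBinom{n}{k}{w}.
\]
First I would dispatch the initial conditions. At $k=0$ the product is empty, so $R(n,0)=\wBinom{n}{n}{{\widehat{w}}}=1=L(n,0)$; at $k=n$ we get $R(n,n)=\wBinom{n}{0}{{\widehat{w}}}\prod_{j=1}^{n}W(j,0)$, and since $W(j,0)=1$ is an empty small-weight product for every $j$, this collapses to $1=L(n,n)$. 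Both hold for all $n\in\Z$, with empty and reversed product ranges always read through the convention \eqref{prod}.

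The core of the argument is to verify $R(n+1,k)=R(n,k)+R(n,k-1)\,W(k,n+1-k)$ for $(n+1,k)\neq(0,0)$. Applying the recursion \eqref{def:wbineq2} for the hat-weight coefficients with top index $n+1$ and bottom index $n+1-k$ gives
\[
\wBinom{n+1}{n+1-k}{{\widehat{w}}}
=\wBinom{n}{n-(k-1)}{{\widehat{w}}}
+\wBinom{n}{n-k}{{\widehat{w}}}\,\widehat{W}(n+1-k,k),
\]
whose admissibility condition $(n+1,n+1-k)\neq(0,0)$ is exactly the excluded point $(n,k)=(-1,0)$ of the recursion for $L$, so the domains match. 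Multiplying through by $\prod_{j=1}^{k}W(j,n+1-k)$ and using $n+1-k=n-k+1$, the term carrying $\wBinom{n}{n-(k-1)}{{\widehat{w}}}$ reproduces $R(n,k-1)\,W(k,n+1-k)$ after recombining the $j=k$ factor, while the term carrying $\wBinom{n}{n-k}{{\widehat{w}}}$ must be reconciled with $R(n,k)$.

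That reconciliation is the one genuinely computational point, and the place I expect the main difficulty (really just careful bookkeeping of the big weights): it reduces to the identity
\[
\widehat{W}(n+1-k,k)\prod_{j=1}^{k}W(j,n+1-k)=\prod_{j=1}^{k}W(j,n-k).
\]
This I would prove by telescoping in the second variable, $W(j,n+1-k)=W(j,n-k)\,w(j,n+1-k)$, together with $\widehat{W}(n+1-k,k)=\prod_{i=1}^{k}w(i,n+1-k)^{-1}$, which comes straight from $\widehat{w}(s,t)=w(t,s)^{-1}$; since the weights commute as elements of the coefficient ring, the factor $\prod_{j}w(j,n+1-k)$ cancels against $\widehat{W}(n+1-k,k)$ and leaves $\prod_{j=1}^{k}W(j,n-k)$. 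The only care needed is that every product be interpreted uniformly via \eqref{prod}, so that the manipulations stay valid when $k<0$ (products over reversed ranges of inverses) and when $n-k<0$ (the small-weight products defining $W$ are likewise reversed). With initial conditions and recursion both matched, the characterization forces $L(n,k)=R(n,k)$ for all $n,k\in\Z$. I would close with the remark that this reflection is the coefficient-level shadow of the isomorphism $\phi_{y,x}$ of Lemma~\ref{lem:inv}, which interchanges $x$ and $y$ and sends $w$ to $\widehat{w}$; a bijective proof reflecting hybrid lattice paths across the diagonal $y=x$ is thus available as an alternative, with the factor $\prod_{j=1}^{k}W(j,n-k)$ accounting for the transfer of weights from east steps onto the reflected north steps.
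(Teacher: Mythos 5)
Your proposal is correct and follows essentially the same route as the paper's proof: verifying the initial conditions at $k=0$ and $k=n$ via the empty-product convention \eqref{prod}, then applying the recursion \eqref{def:wbineq2} to $\wBinomText{n+1}{n+1-k}{\widehat{w}}$ and using $\widehat{W}(n+1-k,k)=\prod_{j=1}^{k}w(j,n+1-k)^{-1}$ together with the telescoping $W(j,n+1-k)=W(j,n-k)\,w(j,n+1-k)$ to match both terms of the recursion. Your closing remark on the reflection interpretation likewise mirrors the paper's discussion following the theorem, so nothing further is needed.
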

\begin{proof}
  We proceed by showing that both sides of the equation satisfy the same
  recursion and initial conditions.
	
  For $k=0$ and $n=k$ we obtain from \eqref{def:wbineq1} and the definition
  for products \eqref{prod} that
\begin{align*}
\wBinom{n}{n}{{\widehat{w}}} \prod_{j=1}^{0} W(j,n)&=1=\wBinom{n}{0}{w},\\
	\wBinom{n}{0}{{\widehat{w}}} \prod_{j=1}^{n} W(j,0)&=1=\wBinom{n}{n}{w}.
\end{align*}
For $(n+1,k) \neq (0,0)$, note that
$$\widehat{W}(n+1-k,k)=\prod_{j=1}^{k}\widehat{w}(n+1-k,j)
=\prod_{j=1}^{k}w(j,n+1-k)^{-1}$$ to deduce the recurrence relation
\begin{align*}
  &A_{n+1,k} \eqdef \wBinom{n+1}{n+1-k}{{\widehat{w}}}
    \prod_{j=1}^{k} W(j,n+1-k) \\
  &= \Bigg(\wBinom{n}{n+1-k}{{\widehat{w}}}
    + \wBinom{n}{n-k}{{\widehat{w}}} \widehat{W}(n+1-k,k)\Bigg)
    \prod_{j=1}^{k} W(j,n+1-k)\\
  &=\wBinom{n}{n-(k-1)}{{\widehat{w}}}
    \Bigg(\prod_{j=1}^{k-1} W(j,n-(k-1))\Bigg) W(k,n+1-k)
    +  \wBinom{n}{n-k}{{\widehat{w}}}\prod_{j=1}^{k} W(j,n-k)\\
		&=A_{n,k-1}W(k,n+1-k)+A_{n,k},
\end{align*}
which is equal to the recurrence relation \eqref{def:wbineq2}
for $\wBinom{n}{k}{w}$.
\end{proof}
Theorem~\ref{thm:refl1} also has a combinatorial interpretation in terms of hybrid lattice paths weighted by area \eqref{eqn:area}. The $w$-binomial coefficient on the right side of the equation corresponds to lattice paths from $(0,0)$ to $(n-k,k)$, where the weights are inverted and reflected by the line $y=x$, see the leftmost path in Figure~\ref{fig:reflection}~and~\ref{fig:reflection2}. If we reflect such a path with its weights by the line $y=x$, we obtain a lattice path to $(k,n-k)$, where the weight of the path corresponds to the unit squares between the path and the $y$-axis and all weights are still inverted compared to the usual weighting, see the middle path in Figure~\ref{fig:reflection}~and~\ref{fig:reflection2}. By multiplying with the weights $\prod_{j=1}^{k} W(j,n-k)$, which are the corresponding weights to the rectangle with vertices $(0,0)$, $(k,0)$, $(k,n-k)$ and $(0,n-k)$, the weights between the $y$-axis and the path cancel and we are left with the (non-inverted) weights between the path and the $x$-axis, see the rightmost path in Figure~\ref{fig:reflection}~and~\ref{fig:reflection2}. So we are left with a path from $(0,0)$ to $(k,n-k)$ which is weighted as in \eqref{eqn:area}, since the signs do not change. 
\begin{figure}
	\centering
	\scalebox{0.9}{
	\begin{tikzpicture}[scale=1.2]
		\path[fill=gray!20] (1,0) rectangle (2,1) node[pos=0.5,color=black] {\tiny$w(1,2)^{-1}$};
		\path[fill=gray!20] (2,0) rectangle (3,1) node[pos=0.5,color=black] {\tiny$w(1,3)^{-1}$};
		\path[fill=gray!20] (3,0) rectangle (4,1) node[pos=0.5,color=black] {\tiny$w(1,4)^{-1}$};
		\path[fill=gray!20] (2,1) rectangle (3,2) node[pos=0.5,color=black] {\tiny$w(2,3)^{-1}$};
		\path[fill=gray!20] (3,1) rectangle (4,2) node[pos=0.5,color=black] {\tiny$w(2,4)^{-1}$};
		
		\draw[->,>=latex] (0,-0.3) -- (0,2.3);
		\draw[->,>=latex] (-0.3,0) -- (4.3,0);
		\draw[color=myred] (-0.3,-0.3) -- (2.3,2.3);
		
		\foreach \x in {0,...,4}
			\draw[dotted] (\x,0)--(\x,2);
		\foreach \y in {0,...,2}
			\draw[dotted] (0,\y)--(4,\y);
			
		\draw[line width=2pt, myblue] (0,0) -- (1,0) -- (1,1)--(2,1) -- (2,2)--(3,2)--(4,2);	
		
		\foreach \x in {0,...,4}
		\foreach \y in {0,...,2}
			\draw[fill=black](\x,\y)circle(2pt);
		
		\draw[thick,->,>=latex] (4.5,2) -- node [midway,above] {\tiny Reflect} (5.5,2);
		
		\path[fill=gray!20] (6,1) rectangle (7,2) node[pos=0.5,color=black] {\tiny$w(1,2)^{-1}$};
		\path[fill=gray!20] (6,2) rectangle (7,3) node[pos=0.5,color=black] {\tiny$w(1,3)^{-1}$};
		\path[fill=gray!20] (6,3) rectangle (7,4) node[pos=0.5,color=black] {\tiny$w(1,4)^{-1}$};
		\path[fill=gray!20] (7,2) rectangle (8,3) node[pos=0.5,color=black] {\tiny$w(2,3)^{-1}$};
		\path[fill=gray!20] (7,3) rectangle (8,4) node[pos=0.5,color=black] {\tiny$w(2,4)^{-1}$};
		
		\draw[->,>=latex] (6,-0.3) -- (6,4.3);
		\draw[->,>=latex] (5.7,0) -- (8.3,0);
		\draw[color=myred] (5.7,-0.3) -- (8.1,2.1);
		
		\foreach \x in {6,...,8}
			\draw[dotted] (\x,0)--(\x,4);
		\foreach \y in {0,...,4}
			\draw[dotted] (6,\y)--(8,\y);
		
		\draw[line width=2pt, myblue] (6,0) -- (6,1) -- (7,1)--(7,2) -- (8,2)--(8,3)--(8,4);	
		
		\foreach \x in {6,...,8}
		\foreach \y in {0,...,4}
			\draw[fill=black](\x,\y)circle(2pt);
		
		\draw[thick,->,>=latex] (8.5,2) -- node [midway,above] {\tiny $\displaystyle \prod_{j=1}^{k} W(j,n-k)$ } (9.5,2);
		
		\path[fill=gray!20] (10,0) rectangle (11,1) node[pos=0.5,color=black] {\tiny$w(1,1)$};
		\path[fill=gray!20] (11,0) rectangle (12,1) node[pos=0.5,color=black] {\tiny$w(2,1)$};
		\path[fill=gray!20] (11,1) rectangle (12,2) node[pos=0.5,color=black] {\tiny$w(2,2)$};
		
		\draw[->,>=latex] (10,-0.3) -- (10,4.3);
		\draw[->,>=latex] (9.7,0) -- (12.3,0);
		
		\foreach \x in {10,...,12}
			\draw[dotted] (\x,0)--(\x,4);
		\foreach \y in {0,...,4}
			\draw[dotted] (10,\y)--(12,\y);
		
		\draw[line width=2pt, myblue] (10,0) -- (10,1) -- (11,1)--(11,2) -- (12,2)--(12,3)--(12,4);	
		
		\foreach \x in {10,...,12}
		\foreach \y in {0,...,4}
			\draw[fill=black](\x,\y)circle(2pt);
	\end{tikzpicture}
}
	\caption{An illustration of the combinatorial interpretation of Theorem~\ref{thm:refl1} for $0 \leq k \leq n$.}
	\label{fig:reflection}
\end{figure}
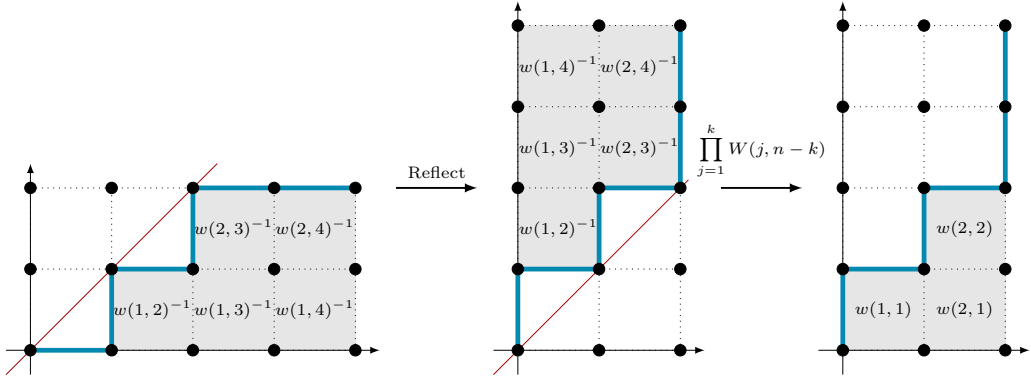

\begin{figure}
	\centering
	\scalebox{0.9}{
	\begin{tikzpicture}[scale=1.2]
		\path[fill=gray!20] (0,-1) rectangle +(1,1) node[pos=0.5,color=black] {\tiny$w(0,1)^{\scalebox{0.8}[1]{-}1}$};
		\path[fill=gray!20] (1,-1) rectangle +(1,1) node[pos=0.5,color=black] {\tiny$w(0,2)^{\scalebox{0.8}[1]{-}1}$};
		\path[fill=gray!20] (0,-2) rectangle +(1,1) node[pos=0.5,color=black] {\tiny$w(\scalebox{0.8}[1]{-}1,1)^{\scalebox{0.8}[1]{-}1}$};
		\path[fill=gray!20] (1,-2) rectangle +(1,1) node[pos=0.5,color=black] {\tiny$w(\scalebox{0.8}[1]{-}1,2)^{\scalebox{0.8}[1]{-}1}$};
		\path[fill=gray!20] (1,-3) rectangle +(1,1) node[pos=0.5,color=black] {\tiny$w(\scalebox{0.6}{-}2,2)^{\scalebox{0.6}{-}1}$};
		
		\draw[->,>=latex] (0,-4.3) -- (0,0.3);
		\draw[->,>=latex] (-0.3,0) -- (2.3,0);
		\draw[color=myred] (-0.5,-0.5) -- (0.5,0.5);
	
		\foreach \x in {0,...,2}
			\draw[dotted] (\x,0)--(\x,-4);
		\foreach \y in {-4,...,0}
			\draw[dotted] (0,\y)--(2,\y);
		
		\draw[line width=2pt, myblue] (0,0)--(0,-2)--(1,-2)--(1,-3)--(2,-3)--(2,-4);	
		
		\foreach \x in {0,...,2}
		\foreach \y in {-4,...,0}
			\draw[fill=black](\x,\y)circle(2pt);
		
		\draw[thick,->,>=latex] (2.3,-2) -- (2.7,-2);
		
		\path[fill=gray!20] (6,-3) rectangle +(1,1) node[pos=0.5,color=black] {\tiny$w(0,1)$};
		\path[fill=gray!20] (6,-2) rectangle +(1,1) node[pos=0.5,color=black] {\tiny$w(0,2)$};
		\path[fill=gray!20] (5,-3) rectangle +(1,1) node[pos=0.5,color=black] {\tiny$w(\scalebox{0.8}[1]{-}1,1)$};
		\path[fill=gray!20] (5,-2) rectangle +(1,1) node[pos=0.5,color=black] {\tiny$w(\scalebox{0.8}[1]{-}1,2)$};
		\path[fill=gray!20] (4,-2) rectangle +(1,1) node[pos=0.5,color=black] {\tiny$w(\scalebox{0.8}[1]{-}2,2)$};
		
		\draw[->,>=latex] (7,-3.3) -- (7,-0.7);
		\draw[->,>=latex] (2.7,-3) -- (7.3,-3);
		\draw[color=myred] (6.5,-3.5) -- (7.5,-2.5);
		
		\foreach \x in {3,...,7}
			\draw[dotted] (\x,-3)--(\x,-1);
		\foreach \y in {-3,...,-1}
			\draw[dotted] (3,\y)--(7,\y);
		
		\draw[line width=2pt, myblue] (7,-3) -- (5,-3) -- (5,-2) -- (4,-2) -- (4,-1) -- (3,-1);	
		
		\foreach \x in {3,...,7}
		\foreach \y in {-3,...,-1}
			\draw[fill=black](\x,\y)circle(2pt);
		
		\draw[thick,->,>=latex] (7.3,-2) -- (7.7,-2);
		
		\path[fill=gray!20] (9,-3) rectangle +(1,1) node[pos=0.5,color=black] {\tiny$w(\scalebox{0.8}[1]{-}2,1)^{\scalebox{0.8}[1]{-}1}$};
		\path[fill=gray!20] (8,-2) rectangle +(1,1) node[pos=0.5,color=black] {\tiny$w(\scalebox{0.8}[1]{-}3,2)^{\scalebox{0.8}[1]{-}1}$};
		\path[fill=gray!20] (8,-3) rectangle +(1,1) node[pos=0.5,color=black] {\tiny$w(\scalebox{0.8}[1]{-}3,1)^{\scalebox{0.8}[1]{-}1}$};
		
		\draw[->,>=latex] (12,-3.3) -- (12,-0.7);
		\draw[->,>=latex] (7.7,-3) -- (12.3,-3);
		
		\foreach \x in {8,...,12}
			\draw[dotted] (\x,-3)--(\x,-1);
		\foreach \y in {-3,...,-1}
			\draw[dotted] (8,\y)--(12,\y);
		
		\draw[line width=2pt, myblue] (12,-3) -- (10,-3) -- (10,-2) -- (9,-2) -- (9,-1) -- (8,-1);	
		
		\foreach \x in {8,...,12}
		\foreach \y in {-3,...,-1}
			\draw[fill=black](\x,\y)circle(2pt);
	\end{tikzpicture}
}
	\caption{An illustration of the combinatorial interpretation of Theorem~\ref{thm:refl1} for $k \leq n <0$.}
	\label{fig:reflection2}
\end{figure}
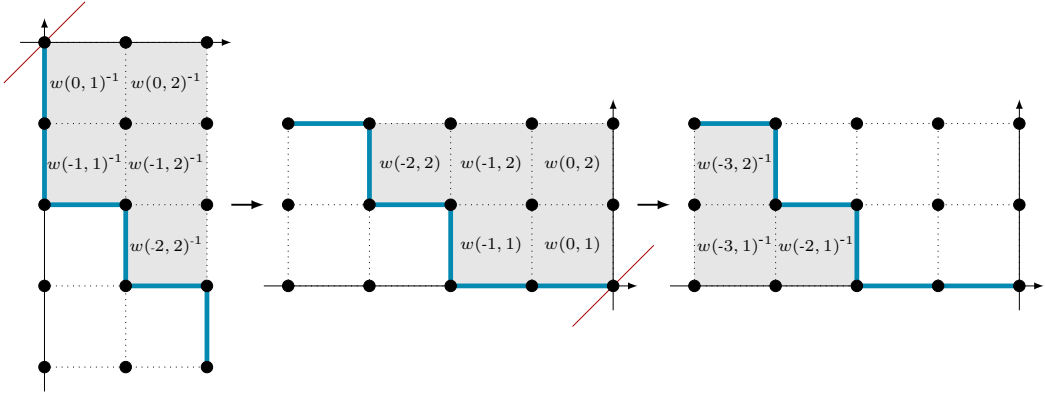

In the case of $w(s,t)=q$, Theorem~\ref{thm:refl1} reduces to the
well-known formula
$$\eBinom{n}{k}{q}=\eBinom{n}{n-k}{q^{-1}} q^{k(n-k)}.$$
In the following theorem we give two reflection formulae from which
we can deduce the behaviour of $w$-binomial coefficients with negative
values (see Proposition~\ref{prop:six}). Recall the involutions
\eqref{eqn:inv3} and \eqref{eqn:inv6} to define the dual weight functions
\begin{align}
  \widetilde{w}(s,t)&\eqdef w_{x^{-1},x^{-1}y}(s,t)=w(1-s-t,t)^{-1},
                      \label{def:tildeweight}\\
  \breve{w}(s,t)&\eqdef w_{y^{-1}x,y^{-1}}(s,t)
                  =w(s,1-s-t)^{-1}. \label{def:breveweight}
\end{align}
The proof of both equations can be done analogously to the proof of
Theorem~\ref{thm:refl1}; we therefore omit the proof.

\begin{theorem}\label{thm:refl2}
Let $n,k \in \Z$. Then,
\begin{align}
  \wBinom{n}{k}{w}
  &= (-1)^{n-k} \mathrm{sgn} (n-k)
   \wBinom{-k-1}{-n-1}{{\widetilde{w}}}
    \prod_{j=1}^{n-k} W(n+1-j,j)^{-1} \label{eqn:refl2a} \\
  &= (-1)^{k} \mathrm{sgn} (k) \wBinom{k-n-1}{k}{{\breve{w}}}
    \prod_{j=1}^{k} W(j,-j) \label{eqn:refl2b}
\end{align}
where $\widetilde{w}(s,t)=w(1-s-t,t)^{-1}$, $\breve{w}(s,t)=w(s,1-s-t)^{-1}$
and $\mathrm{sgn}(k)$ is defined by \eqref{def:sgn}.
\end{theorem}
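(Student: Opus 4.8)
The plan is to imitate the proof of Theorem~\ref{thm:refl1}: I would show that the right-hand side of \eqref{eqn:refl2a} obeys the same recursion \eqref{def:wbineq} and the same initial conditions as $\wBinom{n}{k}{w}$, and then conclude equality from the fact that these data determine the $w$-binomial coefficients for all $n,k\in\Z$. Write $B_{n,k}$ for the right-hand side of \eqref{eqn:refl2a}, with $\widetilde w$ as in \eqref{def:tildeweight}, and set $\widetilde W(s,t)=\prod_{j=1}^{t}\widetilde w(s,j)$ for the big weight attached to $\widetilde w$, so that $\widetilde W(-n-1,t)=\prod_{i=1}^{t}w(n+2-i,i)^{-1}$.

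First I would check the boundary values. For $k=n$ the trailing product in $B_{n,n}$ is empty, $\mathrm{sgn}(0)=1$, and $\wBinom{-n-1}{-n-1}{\widetilde w}=1$ by \eqref{def:wbineq1}, so $B_{n,n}=1$. For $k=0$ the factor $\wBinom{-1}{-n-1}{\widetilde w}$ is supplied by Example~\ref{ex:minus1}; collecting the sign $(-1)^{n}(-1)^{-n-1}=-1$ together with $\mathrm{sgn}(n)\,\mathrm{sgn}(-n-1)=-1$ reduces $B_{n,0}$ to a product of $\widetilde W$-weights against $\prod_{j=1}^{n}W(n+1-j,j)^{-1}$, which I expect to telescope to $1$ via $\widetilde w(s,t)=w(1-s-t,t)^{-1}$ and the product conventions \eqref{prod}. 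This is routine but sign-sensitive.

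For the recursion I would insert the defining recurrence \eqref{def:wbineq2} for the $\widetilde w$-coefficients, evaluated at top index $-k-1$ and bottom index $-n-1$, namely $\wBinom{-k}{-n-1}{\widetilde w}=\wBinom{-k-1}{-n-1}{\widetilde w}+\wBinom{-k-1}{-n-2}{\widetilde w}\,\widetilde W(-n-1,n+1-k)$, into $B_{n,k}+B_{n,k-1}\,W(k,n+1-k)$. Two simplifications drive the computation. First, the trailing products satisfy $\big(\prod_{j=1}^{n-k+1}W(n+1-j,j)^{-1}\big)W(k,n+1-k)=\prod_{j=1}^{n-k}W(n+1-j,j)^{-1}$, since the extra $j=n-k+1$ factor is exactly $W(k,n+1-k)^{-1}$; this makes the two occurrences of $\wBinom{-k-1}{-n-1}{\widetilde w}$ cancel, provided $\mathrm{sgn}(n-k)=\mathrm{sgn}(n-k+1)$. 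Second, the anti-diagonal identity $\prod_{j=1}^{n+1-k}W(n+2-j,j)^{-1}=\widetilde W(-n-1,n+1-k)\prod_{j=1}^{n-k}W(n+1-j,j)^{-1}$ matches the surviving $\wBinom{-k-1}{-n-2}{\widetilde w}$ term with $B_{n+1,k}$. The fact that all these big weights sit on the line $s+t=n+2$ is exactly what produces the shift $1-s-t$ in $\widetilde w$.

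The main obstacle is the bookkeeping of the factors $(-1)^{n-k}\mathrm{sgn}(n-k)$ at the indices where $\mathrm{sgn}$ jumps, i.e.\ where $n-k$ passes from $-1$ to $0$; there the cancellation of the $\wBinom{-k-1}{-n-1}{\widetilde w}$ terms fails, mirroring the exclusion of $(n+1,k)=(0,0)$ in \eqref{def:wbineq2}. These exceptional indices must be verified by hand against the vanishing regions \eqref{eqn:zeroregions} and the initial conditions, and this is precisely where the argument departs from the sign-free setting of Theorem~\ref{thm:refl1}. Finally, for \eqref{eqn:refl2b} I would either rerun the same recursion argument with $\breve w$ from \eqref{def:breveweight} in place of $\widetilde w$ (the relevant weights now lying on an anti-diagonal in the other coordinate), or, more economically, deduce it by composing \eqref{eqn:refl2a} with Theorem~\ref{thm:refl1}, exploiting the factorization $\phi_{y^{-1}x,y^{-1}}=\phi_{y,x}\circ\phi_{x^{-1},x^{-1}y}\circ\phi_{y,x}$ of the underlying involution and tracking the resulting products and signs through the composition.
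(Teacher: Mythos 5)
Your proposal is correct and takes essentially the same route as the paper: the paper omits the proof of Theorem~\ref{thm:refl2}, saying only that it proceeds analogously to Theorem~\ref{thm:refl1}, i.e., by checking the initial conditions and the recurrence \eqref{def:wbineq2}, which is exactly what you flesh out, including the correct key identities (the absorption of $W(k,n+1-k)$ into the trailing product and the anti-diagonal identity for $\widetilde W(-n-1,n+1-k)$) and the extra sign bookkeeping at the indices where $\mathrm{sgn}(n-k)$ jumps, handled via the vanishing regions \eqref{eqn:zeroregions}. No gap to report.
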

By a careful case-by-case analysis one can also find combinatorial interpretations of the formulas in Theorem~\ref{thm:refl2} similar to the interpretation of Theorem~\ref{thm:refl1} above. We leave the details to the reader.

Given these reflection formulae, we can give a weight-dependent
generalization of a Proposition by Loeb~\cite[Proposition 4.1]{L}.
\begin{proposition}[The six regions]\label{prop:six}
  Let $n,k \in \Z$, $\widehat{w}(s,t)=w(t,s)^{-1}$,
  $\widetilde{w}(s,t)=w(1-s-t,t)^{-1}$ and $\breve{w}(s,t)=w(s,1-s-t)^{-1}$.
  Depending on the signs of $n$ and $k$, the following formulae apply:
\begin{enumerate}
\item If $0 \leq k \leq n$, then $\wBinomText{n}{k}{w}=\wBinomText{n}{k}{w}$.
\item If $n < 0 \leq k$, then
  $\wBinomText{n}{k}{w}=(-1)^{k} \wBinomText{k-n-1}{k}{{\breve{w}}}
  \prod_{j=1}^{k} W(j,-j)$.
\item If $k \leq n < 0$, then
  $\wBinomText{n}{k}{w}=(-1)^{n-k} \wBinomText{-k-1}{-n-1}{{\widetilde{w}}}
  \prod_{j=1}^{n-k} W(n+1-j,j)^{-1}$.
\item If $0 \leq n < k$, then $\wBinomText{n}{k}{w}=0$.
\item If $n<k<0$, then $\wBinomText{n}{k}{w}=0$.
\item If $k<0\leq n$, then $\wBinomText{n}{k}{w}=0$.
\end{enumerate}
\end{proposition}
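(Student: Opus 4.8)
The plan is to establish the six cases directly from the material already developed, treating the three "nonzero" regions (1)--(3) and the three "vanishing" regions (4)--(6) separately, since each group has a uniform source.

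\medskip

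\noindent\textbf{The vanishing cases (4)--(6).}
First I would dispose of cases (4), (5) and (6) at once, since these are precisely the three regions appearing in Equation~\eqref{eqn:zeroregions}. That equation asserts $\wBinomText{n}{k}{w}=0$ for $0\le n<k$, for $n<k<0$, and for $k<0\le n$, which are exactly the hypotheses of (4), (5) and (6), respectively. Thus these three cases require no further argument beyond citing \eqref{eqn:zeroregions} (whose inductive proof was already indicated in the proof of Theorem~\ref{thm:hlp}); combinatorially, they reflect the fact that no hybrid lattice path reaches the endpoint $(k,n-k)$ in those regions.

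\medskip

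\noindent\textbf{The identity case (1).}
Case (1) is a tautology as stated---$\wBinomText{n}{k}{w}=\wBinomText{n}{k}{w}$---so there is nothing to prove; it merely records that in the fundamental region $0\le k\le n$ the $w$-binomial coefficient is left in its standard form, matching the lattice-path count \eqref{eqn:wbin}.

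\medskip

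\noindent\textbf{The reflection cases (2) and (3).}
The substance is in cases (2) and (3), and here I would read them off directly from Theorem~\ref{thm:refl2}. For case (3), the hypothesis $k\le n<0$ forces $n-k\ge 0$, so $\mathrm{sgn}(n-k)=1$ by the definition \eqref{def:sgn}, and formula \eqref{eqn:refl2a} collapses to
\[
\wBinom{n}{k}{w}=(-1)^{n-k}\,\wBinom{-k-1}{-n-1}{{\widetilde{w}}}\prod_{j=1}^{n-k}W(n+1-j,j)^{-1},
\]
which is exactly the asserted formula in (3). Symmetrically, for case (2) the hypothesis $n<0\le k$ gives $k\ge 0$, hence $\mathrm{sgn}(k)=1$, and formula \eqref{eqn:refl2b} reduces to
\[
\wBinom{n}{k}{w}=(-1)^{k}\,\wBinom{k-n-1}{k}{{\breve{w}}}\prod_{j=1}^{k}W(j,-j),
\]
which is precisely (2). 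So the entire Proposition follows by specializing the sign factors in Theorem~\ref{thm:refl2} to the two half-planes $k\ge 0$ and $n-k\ge 0$, together with the vanishing statement \eqref{eqn:zeroregions}.

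\medskip

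\noindent I do not expect a genuine obstacle here: the Proposition is essentially a repackaging of Theorem~\ref{thm:refl2} and Equation~\eqref{eqn:zeroregions} organized by the signs of $n$ and $k$. The only point requiring a moment's care is verifying that the regions (2) and (3) lie entirely within the ranges where $\mathrm{sgn}(k)=1$ and $\mathrm{sgn}(n-k)=1$ respectively, so that the signum factors in \eqref{eqn:refl2a} and \eqref{eqn:refl2b} disappear and leave the clean formulae stated; this is immediate from the inequalities defining each case.
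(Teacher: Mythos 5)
Your proposal is correct and matches the paper's own (implicit) argument exactly: the paper likewise obtains cases (2) and (3) by specializing the sign factors in Theorem~\ref{thm:refl2} (noting $\mathrm{sgn}(k)=1$ when $k\ge 0$ and $\mathrm{sgn}(n-k)=1$ when $k\le n$), treats case (1) as trivial, and refers cases (4)--(6) back to the vanishing statement \eqref{eqn:zeroregions} established in the proof of Theorem~\ref{thm:hlp}. The only blemish is the final sentence's swapped pairing of \eqref{eqn:refl2a}/\eqref{eqn:refl2b} with cases (2)/(3), which contradicts your (correct) earlier matching but is harmless.
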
 
Case $(1)$ corresponds to ordinary weighted lattice paths. The cases
$(2)$ and $(3)$ show that weighted hybrid lattice paths from $(0,0)$
to $(n,m)$, where $n$ or $m$ is negative, can be realized as ordinary
weighted lattice paths where the weights $w(s,t)$ are replaced by
$w(s,1-s-t)^{-1}$ or $w(1-s-t,t)^{-1}$ and additionally, the weight of
the path is multiplied by some weight given in the above equations.
Cases $(4)-(6)$ are already discussed in the proof of Theorem~\ref{thm:hlp}.

\subsection{An extension of the noncommutative binomial theorem}
For the purpose of the following theorem, we consider the algebras of
formal power series $\C_w[[x,x^{-1},y]]$ and $\C_w[[x,y,y^{-1}]]$ of formal
power series. Suppose $f_n(x,y)$ is a function with power series expansions
\begin{subequations} \label{eqn:power}
\begin{align}
  f_n(x,y)=\sum_{k\geq 0} a_k x^k y^{n-k}
  & \quad \text{in $\C_w[[x,y,y^{-1}]]$} \\
\intertext{or}
f_n(x,y)=\sum_{k\leq n} b_{k} x^{k} y^{n-k} &\quad \text{in $\C_w[[x,x^{-1},y]]$}.
\end{align}
\end{subequations}
Following \cite{FS}, we extract coefficients of the expansions by writing
$[x^ky^{n-k}]f_n(x,y) = a_k$ for $k\geq0$ and $[x^ky^{n-k}]f_n(x,y) = b_k$
for $k<0$. Now we can state the weight-dependent noncommutative binomial
theorem for integer values which generalizes the results of \cite{FS},
\cite{L} and \cite{Schl1}.

\begin{theorem}\label{thm:extbinomial}
  Let $n,k\in \Z$ and $x,y$ be invertible variables satisfying the
  commutation relations \eqref{def:Cwxy3}, then we have
\begin{equation}\label{eqn:extbinomial}
[x^k y^{n-k}](x+y)^n = \wBinom{n}{k}{w}.
\end{equation}
\end{theorem}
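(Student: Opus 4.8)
The plan is to establish the identity

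$$(x+y)^n = \sum_k \wBinom{n}{k}{w}\, x^k y^{n-k}$$

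as a formal power series (with the summation range dictated by the chosen completion) and then read off the coefficient of $x^k y^{n-k}$. For $n \geq 0$ the statement is exactly Theorem~\ref{thm:binomial}, so the work is entirely in the case $n < 0$, where I must give separate arguments in the two completions $\C_w[[x,y,y^{-1}]]$ (no negative powers of $x$, the default) and $\C_w[[x,x^{-1},y]]$ (no negative powers of $y$). The two cases should be related by the involution $\phi_{y,x}$ of Lemma~\ref{lem:inv}, so I expect to prove one of them directly and deduce the other by applying this isomorphism to both sides, exactly as announced in the remark following Lemma~\ref{lem:inv}.

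\textbf{The default case $n<0$ in $\C_w[[x,y,y^{-1}]]$.} Here I would write $(x+y)^n = \big((1+xy^{-1})y\big)^n$ and reduce to expanding $(1+xy^{-1})^n$. First I would derive the single-variable weighted binomial series: using the commutation relations \eqref{subeqn:rel4}--\eqref{subeqn:rel5} and \eqref{eqn:noncommrelbonus}, I would set $u \eqdef xy^{-1}$ and work out how $u$ interacts with the weights, then prove by induction on $m \geq 0$ that $(1+u)^{-1}$ has the geometric-type expansion $\sum_{j\geq 0}(-1)^j(\text{weight})\,u^j$ and, more generally, establish a formula for $(1+u)^n$ with $n<0$ whose coefficients are governed by the recursion \eqref{def:wbineq}. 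Concretely, I would prove that the sequence of coefficients $c_k^{(n)}$ defined by $(x+y)^n = \sum_{k\geq 0} c_k^{(n)} x^k y^{n-k}$ satisfies \eqref{def:wbineq1}--\eqref{def:wbineq2}: the initial conditions follow because the $x^0 y^n$ coefficient is $1$, and the recursion is obtained by multiplying $(x+y)^{n+1} = (x+y)(x+y)^n$ out and matching coefficients, where passing $x$ and $y$ past the existing monomials $x^k y^{n-k}$ produces precisely the big-weight factor $W(k,n+1-k)$ via Lemma~\ref{lem:com}. Since $c_k^{(n)}$ and $\wBinom{n}{k}{w}$ satisfy the same recursion and initial data, they coincide.

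\textbf{The case $n<0$ in $\C_w[[x,x^{-1},y]]$.} Applying the isomorphism $\phi_{y,x}\colon A_{x,y}\to A_{y,x}$ to the identity just proved swaps the roles of $x$ and $y$ and replaces $w(s,t)$ by $\widehat{w}(s,t)=w(t,s)^{-1}$; this converts an expansion with no negative powers of $x$ into one with no negative powers of $y$. Matching the resulting coefficients against $\wBinom{n}{k}{w}$ then requires the reflection formula of Theorem~\ref{thm:refl1}, which relates $\wBinom{n}{k}{w}$ to $\wBinom{n}{n-k}{\widehat{w}}$ up to the factor $\prod_{j=1}^k W(j,n-k)$; this factor is exactly the big-weight correction picked up when one re-normalizes the monomials after swapping $x$ and $y$.

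\textbf{The main obstacle} I anticipate is bookkeeping the noncommutativity cleanly in the $n<0$ induction: every time one factors $(x+y)$ off $(x+y)^n$ and rewrites $y\cdot x^k y^{n-k}$ or $x\cdot x^k y^{n-k}$ back into the standard monomial form $x^{k'}y^{n+1-k'}$, a weight factor is generated, and one must verify that the accumulated factor is precisely $W(k,n+1-k)$ so that the coefficient recursion matches \eqref{def:wbineq2} on the nose — including the boundary index where the excluded case $(n+1,k)=(0,0)$ enters. A secondary subtlety is confirming that the two completions genuinely give different expansions for $n<0$ (as emphasized in the introduction) yet both correctly reproduce $\wBinom{n}{k}{w}$ on their respective index ranges ($k\geq 0$ versus $k<0$), so that together they cover all $k\in\Z$.
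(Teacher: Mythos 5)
Your overall architecture coincides with the paper's proof: dispose of $n\geq 0$ via Theorem~\ref{thm:binomial}, treat $n<0$ in the default completion $\C_w[[x,y,y^{-1}]]$ by a geometric-series expansion plus an induction that matches coefficients against the defining recursion \eqref{def:wbineq2}, and then transport the result to $\C_w[[x,x^{-1},y]]$ by applying the involution $\phi_{y,x}$ and correcting with the reflection formula of Theorem~\ref{thm:refl1} together with Lemma~\ref{lem:com}. That last step is verbatim what the paper does.

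There is, however, a concrete error in the step you yourself flag as the crux. You propose to extract the recursion for the coefficients $c_k^{(n)}$ from the factorization $(x+y)^{n+1}=(x+y)(x+y)^n$, i.e.\ multiplying by $(x+y)$ on the \emph{left}, and you claim this produces the factor $W(k,n+1-k)$. It does not. By Lemma~\ref{lem:com}, $y\cdot x^k y^{n-k}=\bigl(\prod_{i=1}^{k}w(i,1)\bigr)x^k y^{n+1-k}$, a product of small weights at height $1$; worse, the coefficients $c_k^{(n)}$ are built from the weights $w(s,t)$ and hence do not commute with $x$ and $y$, so left multiplication also shifts every weight inside $c_k^{(n)}$ via \eqref{subeqn:rel4}--\eqref{subeqn:rel5}. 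What you actually get is
\begin{equation*}
c_k^{(n+1)}=\epsilon_x\bigl(c_{k-1}^{(n)}\bigr)
+\epsilon_y\bigl(c_k^{(n)}\bigr)\prod_{i=1}^{k}w(i,1),
\end{equation*}
where $\epsilon_x$, $\epsilon_y$ denote the weight-shift substitutions $w(s,t)\mapsto w(s+1,t)$ and $w(s,t)\mapsto w(s,t+1)$; this is \emph{not} \eqref{def:wbineq2}, so the ``same recursion, same initial data'' conclusion fails as written. The repair is to factor on the \emph{right}: $(x+y)^{n+1}=(x+y)^n(x+y)$. Then the coefficients stay untouched on the left, $x^k y^{n-k}\cdot y=x^k y^{n+1-k}$ carries no weight, and $x^{k-1}y^{n+1-k}\cdot x=W(k,n+1-k)\,x^{k}y^{n+1-k}$ (pass $x$ through $y^{n+1-k}$ by Lemma~\ref{lem:com}, then through $x^{k-1}$ by \eqref{subeqn:rel4}), which yields exactly \eqref{def:wbineq2}; this right-sided induction is precisely how the paper's proof runs. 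With that one-sided correction your argument goes through.
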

\begin{proof}
We will prove the theorem by showing that for $n \in \Z$,
\begin{align}
  (x+y)^n &= \sum_{k\geq0} \wBinom{n}{k}{w} x^k y^{n-k} \quad
            \text{in $\C_w[[x,y,y^{-1}]]$}  \label{eqn:extbin1} \\
  \intertext{or}
  (x+y)^n &= \sum_{k\leq n} \wBinom{n}{k}{w} x^{k} y^{n-k} \quad
            \text{in $\C_w[[x,x^{-1},y]]$}. \label{eqn:extbin2}
\end{align}
We begin with \eqref{eqn:extbin1}. The case $n\geq0$ is just
Theorem \ref{thm:binomial} combined with the fact that
$\wBinomText{n}{k}{w}=0$ if $0\leq n < k$. We prove the case $n<0$
by induction. 
	
For $n=-1$, we use the geometric series and apply the
relations \eqref{def:Cwxy3} to obtain
\begin{align*}
  (x+y)^{-1}&=y^{-1}(xy^{-1}+1)^{-1}=y^{-1} \sum_{k \geq 0} (-1)^k (xy^{-1})^k\\
 & = \sum_{k\geq0} (-1)^k \Big( \prod_{j=1}^{k} W(j,-j) \Big) x^k y^{-k-1}.
\end{align*}
From Example~\ref{ex:minus1} we know that for $k\geq0$
$$(-1)^k \prod_{j=1}^{k} W(j,-j)= \wBinom{-1}{k}{w}.$$
Suppose \eqref{eqn:extbin1} holds for some $n+1 \leq -1$. Then
\begin{align*}
(x+y)^{n+1}&=\sum_{k\geq0} \wBinom{n+1}{k}{w} x^k y^{n+1-k} \\
           &=\sum_{k\geq0} \wBinom{n}{k}{w} x^k y^{n+1-k}
             + \sum_{k\geq0} \wBinom{n}{k-1}{w} W(k,n+1-k) x^k y^{n+1-k}\\
           &=\sum_{k\geq0} \wBinom{n}{k}{w} x^k y^{n+1-k}
             + \sum_{k\geq0} \wBinom{n}{k}{w} W(k+1,n-k) x^{k+1} y^{n-k}\\
	&=\Big( \sum_{k\geq 0} \wBinom{n}{k}{w} x^k y^{n-k} \Big) (x+y)
\end{align*}
Hence,  \eqref{eqn:extbin1} is true for $n$ as well and therefore, it is
true for all $n\in \Z$.  Since we pulled out $y^{-1}$ in the initial step,
the expansion is true in $\C_w[[x,y,y^{-1}]]$.
	
In order to prove the second expansion \eqref{eqn:extbin2} we apply
the involution~\eqref{eqn:inv1} to \eqref{eqn:extbin1} to obtain
\begin{equation*}
(y+x)^n = \sum_{k \geq 0} \wBinom{n}{k}{{\widehat{w}}} y^k x^{n-k},
\end{equation*}
where $\widehat{w}(s,t)=w(t,s)^{-1}$. We use the reflection formula of
Theorem~\ref{thm:refl1} to obtain
\begin{align*}
  (x+y)^n &= (y+x)^n = \sum_{k \geq 0} \wBinom{n}{k}{{\widehat{w}}} y^k x^{n-k}
            =\sum_{k \geq 0} \wBinom{n}{k}{{\widehat{w}}}
            \Big( \prod_{i=1}^{n-k} W(i,k) \Big) x^{n-k} y^{k}\\
          &=\sum_{k \geq 0} \wBinom{n}{n-k}{w} x^{n-k} y^{k}
            =\sum_{k\leq n} \wBinom{n}{k}{w} x^{k} y^{n-k}
\end{align*}
in $\C_{w}[[x,x^{-1},y]]$, as claimed.
\end{proof}

Recall the interpretation of expansion \eqref{eqn:extbin1} in the case
$n\geq 0$ in terms of weighted lattice paths in the paragraph before
Theorem~\ref{thm:binomial}. It turns out that this interpretation is also
valid for the case $n<0$ and for expansion \eqref{eqn:extbin2}. In the
following we will extend the details of this interpretation.

In Theorem~\ref{thm:hlp} we interpreted the $w$-binomial coefficients as
the weighted sum over hybrid lattice paths. Identify expressions (or words)
in $\C_w[x,x^{-1},y,y^{-1}]$ with hybrid lattice paths locally (variable by
variable, or step by step) as follows:
\begin{align*}
x & \leftrightarrow \text{east step} \\
x^{-1} & \leftrightarrow \text{west step} \\
y & \leftrightarrow \text{north step} \\
y^{-1} & \leftrightarrow \text{south step} 
\end{align*}
This means, reading from left to right, that $xy^{-1}$ corresponds to a
path where an east step is followed by a south step and $y^{-1}x$ corresponds
to a path where the south step is followed by an east step. The relations
of the algebra $\C_w[x,x^{-1},y,y^{-1}]$ take into account the changes of
the respective weights when two consecutive steps are being interchanged.
For instance, the lattice path $P_0$ from the left side of
Figure~\ref{fig:path2} corresponds to the following expression:
\begin{align*}
&y^{-1}xy^{-1}y^{-1}xy^{-1} = w(1,0)^{-1} x y^{-2} w(1,0)^{-1} x y^{-2} \\
  &=w(1,0)^{-1} w(2,-2)^{-1} w(2,-1)^{-1} w(2,0)^{-1} x^2 y^{-4}
    = (-1)^2 w(P_0) x^2 y^{-4}
\end{align*}
where $w(P_0)$ is the weight of the path $P_0$ assigned to hybrid lattice
paths in Theorem~\ref{thm:hlp}.

In summary, the expression $(x+y)^n$ translates into the sum over all
expressions in $\C_w[x,x^{-1},y,y^{-1}]$ corresponding to a hybrid lattice
path from $(0,0)$ to $(k,n-k)$, where $k$ can be any nonnegative integer
(in \eqref{eqn:extbin1}) or any integer $\leq n$ (in \eqref{eqn:extbin2}),
and the expression is multiplied by
$\epsilon=\mathrm{sgn}(n)^k \mathrm{sgn}(k)^n$.

\subsection{Convolution formulae}\label{sec:conv}
In~\cite{Schl1}, the second author derived a weight-dependent
generalization of the Chu--Vandermonde convolution formula 
\[
	\binom{n+m}{k} = \sum_{j=0}^{k} \binom{n}{j} \binom{m}{k-j}
\]
by expanding $(x+y)^{n+m}$ on the one hand and $(x+y)^n(x+y)^m$ on the
other hand using Theorem~\ref{thm:binomial} and comparing the coefficients
of $x^ky^{n+m-k}$ for $n,m,k\geq0$. Given Theorem~\ref{thm:extbinomial},
we can expand $(x+y)^{n+m}$ and $(x+y)^n(x+y)^m$ using \eqref{eqn:extbin1}
for all $n,m \in \Z$ to obtain the following weight-dependent convolution
formula by again comparing the coefficients of $x^ky^{n+m-k}$ for $k\geq 0$.
The proof is similar to the proof of \cite[Corollary~1]{Schl1} and
therefore we omit the details.
\begin{corollary}\label{cor:wdconv1}
 Let $n,m \in \Z$ and $k\geq 0$.
 For the $w$-binomial coefficients in \eqref{def:wbineq}, defined by the
 doubly-indexed sequence of invertible variables $(w(s,t))_{s,t\in\Z}$,
 we have the following formal identity in $\C[(w(s,t))_{s,t\in\Z}]$:
 \begin{equation}\label{eqn:wdconv1id}
 	\wBinom{n+m}{k}{w}=\sum_{j=0}^{k}
 	\wBinom{n}{j}{w}
 	\left(x^jy^{n-j}\,
 	\wBinom{m}{k-j}{w}
 	y^{j-n}x^{-j}\right)
 	\prod_{i=1}^{k-j}W(i+j,n-j).
 \end{equation}
\end{corollary}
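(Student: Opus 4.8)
The plan is to read off both sides of the operator identity $(x+y)^{n+m}=(x+y)^n(x+y)^m$ via the expansion \eqref{eqn:extbin1} of Theorem~\ref{thm:extbinomial}, working throughout in the formal power series algebra $\C_w[[x,y,y^{-1}]]$, and then to compare the coefficients of $x^k y^{n+m-k}$. On the left, \eqref{eqn:extbin1} applied with exponent $n+m$ gives immediately that the coefficient of $x^k y^{n+m-k}$ is $\wBinom{n+m}{k}{w}$. On the right I would write
$$(x+y)^n(x+y)^m=\Bigl(\sum_{j\geq 0}\wBinom{n}{j}{w}x^j y^{n-j}\Bigr)\Bigl(\sum_{i\geq 0}\wBinom{m}{i}{w}x^i y^{m-i}\Bigr)$$
and reduce each product term $\wBinom{n}{j}{w}\,x^j y^{n-j}\,\wBinom{m}{i}{w}\,x^i y^{m-i}$ to the canonical form ``(coefficient in the $w$-variables)\,$\times\,x^{j+i}y^{n+m-j-i}$''.

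The reduction to canonical form is the computational core and splits into two moves, both dictated by the relations \eqref{def:Cwxy3}. First, I would conjugate the weight expression $\wBinom{m}{i}{w}$ leftward past $x^j y^{n-j}$. Since conjugation by the invertible element $x^j y^{n-j}$ is an algebra automorphism, it suffices to track its effect on a single weight, and \eqref{subeqn:rel4}--\eqref{subeqn:rel5} give $x\,w(s,t)\,x^{-1}=w(s+1,t)$ and $y\,w(s,t)\,y^{-1}=w(s,t+1)$, whence $w(s,t)\mapsto w(s+j,t+n-j)$. Thus $x^jy^{n-j}\,\wBinom{m}{i}{w}=\bigl(x^jy^{n-j}\,\wBinom{m}{i}{w}\,y^{j-n}x^{-j}\bigr)\,x^jy^{n-j}$, which is exactly the conjugated factor in \eqref{eqn:wdconv1id}, and leaves behind the purely $x,y$ word $x^j y^{n-j}x^i y^{m-i}$. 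Second, I would normal-order this word: Lemma~\ref{lem:com} (valid for all integers, so $n-j$ may be negative) gives $y^{n-j}x^i=\bigl(\prod_{\ell=1}^{i}W(\ell,n-j)\bigr)x^i y^{n-j}$, and moving these big weights past the leading $x^j$ via $x\,W(s,t)=W(s+1,t)\,x$ yields $x^j y^{n-j}x^i y^{m-i}=\bigl(\prod_{\ell=1}^{i}W(\ell+j,n-j)\bigr)x^{j+i}y^{n+m-j-i}$. Setting $i=k-j$, so that $j+i=k$, and renaming the product index produces precisely the factor $\prod_{i=1}^{k-j}W(i+j,n-j)$ of \eqref{eqn:wdconv1id}.

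Collecting these reductions, the coefficient of $x^k y^{n+m-k}$ on the right is the sum over contributing index pairs $(j,i)$ with $j+i=k$. Because $k\geq 0$ forces $i=k-j\geq 0$ together with $j\geq 0$, i.e.\ $0\leq j\leq k$, this is the finite sum on the right-hand side of \eqref{eqn:wdconv1id}. This finiteness is also what makes coefficient comparison legitimate: for the fixed monomial $x^k y^{n+m-k}$ only finitely many pairs contribute (commuting $x$, $y$ and the weights preserves both total $x$- and $y$-degree), so the product of the two one-sided series is well defined in $\C_w[[x,y,y^{-1}]]$ and coefficient extraction is unambiguous; since the weight variables commute among themselves, the extracted coefficients genuinely lie in $\C[(w(s,t))_{s,t\in\Z}]$. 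Equating the two expressions for the coefficient of $x^k y^{n+m-k}$ then gives \eqref{eqn:wdconv1id}.

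I expect the sole obstacle to be bookkeeping rather than anything conceptual: one must keep the index shifts of the small weights and of the big weights $W(\cdot,\cdot)$ mutually consistent while commuting $\wBinom{m}{k-j}{w}$ and the $W(\ell,n-j)$ across the powers of $x$ and $y$, so that the conjugation $x^j y^{n-j}(\,\cdot\,)y^{j-n}x^{-j}$ and the shifted product $\prod_{i=1}^{k-j}W(i+j,n-j)$ emerge exactly as stated. There is no analytic subtlety, since for each fixed $k$ the computation is finite; this is why the argument mirrors the $n,m\geq 0$ case of \cite[Corollary~1]{Schl1}, with the extension to negative $n,m$ costing only the passage from finite sums to the one-sided power series of \eqref{eqn:extbin1}.
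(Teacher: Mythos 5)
Your proposal is correct and takes essentially the same route as the paper: the paper also obtains \eqref{eqn:wdconv1id} by expanding both $(x+y)^{n+m}$ and $(x+y)^n(x+y)^m$ via \eqref{eqn:extbin1} in $\C_w[[x,y,y^{-1}]]$ and comparing coefficients of $x^ky^{n+m-k}$, omitting the details by reference to \cite[Corollary~1]{Schl1}. Your normal-ordering computation (conjugation of $\wBinomText{m}{k-j}{w}$ by $x^jy^{n-j}$, then Lemma~\ref{lem:com} and the shift $x\,W(s,t)=W(s+1,t)\,x$) is exactly the omitted bookkeeping, carried out correctly.
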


Compared to the corresponding identity in~\cite[Corollary 1]{Schl1}, the sum
in the above identity is bounded by $k$ instead of $min(k,n)$. But since
$\wBinomText{n}{j}{w}$ is zero if $j>n\geq 0$, this makes no difference in
the case $n \geq 0$. The above identity is formal because it contains
noncommuting variables $x$ and $y$ defined by \eqref{eqn:noncommrel3}, which
can be understood to be shift operators shifting the weight-dependent binomial
coefficient in the expression
$$x^jy^{n-j}\,\wBinom{m}{k-j}{w}y^{j-n}x^{-j}.$$
Evaluating this expression will shift all weights in $\wBinomText{m}{k-j}{w}$
and afterwards $x^jy^{n-j}$ will cancel with $y^{j-n}x^{-j}$ so we obtain an
identity in $\C[(w(s,t))_{s,t\in\Z}]$ since all $x$ and $y$ vanish. 

By expanding $(x+y)^{n+m}$ and $(x+y)^n(x+y)^m$ using \eqref{eqn:extbin2}
and comparing coefficients as before, we obtain a second $w$-Chu--Vandermonde
convolution formula.
\begin{corollary}\label{cor:wdconv2}
 Let $n,m \in \Z$ and $k \leq n+m$.
 For the $w$-binomial coefficients in \eqref{def:wbineq}, defined by the
 doubly-indexed sequence of invertible variables $(w(s,t))_{s,t\in\Z}$,
 we have the following formal identity in $\C[(w(s,t))_{s,t\in\Z}]$:
 \begin{equation}\label{eqn:wdconv2id}
 	\wBinom{n+m}{k}{w}=\sum_{j=k-m}^{n}
 	\wBinom{n}{j}{w}
 	\left(x^jy^{n-j}\,
 	\wBinom{m}{k-j}{w}
 	y^{j-n}x^{-j}\right)
 	\prod_{i=1}^{k-j}W(i+j,n-j).
 \end{equation}
\end{corollary}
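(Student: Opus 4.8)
The plan is to closely mirror the derivation of Corollary~\ref{cor:wdconv1}, but with the expansion \eqref{eqn:extbin1} replaced throughout by the second expansion \eqref{eqn:extbin2}. Working in $\C_w[[x,x^{-1},y]]$, I would first expand the left-hand side directly by Theorem~\ref{thm:extbinomial},
\[(x+y)^{n+m}=\sum_{k\leq n+m}\wBinom{n+m}{k}{w}\,x^ky^{n+m-k},\]
and then expand the product $(x+y)^n(x+y)^m$ by applying \eqref{eqn:extbin2} to each factor. Since $x+y$ is invertible in $\C_w[[x,x^{-1},y]]$, the relation $(x+y)^n(x+y)^m=(x+y)^{n+m}$ holds for all integers $n,m$, so the two expressions agree and the identity will follow by comparing the coefficients of $x^ky^{n+m-k}$.

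The core computation is the reduction of a generic term of the product to normal form. Multiplying the two series produces terms $\wBinom{n}{j}{w}\,x^jy^{n-j}\,\wBinom{m}{\ell}{w}\,x^\ell y^{m-\ell}$ with $j\leq n$ and $\ell\leq m$. Commuting all $y$'s to the right of all $x$'s changes the weights but sends the net $x$- and $y$-degrees to $j+\ell$ and $n+m-j-\ell$, so only the terms with $j+\ell=k$, that is $\ell=k-j$, contribute to the coefficient of $x^ky^{n+m-k}$. I would then conjugate, writing
\[x^jy^{n-j}\wBinom{m}{k-j}{w}=\Bigl(x^jy^{n-j}\wBinom{m}{k-j}{w}y^{j-n}x^{-j}\Bigr)x^jy^{n-j},\]
which is precisely the shift operator appearing in \eqref{eqn:wdconv2id}, and finally use Lemma~\ref{lem:com} to commute $y^{n-j}$ past $x^{k-j}$ and the relation \eqref{subeqn:rel4}, iterated, to move $x^j$ past the resulting big weights. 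Together these produce the factor $\prod_{i=1}^{k-j}W(i+j,n-j)$ and strip the term down to $x^ky^{n+m-k}$, so that matching coefficients yields exactly the summand of \eqref{eqn:wdconv2id}.

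It remains to identify the range of summation, which is the only structural difference from Corollary~\ref{cor:wdconv1}. Since \eqref{eqn:extbin2} forces $j\leq n$ in the first factor and $\ell=k-j\leq m$ in the second, the index $j$ runs over $k-m\leq j\leq n$; the hypothesis $k\leq n+m$ guarantees both that this interval is nonempty and that $x^ky^{n+m-k}$ actually occurs in the expansion of $(x+y)^{n+m}$.

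The main point I expect to require care is formal rather than algebraic. Under \eqref{eqn:extbin2} each factor is supported on arbitrarily large powers of $y$ (equivalently, arbitrarily negative powers of $x$), so one must check that the product is a well-defined element of $\C_w[[x,x^{-1},y]]$ and that $[x^ky^{n+m-k}]$ extracts a finite sum. This is guaranteed because the constraint $j+\ell=k$ together with $j\leq n$ and $\ell\leq m$ confines $j$ to the finite interval $[k-m,n]$. The only remaining labor is the noncommutative bookkeeping of the weight shifts, which proceeds exactly as in the omitted proofs of Corollary~\ref{cor:wdconv1} and \cite[Corollary~1]{Schl1}.
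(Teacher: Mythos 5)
Your proposal is correct and follows essentially the same route as the paper: the authors also derive \eqref{eqn:wdconv2id} by expanding $(x+y)^{n+m}=(x+y)^n(x+y)^m$ via \eqref{eqn:extbin2} in $\C_w[[x,x^{-1},y]]$ and comparing coefficients of $x^ky^{n+m-k}$, omitting the normal-ordering details that you carry out explicitly (and correctly) with Lemma~\ref{lem:com} and relation~\eqref{subeqn:rel4}. Your identification of the summation range $k-m\leq j\leq n$ and the finiteness of the coefficient extraction likewise matches the paper's (implicit) argument.
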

If $n+m<k<0$, both sides of the equation vanish, therefore
\eqref{eqn:wdconv2id} is true for all $k<0$. This equation generalizes an
identity in \cite[Lemma 4.9]{FS}. There the corresponding identity is
limited to the case $n,m,k<0$ whereas Corollary~\ref{cor:wdconv2} is even
true if $n,m$ are positive or have mixed signs.

In fact, Corollary~\ref{cor:wdconv1} and \ref{cor:wdconv2} are equivalent. To
see the correspondence, let $k\leq n+m$ and apply Corollary~\ref{cor:wdconv1}
to $\wBinomText{n+m}{n+m-k}{w}$. Additionally, we apply the involution from
\eqref{eqn:inv1} with $\widehat{w}(s,t)=w(t,s)^{-1}$ to obtain
\begin{align*}
	&\wBinom{n+m}{n+m-k}{{\widehat{w}}}\\
	&=\sum_{j=0}^{n+m-k} \wBinom{n}{j}{{\widehat{w}}}
 	\left(y^j x^{n-j}\,
 	\wBinom{m}{n+m-k-j}{{\widehat{w}}}
 	x^{j-n}y^{-j}\right)
 	\prod_{i=1}^{n+m-k-j}\widehat{W}(i+j,n-j)\\
 	&=\sum_{j=0}^{n+m-k} \wBinom{n}{n+m-k-j}{{\widehat{w}}}
 	\left(x^{-m+k+j} y^{n+m-k-j}\,
 	\wBinom{m}{j}{{\widehat{w}}}
 	y^{-n-m+k+j}x^{m-k-j}\right)
 	\\
 	&\quad \times \prod_{i=1}^{j}\widehat{W}(i+n+m-k-j,-m+k+j)\\
 	&=\sum_{j=k-m}^{n} \wBinom{n}{n-j}{{\widehat{w}}}
 	\left(x^{j} y^{n-j}\,
 	\wBinom{m}{m-k+j}{{\widehat{w}}}
 	y^{j-n}x^{-j}\right)
 	\prod_{i=1}^{m-k+j}\widehat{W}(i+n-j,j).
 \end{align*}
 Then we apply the reflection formula from Theorem~\ref{thm:refl1} on both
 sides to obtain
\begin{align*}
&\wBinom{n+m}{k}{w} \left( \prod_{i=1}^{k}W(i,n+m-k)^{-1} \right)
   =\sum_{j=k-m}^{n} \wBinom{n}{j}{w}
   \left( \prod_{i=1}^{j}W(i,n-j)^{-1} \right) \\
&\times\left(x^j y^{n-j}\,
 	\wBinom{m}{k-j}{w} \left( \prod_{i=1}^{m-k+j}W(i,m-k+j)^{-1} \right)
 	y^{j-n}x^{-j}\right)
 	\prod_{i=1}^{m-k+j}\widehat{W}(i+n-j,j)
\end{align*}
It is a straightforward calculation to see that this expression is
equivalent to Corollary~\ref{cor:wdconv2}.

We can apply the same method to the second and third weight-dependent
binomial convolution formula in \cite{Schl1} to see that these two identities
(extended to integers) are also equivalent to Corollary~\ref{cor:wdconv1}.
There we can use the reflection formulae from Theorem~\ref{thm:refl2} and
the involutions from \eqref{eqn:inv3} and \eqref{eqn:inv6}. The equivalence of
the three convolution formulae in \cite{Schl1} also explains why the elliptic
case of all three formulae are variants of the same elliptic hypergeometric
summation formula, namely Frankel and Turaev's ${}_{10}V_9$ summation. 

\subsection{Combinatorial interpretation}\label{sec:ref_comb}
In Theorem~\ref{thm:hlp} we interpret $w$-binomial coefficients in terms of
lattice paths. If the signs of $n$ and $m$ are identical,
Corollaries~\ref{cor:wdconv1}~and~\ref{cor:wdconv2} translate into
convolutions of paths. If $n,m \geq 0$, both identities translate to
convolutions corresponding to a diagonal in the first quadrant (see
Figure~\ref{fig:convolution1}). A weighted path counted by
$\wBinomText{n+m}{k}{w}$, which crosses the point $(j,n-j)$ for some
$0 \leq j \leq k$, can be decomposed into three parts: A weighted path from
$(0,0)$ to $(j,n-j)$ (counted by $\wBinomText{n}{j}{w}$), a weighted path
from $(j,n-j)$ to $(k,n+m-k)$ (counted by
$x^j y^{n-j} \wBinomText{m}{k-j}{w} y^{j-n} x^{-j}$) and the weights
corresponding to the rectangle between the $x$-axis and the second path
(which contributes $\prod_{i=1}^{k-j}W(i+j,n-j)$). By summing over all
$0 \leq j \leq k$ we obtain the convolution.
\begin{center}
\begin{figure}
\begin{minipage}{0.32\textwidth}
	\centering
	
	\begin{tikzpicture}[scale=0.6]
		\draw[->,>=latex] (0,-0.5) -- (0,7.5);
		\draw[->,>=latex] (-0.5,0) -- (6.5,0);
		\draw[color=myblue, line width=2pt] (0,0) -- (1,0) -- (1,1) -- (1,2) -- (2,2) -- (2,3) -- (2,4) -- (3,4) -- (4,4) -- (4,6);
		\foreach \x in {0,...,6}
		\foreach \y in {0,...,7}
		{\draw[fill=black](\x,\y)circle(2pt);}
		\node[above left=-0.1, color=black] at (0,0) {\footnotesize $(0,0)$};
		\node[above right=-0.1, color=black] at (4,6) {\footnotesize $(k,n+m-k)$};
		\node[above left=-0.1, color=black] at (0,5) {\footnotesize $(0,n)$};
		\node[below right=-0.1, color=black] at (4,1) {\footnotesize $(k,n-k)$};
		\node[above right=-0.1, color=black] at (2,3) {\footnotesize \color{myblue} $(j,n-j)$};
		\draw[fill=black,color=black](0,0)circle(4pt);
		\draw[fill=black,color=black](4,6)circle(4pt);
		\draw[fill=black,color=black](0,5)circle(4pt);
		\draw[fill=black,color=black](4,1)circle(4pt);
		\draw (0,5) -- (4,1);
		\draw[fill=myblue,color=myblue](2,3)circle(4pt);
	\end{tikzpicture}

\end{minipage}
\hfill
\begin{minipage}{0.1\textwidth} \centering
	$\leftrightarrow$
\end{minipage}
\hfill
\begin{minipage}{0.55\textwidth}\centering
	\begin{tikzpicture}[scale=0.6]
	\path[fill=green!20] (2,0) rectangle (4,3) node[pos=.5, color=black] {};
	\path[pattern=horizontal lines, pattern color=green!30] (4,0) rectangle (4,3) node[pos=.5, color=black] {};
	\path[fill=myblue!10] (0,0) rectangle (2,3) node[pos=.5, color=black] {};
	\path[pattern=north west lines, pattern color=myblue!30] (0,0) rectangle (2,3) node[pos=.5, color=black] {};
	\path[fill=myred!10] (2,3) rectangle (4,6) node[pos=.5, color=black] {};
	\path[pattern=north east lines, pattern color=myred!30] (2,3) rectangle (4,6) node[pos=.5, color=black] {};
	\draw[color=myblue, line width=2pt] (0,0) -- (1,0) -- (1,1) -- (1,2) -- (2,2) -- (2,3);
	\draw[color=myred, line width=2pt] (2,3) -- (2,4) -- (3,4) -- (4,4) -- (4,6);
	\draw[fill=black,color=black](0,0)circle(3pt);
	\draw[fill=black,color=black](4,6)circle(3pt);
	\draw[fill=myblue,color=black](2,3)circle(3pt);
	\node[right, color=black] at (4,1.5) {\footnotesize ${\displaystyle \prod_{i=1}^{k-j}W(i+j,n-j)}$};
	\node[right, color=black] at (4,4.5) {\footnotesize $x^j y^{n-j} \wBinom{m}{k-j}{w} y^{j-n} x^{-j}$};
	\node[left, color=black] at (0,1.5) {\footnotesize $\wBinom{n}{j}{w}$};
	\end{tikzpicture}
\end{minipage}
\caption{For $n,m \geq 0$, Corollary~\ref{cor:wdconv1} translates into
  a convolution of paths.}
	\label{fig:convolution1}
\end{figure}
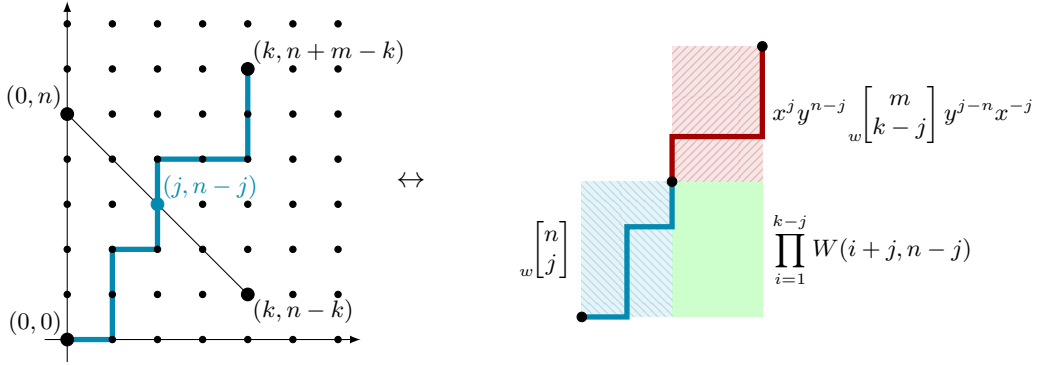
\end{center}
If $n,m < 0$, Corollary~\ref{cor:wdconv1} translates to a convolution
corresponding to a diagonal in the fourth quadrant and
Corollary~\ref{cor:wdconv2} to a diagonal in the second quadrant
(see Figure~\ref{fig:convolution2}). The decomposition is similar
to the $n,m\geq 0$ case.
\begin{center}
\begin{figure}
	\begin{minipage}{0.3\textwidth}
		\centering
	\begin{tikzpicture}[scale=0.6]
			\path[fill=green!20] (2,0) rectangle (4,-3) node[pos=.5, color=black] {};
			\path[fill=myblue!10] (0,0) rectangle (2,-3) node[pos=.5, color=black] {};
			\path[pattern=north west lines, pattern color=myblue!30] (0,0) rectangle (2,-3) node[pos=.5, color=black] {};
			\path[fill=myred!10] (2,-3) rectangle (4,-6) node[pos=.5, color=black] {};
			\path[pattern=north east lines, pattern color=myred!30] (2,-3) rectangle (4,-6) node[pos=.5, color=black] {};
			\draw[->,>=latex] (0,0.5) -- (0,-7.5);
			\draw[->,>=latex] (-0.5,0) -- (6.5,0);
			\draw (0,-5) -- (4,-1);
			\draw[color=myblue,line width=2pt] (0,0) -- (0,-1) -- (1,-1) -- (1,-2) -- (2,-2) -- (2,-3); 
			\draw[color=myred, line width=2pt] (2,-3)-- (2,-4) -- (3,-4) -- (3,-5) -- (4,-5) -- (4,-6);
			\foreach \x in {0,...,6}
			\foreach \y in {-7,...,0}
			{\draw[fill=black](\x,\y)circle(2pt);}
			\draw[fill=myblue,color=black](2,-3)circle(4pt);
			\node[below left=-0.1, color=black] at (0,0) {\footnotesize $(0,0)$};
			\node[below right=-0.1, color=black] at (4,-6) {\footnotesize $(k,n+m-k)$};
			\node[below left=-0.1, color=black] at (0,-5) {\footnotesize $(0,n)$};
			\node[above right=-0.1, color=black] at (4,-1) {\footnotesize $(k,n-k)$};
			\draw[fill=black,color=black](0,0)circle(4pt);
			\draw[fill=black,color=black](4,-6)circle(4pt);
			\draw[fill=black,color=black](0,-5)circle(4pt);
			\draw[fill=black,color=black](4,-1)circle(4pt);
			\node[below right=-0.1, color=black] at (2,-3) {\footnotesize \color{black} $(j,n-j)$};
		\end{tikzpicture}
	\end{minipage}
	\hfill
	\begin{minipage}{0.65\textwidth}\centering
		\begin{tikzpicture}[scale=0.6]
			\path[fill=green!20] (-5,0) rectangle (-8,3) node[pos=.5, color=black] {};
			\path[fill=myblue!10] (0,0) rectangle (-5,3) node[pos=.5, color=black] {};
			\path[pattern=north west lines, pattern color=myblue!30] (0,0) rectangle (-5,3) node[pos=.5, color=black] {};
			\path[fill=myred!10] (-5,3) rectangle (-8,4) node[pos=.5, color=black] {};
			\path[pattern=north east lines, pattern color=myred!30] (-5,3) rectangle (-8,4) node[pos=.5, color=black] {};
			\draw[->,>=latex] (0,-0.5) -- (0,6.5);
			\draw[->,>=latex] (0.5,0) -- (-9.5,0);
			\draw[color=myblue,line width=2pt] (0,0) -- (-1,0) -- (-1,1) -- (-2,1) -- (-2,2) -- (-3,2) -- (-4,2) -- (-4,3) -- (-5,3);
			\draw[color=myred,line width=2pt] (-5,3) -- (-6,3) -- (-7,3) -- (-7,4) -- (-8,4) ;
			\foreach \x in {-9,...,0}
			\foreach \y in {0,...,6}
			{\draw[fill=black](\x,\y)circle(2pt);}
			\node[below right=-0.1, color=black] at (0,0) {\footnotesize $(0,0)$};
			\node[above right=-0.1, color=black] at (-6,4) {\footnotesize $(k-m,n+m-k)$};
			\node[below right=-0.1, color=black] at (-2,0) {\footnotesize $(n,0)$};
			\node[above=-0.08, color=black] at (-8,4) {\footnotesize $(k,n+m-k)$};
			\node[above right=-0.1, color=black] at (-5,3) {\footnotesize \color{black}$(j,n-j)$};
			\draw[fill=black,color=black](0,0)circle(4pt);
			\draw[fill=black,color=black](-8,4)circle(4pt);
			\draw (-2,0) -- (-6,4);
			\draw[fill=black,color=black](-2,0)circle(4pt);
			\draw[fill=black,color=black](-6,4)circle(4pt);
			\draw[fill=myblue,color=black](-5,3)circle(4pt);
		\end{tikzpicture}
	\end{minipage}
\caption{For $n,m<0$, Corollary~\ref{cor:wdconv1} (left) and
  Corollary~\ref{cor:wdconv2} (right) translate into convolutions of paths.}
	\label{fig:convolution2}
\end{figure}
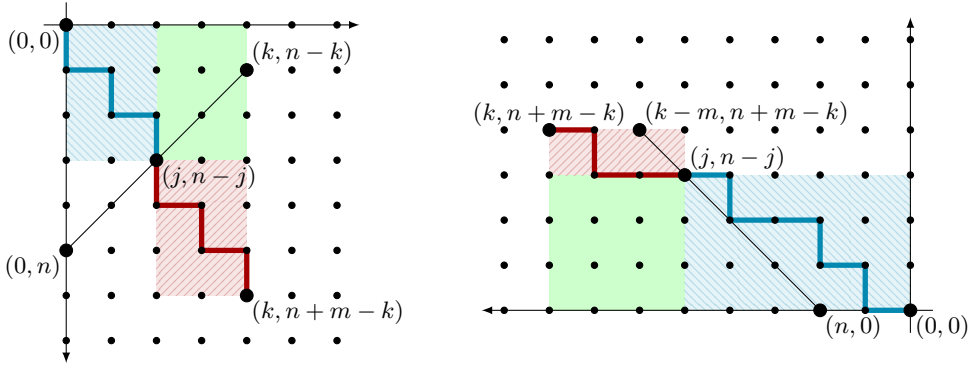
\end{center}
If $n$ and $m$ have mixed signs, the combinatorial interpretation is less
obvious, but we can give a combinatorial proof of the Corollaries by defining
a sign-reversing involution. Since both Corollaries are equivalent, we will
only give a combinatorial proof of Corollary~\ref{cor:wdconv1}. Note that
constructing an involution to prove Corollary~\ref{cor:wdconv2} works very
similar. 

Let $k\geq 0$, $n,m \in \Z$, $\mathrm{sgn}(n) \neq \mathrm{sgn}(m)$. Then
we define $P(n,m,k)$ to be the set of tuples of weighted hybrid lattice
paths $(P_1,P_2)$, where the first path, $P_1$, goes from $(0,0)$ to
$(j,n-j)$ for some $0 \leq j \leq k$ and the second path, $P_2$, goes from
$(j,n-j)$ to $(k,n+m-k)$. (For the second path it is assumed that the origin
is shifted to $(j,n-j)$, such that the path is not restricted to the same
step set as $P_1$.) Recall the weight-assignment to hybrid lattice
paths~\eqref{eqn:area} from Remark~\ref{rem:area}. Let $\mathcal{C}(P)$
be the collection of cells between the $x$-axis and the two paths.
We define the weight of an element $(P_1,P_2) \in P(n,m,k)$ as
\begin{equation*}
  w((P_1,P_2))= (-1)^{S(P_1,P_2)}
  \prod_{(i,j) \in \mathcal{C}(P)} w(i,j)^{\mathrm{sgn}((i,j))}
\end{equation*}
where $\mathrm{sgn}((i,j))=\mathrm{sgn}(i-1)\mathrm{sgn}(j-1)$ and
$S(P_1,P_2)$ is the number of east-south step combinations in either
$P_1$ or $P_2$. See Figures~\ref{fig:involution2} and \ref{fig:involution1}
for examples, where the cells in $\mathcal{C}(P)$ are shaded. By construction,
the sum over all weights of tuples in $P(n,m,k)$ is given by the right hand
side of Corollary~\ref{cor:wdconv1}.
Let $Q(n,m,k)$ be the subset of $P(n,m,k)$ where, after deleting all
intersecting north and south steps of $P_1$ and $P_2$, one of the paths is
completely deleted and we are left with a valid hybrid lattice path from
$(0,0)$ to $(k,n+m-k)$ and an empty path. See Figure~\ref{fig:involution2}
for three examples. The sum over all weights of tuples in $Q(n,m,k)$ is
given by the left hand side of Corollary~\ref{cor:wdconv1}, since in this
case the weighting corresponds to the weighting of hybrid lattice paths
\eqref{eqn:area}. In order to give a combinatorial interpretation of the
Corollary, we will define a sign-reversing involution $\iota$ on $P(n,m,k)$
whose fixed-point set is $Q(n,m,k)$.
\begin{center}
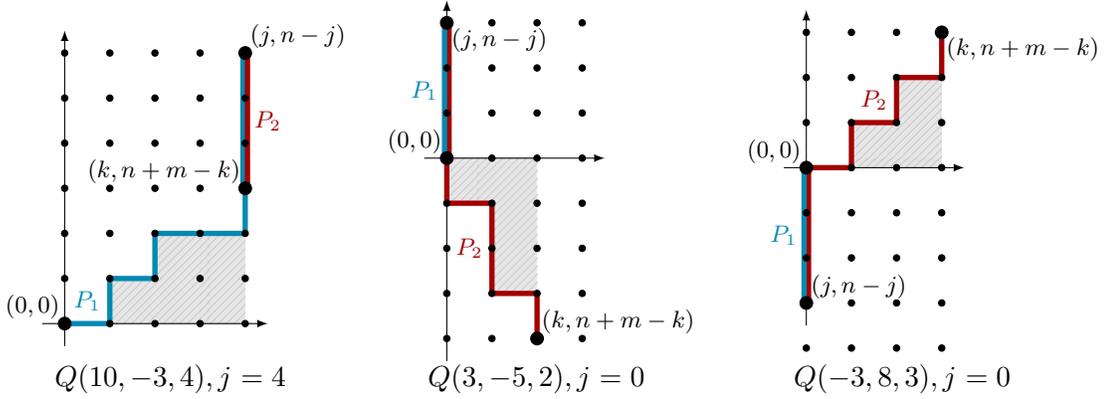
\begin{figure}
\begin{minipage}{0.32\textwidth}
	\centering
	\begin{tikzpicture}[scale=0.6]
		\path[fill=gray!20] (1,0) rectangle (4,1) node[pos=.5, color=black] {};
		\path[pattern=north east lines, pattern color=gray!40] (1,0) rectangle (4,1) node[pos=.5, color=black] {};
		\path[fill=gray!20] (2,1) rectangle (4,2) node[pos=.5, color=black] {};
		\path[pattern=north east lines, pattern color=gray!40] (2,1) rectangle (4,2) node[pos=.5, color=black] {};	
		\draw[->,>=latex] (0,-0.5) -- (0,6.5);
		\draw[->,>=latex] (-0.5,0) -- (4.5,0);
		\draw[color=myblue, line width=2pt] (0,0) -- (1,0) -- (1,1) -- (2,1) -- (2,2) -- (4,2) -- (4,3) -- (3.95,3) -- (3.95,6);
		\draw[color=myred, line width=2pt] (4.05,6) -- (4.05,3);
		\foreach \x in {0,...,4}
		\foreach \y in {0,...,6}
		{\draw[fill=black](\x,\y)circle(2pt);}
		\node[above left=-0.1, color=black] at (0,0) {\footnotesize $(0,0)$};
		\node[above left=-0.1, color=black] at (4,3) {\footnotesize $(k,n+m-k)$};
		\node[above right=-0.1, color=black] at (4,6) {\footnotesize \color{black} $(j,n-j)$};
		\node[color=myblue] at (0.5,0.5) {\footnotesize $P_1$};
		\node[color=myred] at (4.5,4.5) {\footnotesize $P_2$};
		\draw[fill=black,color=black](0,0)circle(4pt);
		\draw[fill=black,color=black](4,3)circle(4pt);
		\draw[fill=myblue,color=black](4,6)circle(4pt);
	\end{tikzpicture}
\end{minipage}
\hfill
\begin{minipage}{0.32\textwidth} \centering
	\begin{tikzpicture}[scale=0.6]
		\path[fill=gray!20] (0,0) rectangle (1,-1) node[pos=.5, color=black] {};
		\path[pattern=north east lines, pattern color=gray!40] (0,0) rectangle (1,-1) node[pos=.5, color=black] {};
		\path[fill=gray!20] (1,0) rectangle (2,-3) node[pos=.5, color=black] {};
		\path[pattern=north east lines, pattern color=gray!40] (1,0) rectangle (2,-3) node[pos=.5, color=black] {};		
		\draw[->,>=latex] (0,-4.5) -- (0,3.5);
		\draw[->,>=latex] (-0.5,0) -- (3.5,0);	
		\draw[color=myred, line width=2pt] (0.05,3) -- (0.05,0) -- (0,0) -- (0,-1) -- (1,-1) -- (1,-3) -- (2,-3) -- (2,-4);
		\draw[color=myblue, line width=2pt] (-0.05,0) -- (-0.05,3);	
		\foreach \x in {0,...,3}
		\foreach \y in {-4,...,3}
		{\draw[fill=black](\x,\y)circle(2pt);}
		\node[above left=-0.1, color=black] at (0,0) {\footnotesize $(0,0)$};
		\node[above right=-0.1, color=black] at (2,-4) {\footnotesize $(k,n+m-k)$};
		\node[below right=-0.1, color=black] at (0,3) {\footnotesize \color{black} $(j,n-j)$};
		\node[color=myblue] at (-0.5,1.5) {\footnotesize $P_1$};
		\node[color=myred] at (0.5,-2) {\footnotesize $P_2$};
		\draw[fill=black,color=black](0,0)circle(4pt);
		\draw[fill=black,color=black](2,-4)circle(4pt);
		\draw[fill=myblue,color=black](0,3)circle(4pt);
	\end{tikzpicture}
\end{minipage}
\hfill
\begin{minipage}{0.32\textwidth}\centering
	\begin{tikzpicture}[scale=0.6]
		\path[fill=gray!20] (1,0) rectangle (2,1) node[pos=.5, color=black] {};
		\path[pattern=north east lines, pattern color=gray!40] (1,0) rectangle (2,1) node[pos=.5, color=black] {};
		\path[fill=gray!20] (2,0) rectangle (3,2) node[pos=.5, color=black] {};
		\path[pattern=north east lines, pattern color=gray!40] (2,0) rectangle (3,2) node[pos=.5, color=black] {};	
		\draw[->,>=latex] (0,-3.5) -- (0,3.5);
		\draw[->,>=latex] (-0.5,0) -- (3.5,0);		
		\draw[color=myred, line width=2pt] (0.05,-3) -- (0.05,0) -- (1,0) -- (1,1) -- (2,1) -- (2,2) -- (3,2) -- (3,3);
		\draw[color=myblue, line width=2pt] (-0.05,0) -- (-0.05,-3);				
		\foreach \x in {0,...,3}
		\foreach \y in {-4,...,3}
		{\draw[fill=black](\x,\y)circle(2pt);}
		\node[above left=-0.1, color=black] at (0,0) {\footnotesize $(0,0)$};
		\node[below right=-0.1, color=black] at (3,3) {\footnotesize $(k,n+m-k)$};
		\node[above right=-0.1, color=black] at (0,-3) {\footnotesize \color{black} $(j,n-j)$};
		\node[color=myblue] at (-0.5,-1.5) {\footnotesize $P_1$};
		\node[color=myred] at (1.5,1.5) {\footnotesize $P_2$};
		\draw[fill=black,color=black](0,0)circle(4pt);
		\draw[fill=black,color=black](3,3)circle(4pt);
		\draw[fill=myblue,color=black](0,-3)circle(4pt);
	\end{tikzpicture}
\end{minipage}
\begin{minipage}{0.32\textwidth}\centering
	$Q(10,-3,4), j=4$
\end{minipage}
\hfill
\begin{minipage}{0.32\textwidth}\centering
	$Q(3,-5,2), j=0$
\end{minipage}
\hfill
\begin{minipage}{0.32\textwidth}\centering
	$Q(-3,8,3), j=0$
\end{minipage}
\caption{Three pairs of paths $(P_1,P_2)$ in the fixed-point set.}
\label{fig:involution2}
\end{figure}
\end{center}
Consider the case $m<0\leq n$. Given an element $(P_1,P_2)$ of $P(n,m,k)$,
$P_1$ ending at $(j,n-j)$, we look at the last east step, $e_1$, of $P_1$ and
the first east step, $e_2$, of $P_2$. If both paths have an east step, we
construct a new pair of paths $(P_1',P_2')$ as follows. If $e_1$ is at the
same height or below $e_2$ (see the right side of Figure~\ref{fig:involution1}
for an example), then $P_1'$ starts by following $P_1$ until $e_1$. $P_1'$ is
following $e_1$ and afterwards it's going north until it reaches $e_2$.
$P_1'$ ends by following $e_2$ and by adding north steps until the path
reaches $(j+1,n-j-1)$. The path $P_2'$ begins at $(j+1,n-j-1)$ with south
steps until it reaches $P_2$ and then follows all remaining steps of $P_2$. 
If $e_1$ lies above $e_2$ (see the left side of Figure~\ref{fig:involution1}
for an example), then $P_1'$ is the path that follows $P_1$ until the starting
point of $e_1$. Instead of following $e_1$, $P_1'$ is going north until it
reaches $(j-1,n-j+1)$. $P_2'$ then starts at $(j-1,n-j+1)$ with south steps
until it reaches $e_1$. $P_2'$ follows $e_1$ and afterwards it follows all
remaining steps of $P_2$ until it reaches $(k,n+m-k)$.
If $P_1$ does not have an east step and $e_2$ is at height $0$ or higher, we
construct $P_1'$ and $P_2'$ as in the case when $e_1$ is weakly below $e_2$
(so $e_2$ will be part of $P_1'$). If $P_2$ does not have an east step and
$e_1$ is at height $n+m-k+1$ or higher, we construct $P_1'$ and $P_2'$ as in
the case when $e_1$ is above $e_2$ (so $e_1$ will be part of $P_2$). In all
other cases, $(P_1,P_2) \in Q(n,m,k)$ and we set $(P_1',P_2')=(P_1,P_2)$. 
Let $\iota$ be the map that maps $(P_1,P_2)$ to $(P_1',P_2')$. By
construction, $(P_1',P_2')$ is an element of $P(n,m,k)$. Also by construction,
$\iota^2=id$, so $\iota$ is an involution. It is not hard to check that
$\iota$ is weight-preserving, while $P_2'$ has one more or one less
east-south step combination than $P_2$ (see Figure~\ref{fig:involution1}
for an example), except if $(P_1,P_2)$ is in $Q(n,m,k)$. This makes $\iota$
sign-reversing outside of the fixed-point set $Q(n,m,k)$. We therefore are
finished with the case $m<0 \leq n$.
\begin{center}
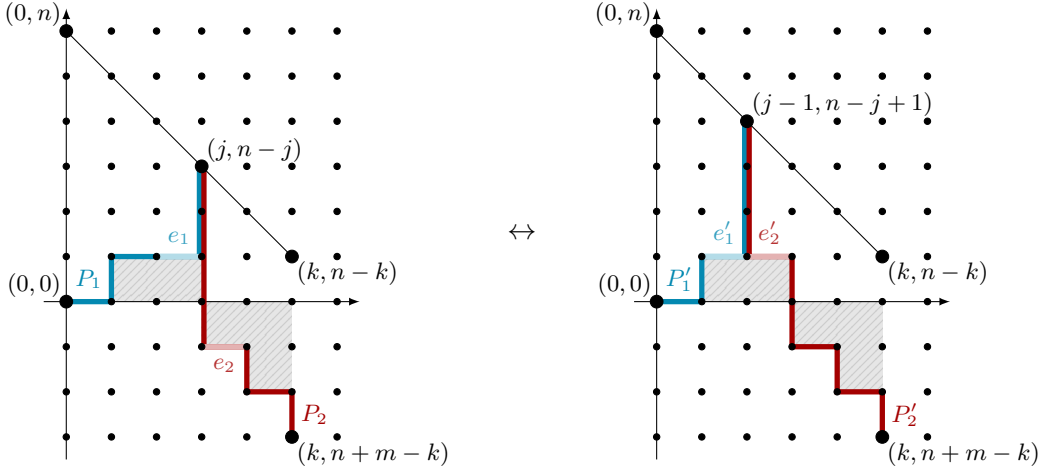
\begin{figure}
\begin{minipage}{0.45\textwidth}
	\centering
	\begin{tikzpicture}[scale=0.6]
		\path[fill=gray!20] (1,0) rectangle (3,1) node[pos=.5, color=black] {};
		\path[pattern=north east lines, pattern color=gray!40] (1,0) rectangle (3,1) node[pos=.5, color=black] {};
		\path[fill=gray!20] (3,0) rectangle (5,-1) node[pos=.5, color=black] {};
		\path[pattern=north east lines, pattern color=gray!40] (3,0) rectangle (5,-1) node[pos=.5, color=black] {};
		\path[fill=gray!20] (4,-1) rectangle (5,-2) node[pos=.5, color=black] {};
		\path[pattern=north east lines, pattern color=gray!40] (4,-1) rectangle (5,-2) node[pos=.5, color=black] {};
		\draw[->,>=latex] (0,-3.5) -- (0,6.5);
		\draw[->,>=latex] (-0.5,0) -- (6.5,0);		
		\draw[color=myblue, line width=2pt] (0,0) -- (1,0) -- (1,1) -- (2,1) -- (2.95,1) -- (2.95,2) -- (2.95,3);
		\draw[color=myred, line width=2pt] (3.05,3) -- (3.05,2) -- (3.05,1) -- (3.05,0) -- (3.05,-1) -- (4,-1) -- (4,-2) -- (5,-2) -- (5,-3);
		\draw[color=myblue!30,line width=2pt] (2,1) -- node[above, color=myblue!80] {\footnotesize $e_1$} (3,1) ;
		\draw[color=myred!30,line width=2pt] (3,-1) -- node[below, color=myred!80] {\footnotesize $e_2$} (4,-1) ;
		\foreach \x in {0,...,6}
		\foreach \y in {-3,...,6}
		{\draw[fill=black](\x,\y)circle(2pt);}
		\node[above left=-0.1, color=black] at (0,0) {\footnotesize $(0,0)$};
		\node[below right=-0.1, color=black] at (5,-3) {\footnotesize $(k,n+m-k)$};
		\node[above left=-0.1, color=black] at (0,6) {\footnotesize $(0,n)$};
		\node[below right=-0.1, color=black] at (5,1) {\footnotesize $(k,n-k)$};
		\node[above right=-0.1, color=black] at (3,3) {\footnotesize \color{black} $(j,n-j)$};
		\node[color=myblue] at (0.5,0.5) {\footnotesize $P_1$};
		\node[color=myred] at (5.5,-2.5) {\footnotesize $P_2$};
		\draw[fill=black,color=black](0,0)circle(4pt);
		\draw[fill=black,color=black](5,-3)circle(4pt);
		\draw[fill=black,color=black](0,6)circle(4pt);
		\draw[fill=black,color=black](5,1)circle(4pt);
		\draw (0,6) -- (5,1);
		\draw[fill=myblue,color=black](3,3)circle(4pt);
	\end{tikzpicture}
\end{minipage}
\hfill
\begin{minipage}{0.03\textwidth} \centering
	$\leftrightarrow$
\end{minipage}
\hfill
\begin{minipage}{0.45\textwidth}\centering
	\begin{tikzpicture}[scale=0.6]
		\path[fill=gray!20] (1,0) rectangle (3,1) node[pos=.5, color=black] {};
		\path[pattern=north east lines, pattern color=gray!40] (1,0) rectangle (3,1) node[pos=.5, color=black] {};
		\path[fill=gray!20] (3,0) rectangle (5,-1) node[pos=.5, color=black] {};
		\path[pattern=north east lines, pattern color=gray!40] (3,0) rectangle (5,-1) node[pos=.5, color=black] {};
		\path[fill=gray!20] (4,-1) rectangle (5,-2) node[pos=.5, color=black] {};
		\path[pattern=north east lines, pattern color=gray!40] (4,-1) rectangle (5,-2) node[pos=.5, color=black] {};	
		\draw[->,>=latex] (0,-3.5) -- (0,6.5);
		\draw[->,>=latex] (-0.5,0) -- (6.5,0);	
		\draw[color=myblue, line width=2pt] (0,0) -- (1,0) -- (1,1) -- (1.95,1) -- (1.95,4);
		\draw[color=myred, line width=2pt] (2.05,4) -- (2.05,1) -- (3,1) -- (3,0) -- (3,-1) -- (4,-1) -- (4,-2) -- (5,-2) -- (5,-3);
		\draw[color=myblue!30,line width=2pt] (1,1) -- node[above, color=myblue!80] {\footnotesize $e_1'$} (2,1) ;
		\draw[color=myred!30,line width=2pt] (2,1) -- node[above, color=myred!80] {\footnotesize $e_2'$} (3,1) ;
		\foreach \x in {0,...,6}
		\foreach \y in {-3,...,6}
		{\draw[fill=black](\x,\y)circle(2pt);}
		\node[above left=-0.1, color=black] at (0,0) {\footnotesize $(0,0)$};
		\node[below right=-0.1, color=black] at (5,-3) {\footnotesize $(k,n+m-k)$};
		\node[above left=-0.1, color=black] at (0,6) {\footnotesize $(0,n)$};
		\node[below right=-0.1, color=black] at (5,1) {\footnotesize $(k,n-k)$};
		\node[above right=-0.1, color=black] at (2,4) {\footnotesize \color{black} $(j-1,n-j+1)$};
		\node[color=myblue] at (0.5,0.5) {\footnotesize $P_1'$};
		\node[color=myred] at (5.5,-2.5) {\footnotesize $P_2'$};
		\draw[fill=black,color=black](0,0)circle(4pt);
		\draw[fill=black,color=black](5,-3)circle(4pt);
		\draw[fill=black,color=black](0,6)circle(4pt);
		\draw[fill=black,color=black](5,1)circle(4pt);
		\draw (0,6) -- (5,1);
		\draw[fill=myblue,color=black](2,4)circle(4pt);
	\end{tikzpicture}
\end{minipage}
\caption{Visualization of the involution on a tuple of paths
  $(P_1,P_2) \in P(6,-4,5)$ and $j=3$. Both tuples of paths have the same
  weight (marked with gray diagonal lines), while $P_2'$ has one more east
  south step combination than $P_2$, so the sign is reversed.}
\label{fig:involution1}	
\end{figure}
\end{center}

The case $n<0\leq m$ works very similar and is therefore left to the reader.

\subsection{Hybrid Sets}\label{sec:hybridsets}
While it is classical that binomial coefficients count the number of lattice
paths, it is also classical that binomial coefficients $\binom{n}{k}$ with
nonnegative integer values count the number of subsets with $k$ elements of
a set with $n$ elements. In \cite{L}, Loeb showed that this interpretation
can be extended to all integers $n,k$ by a generalization of the definition
of sets. Formichella and Straub \cite{FS} introduced a $q$-weighting of these
sets.  After briefly introducing the notation of Loeb (see \cite{FS,L} for
more details and examples), we will show that there is a simple bijection
between hybrid lattice paths and subsets of hybrid new sets that preserves
the $q$-weighting. 

Let $U$ be a collection of elements, then any map $X:U \mapsto \Z$ is called a
\defn{hybrid set}. The value of $X(a)$ is called the \defn{multiplicity} of $a$
in $X$. Hybrid sets are usually denoted in the form $\hSet{\cdots}{\cdots}$,
where elements $a \in U$ with positive multiplicity are listed $X(a)$ times
before the bar and elements with negative multiplicity are listed $-X(a)$
times after the bar. The \defn{cardinality} of a hybrid set,
$|\hSet{\cdots}{\cdots}|$, is the sum of all multiplicities of the set.

For example, given $U=\{1,2,3,4,5,6\}$, the hybrid set
$\hSet{1,1,1,3,3}{2,5,5}$ contains the elements $1,2,3,5$ with multiplicity
$3,-1,2,-2$, respectively, while the elements $4,6$ have multiplicity $0$.
The cardinality of the set is $|\hSet{1,1,1,3,3}{2,5,5}|=3-1+2-2=2$.

A hybrid \defn{new set} is a hybrid set where all multiplicities are either
in $\{0,1\}$ or in $\{0,-1\}$. We define the new set $\nSet{a_l}{a_m}$ by
\begin{equation}\label{def:newset}
\nSet{a_l}{a_m}=\left\{
\begin{array}{ll}
	\hSet{a_{l},a_{l+1},\dots,a_m}{} &m>l-1\\
	\emptyset&m=l-1\\
	\hSet{}{a_{l-1},a_{l-2},\dots,a_{m+1}}&m<l-1
\end{array}\right..
\end{equation}
A hybrid set $Y$ is a \defn{subset} of a hybrid set $X$, if one can repeatedly
decrease the multiplicity of elements in $X$ with nonzero multiplicity to
obtain $Y$ or if one has removed $Y$. 

\begin{example}
  From the hybrid new set $\nSet{a_1}{a_2}=\hSet{a_1,a_2}{}$ one can remove the
  subsets $\emptyset$, $\hSet{a_1}{}$, $\hSet{a_2}{}$ and $\hSet{a_1,a_2}{}$
  to obtain the subsets $\hSet{a_1,a_2}{}$, $\hSet{a_2}{}$, $\hSet{a_1}{}$,
  $\emptyset$. So there are 4 different subsets of $\nSet{a_1}{a_2}$. 
	
  From the hybrid new set $\nSet{a_1}{a_{-2}}=\hSet{}{a_0,a_{-1}}$ one can
  remove three subsets with cardinality $2$, $\hSet{a_0,a_{0}}{}$,
  $\hSet{a_0,a_{-1}}{}$ and $\hSet{a_{-1},a_{-1}}{}$, to obtain three subsets
  with cardinality $-4$, $\hSet{}{a_{0},a_{0},a_{0},a_{-1}}$,
  $\hSet{}{a_{0},a_{0},a_{-1},a_{-1}}$, $\hSet{}{a_{0},a_{-1},a_{-1},a_{-1}}$.
  So there are 3 subsets of $\nSet{a_1}{a_{-2}}$ with cardinality $2$ and $3$
  subsets with cardinality $-4$, while $\nSet{a_1}{a_{-2}}$ has infinitely
  many subsets in total.
\end{example}

Loeb \cite{L} proved that, given a hybrid new set with cardinality $n$, the
number of subsets with cardinality $k$ is equal to the absolute value of
$\binom{n}{k}$ for all $n,k\in\Z$. Formichella and Straub \cite{FS} extended
this result to a $q$-weighted version. In the following, we will establish a
general weighted extension by showing that subsets of hybrid new sets are in
one-to-one correspondence with hybrid lattice paths.

For all $n\in \Z$ let $[n]$ be the hybrid new set 
\begin{equation*}
	[n]=
	\begin{cases} 
		\hSet{1,2,3,\dots,n}{}, & n > 0 \\ 
		\emptyset, & n = 0 \\ 
		\hSet{}{0,-1,-2,\dots,n+1}, & n < 0.
	\end{cases}
\end{equation*}
Let $Y$ be a subset of $[n]$ with $k$ elements, then we write it in a canonical form with ordered elements as follows. If $n\geq 0$, we write $Y=\{y_1,y_2,\dots,y_k| \}$, where $y_{i}\leq y_{i+1}$ and, if $n<0$, we write $Y=\{y_1,y_2,\dots,y_{k}| \}$ (for $k\geq 0$) or $Y=\{| y_1,y_2,\dots,y_{-k} \}$ (for $k<0$) with $y_{i} \geq y_{i+1}$.

We identify $k$-element subsets $Y$ of $[n]$ with hybrid lattice paths ending
at $(k,n-k)$ as follows: If $k\geq 0$, each element $y_i$ of $Y$ corresponds
to an east step ending at $(i,y_i-i)$. If $k<0$, each element $y_i$ of $Y$
corresponds to a west step starting at $(i,y_i-i)$. By connecting the
horizontal steps, the origin and the end-point $(k,n-k)$ with north or south
steps wherever possible, one can check, region by region, that we obtain a
unique hybrid lattice path from $(0,0)$ to $(k,n-k)$, since the lattice path
is determined by its horizontal steps. 
For example, the path in Figure~\ref{fig:path1} corresponds to the subset
$\hSet{1,3,5,6}{}$ of $\hSet{1,2,3,4,5,6}{}$. The left path in
Figure~\ref{fig:path2} corresponds to the subset $\hSet{0,-1}{}$ of
$\hSet{}{0,-1}$ whereas the right path corresponds to the subset
$\hSet{}{0,0,-1,-1}$ of $\hSet{}{0,-1}$.

This correspondence immediately yields
\begin{equation}
	\wBinom{n}{k}{w} = \epsilon \sum_{Y} \prod_{i=1}^{k} W(i,y_i-i),
\end{equation}
where the sum ranges over all $k$-element subsets $Y=\nSet{y_1}{y_k}$ (in
canonical form) of $[n]$ and $\epsilon= \mathrm{sgn}(k)^n \mathrm{sgn}(n)^k$.
For $w(s,t)=q$ (and $W(s,t)=q^{t}$) this reduces to the weighted model of
Formichella and Straub \cite[Theorem 4.5]{FS}, which can be stated as
\begin{equation*}
	\eBinom{n}{k}{q} = \epsilon \sum_{Y} q^{\sigma(Y)-k(k+1)/2},
\end{equation*}
where the sum ranges over all $k$-element subsets $Y=\nSet{y_1}{y_k}$ of
$[n]$ and $\sigma(Y)=\sum_{y_i \in Y} y_i$.

\section{Symmetric Functions}
In \cite{Schl1} two important specializations of the weights $w(s,t)$
involving symmetric functions were worked out in the case $n,k \geq 0$. Here
we discuss how this approach generalizes naturally to the case $n,k \in \Z$. 

In \cite{DAL} Damiani, D'Antona and Loeb define generalized complete
homogeneous symmetric functions and elementary symmetric functions over
hybrid sets. For our purposes it is sufficient to look at the following
simplified definitions. See \cite{DAL} for a more extensive discussion. 

Recall the definition of the hybrid set $\nSet{a_1}{a_n}$ from
\eqref{def:newset} and define the \defn{elementary symmetric function}
$e_k(\nSet{a_1}{a_n})$ and the \defn{complete homogeneous symmetric function}
$h_k(\nSet{a_1}{a_n})$ for $n \in \Z$ and $k\geq 0$ by
\begin{subequations}\label{def:sympos}
\begin{align} 
  \prod_{i=1}^{n} (1+a_i t) &= \sum_{k \geq 0} e_k(\nSet{a_1}{a_n}) t^k\\
\intertext{and}
\prod_{i=1}^{n} (1-a_i t)^{-1} &= \sum_{k \geq 0} h_k(\nSet{a_1}{a_n}) t^k.
\end{align}
\end{subequations}
In the spirit of Theorem~\ref{thm:extbinomial} it makes sense to extend these
definitions for all integers $n$ and $k \leq n$ or $k \leq -n$ by
\begin{subequations} \label{def:symneg}
  \begin{align}
    \prod_{i=1}^{n} (1+a_i t)&= \sum_{k \leq n} e_k(\nSet{a_1}{a_n}) t^k\\
                             \intertext{and}
\prod_{i=1}^{n} (1-a_i t)^{-1}&= \sum_{k \leq -n} h_k(\nSet{a_1}{a_n}) t^k,
  \end{align}
  \end{subequations}
  where we expand the left side in the variable $t^{-1}$. For example,
  for $n=-1$ we get:
\begin{align*}
  \prod_{i=1}^{-1} (1+a_i t)& = \frac{1}{1+a_0 t}
  = a_0^{-1}t^{-1}  \frac{1}{1+a_0^{-1} t^{-1}}\\
&= a_0^{-1}t^{-1} \sum_{k \geq 0} (-1)^k a_0^{-k} t^{-k}
 = \sum_{k \leq -1} (-1)^k a_0^{k} t^{k}.
\end{align*}
For convenience, we will also use the abbreviated notations $e_k(n)$
and $h_k(n)$, for $e_k(\nSet{a_1}{a_n})$ and $h_k(\nSet{a_1}{a_n})$,
respectively. 
The elementary and complete homogeneous symmetric functions satisfy for all
$n,k \in \Z$, provided that $(n+1,k)\neq(0,0)$, the recurrence relations 
\begin{align*}
	e_k(n+1) &= e_k(n)+a_{n+1}e_{k-1}(n) \\
	h_k(n) &= h_k(n-1)+a_{n}h_{k-1}(n)	
\end{align*}
with initial conditions
\begin{align*}
	e_0(n)=1, & \quad h_0(n) = 1 \\
	e_n(n)=\prod_{i=1}^{n} a_i, & \quad h_n(1) = \mathrm{sgn}(n) a_1^n.
\end{align*}

We can deduce directly from the definitions that for all $n,k \in \Z$
there holds (see also \cite[Proposition 1]{DAL})
\begin{subequations} \label{eq:symsym}
\begin{align}
  e_k(\nSet{a_1}{a_n})
  &=h_k(\nSet{(-a_{n+1})}{(-a_{0})})\label{eq:symsym1}\\
  e_k(\nSet{a_1}{a_n})
  &=e_{n-k}(\nSet{a_1^{-1}}{a_n^{-1}}) \prod_{i=1}^{n} a_i. \\
  h_k(\nSet{a_1}{a_n})
  &=h_{-n-k}(\nSet{a_1^{-1}}{a_n^{-1}}) (-1)^n \prod_{i=1}^{n} a_i^{-1} 		
\end{align}
\end{subequations}

\begin{remark}
  Loeb \cite{L} defined ordinary binomial coefficients for $n \in \Z$ and
  $k\geq 0$ as $\binom{n}{k}=e_{k}(\nSet{a_1}{a_n})$ with $a_i=1$ for all
  $i \in \Z$. He extended this definition for negative $k$ by an explicit
  formula containing gamma functions. With the extension \eqref{def:symneg}
  of the definition of $e_k(n)$ and $a_i=1$ for all $i\in \Z$ we now have
  the equality $\binom{n}{k}=e_{k}(\nSet{a_1}{a_n})$ which holds for all
  $n,k \in \Z$. 
\end{remark}

For nonnegative integers $n$ and $k$ the definitions \eqref{def:sympos}
correspond to the sum formulae
\begin{align}
  e_k(\{a_1,a_2,\dots,a_n\})
  &= \sum_{1\leq j_1 < j_2 < \dots < j_k \leq n} a_{j_1} a_{j_2}
    \cdots a_{j_k} \label{def:epos} \\
  h_k(\{a_1,a_2,\dots,a_n\})
  &= \sum_{1\leq j_1 \leq j_2 \leq \dots \leq j_k \leq n} a_{j_1} a_{j_2}
    \cdots a_{j_k} \label{def:hpos}
\end{align}
and $e_0(n)=h_0(n)=1$ (see for example \cite[Proposition 4.3.5]{Sa}).

This definition extends even to the case where $n$ might be negative as
follows. Analogously to the extended definition for products in \eqref{prod},
we define sums generally by

\begin{equation}\label{sum}
\sum_{j=l}^mA_j=\left\{
\begin{array}{ll}
	A_l+A_{l+1}+\cdots +A_m&m>l-1\\
	0&m=l-1\\
	-A_{l-1}-A_{l-2}-\cdots -A_{m+1}&m<l-1
\end{array}\right..
\end{equation}

Then, for $n<0$ and $k>0$, the sum in \eqref{def:epos} becomes
\begin{align*}
 \sum_{1\leq j_1 < j_2 < \dots < j_k \leq n} a_{j_1} a_{j_2} \cdots a_{j_k}
  &=\sum_{j_1 = 1}^{n-k+1} \sum_{j_2 = j_1 +1}^{n-k+2}
    \cdots \sum_{j_k = j_{k-1}+1}^{n} a_{j_1} a_{j_2} \cdots a_{j_k} \\
  &=\sum_{j_1 = n-k+2}^{0} \sum_{j_2 = n-k+3}^{j_1}
    \cdots \sum_{j_k = n+1}^{j_{k-1}} (-1)^k a_{j_1} a_{j_2} \cdots a_{j_k} \\
  &=\sum_{n+1 \leq j_k \leq j_{k-1} \leq \dots \leq j_{1} \leq 0}
    (-a_{j_1}) (-a_{j_2}) \cdots (-a_{j_k})
\end{align*}
which is by \eqref{def:hpos} equal to $h_k(\{-a_{n+1},-a_{n+2},\dots,-a_0\})$.
Indeed, also by \eqref{eq:symsym} we have
$e_k(\hSet{}{a_0,a_{-1},\dots, a_{n+1}}=
h_k(\hSet{-a_{n+1},-a_{n+2},\dots,-a_0}{})$.

\subsection{Elementary symmetric functions}
The first choice of the weights is $w(s,t)=\frac{a_{s+t}}{a_{s+t-1}}$.
In this case we have $\wBinomText{n}{k}{w} = e_k(n)\prod_{i=1}^{k} a_i^{-1}$.
The noncommutative binomial theorem in Theorem~\ref{thm:binomial}
now gives the expansions
\begin{subequations}\label{eqn:e_bin}
\begin{align}
  (x+y)^n &= \sum_{k\geq0} e_k(n)\Big(\prod_{i=1}^{k} a_i\Big) x^k y^{n-k}\\
  \intertext{and}
  (x+y)^n& = \sum_{k\leq n} e_k(n)\Big(\prod_{i=1}^{k} a_i\Big) x^{k} y^{n-k}
\end{align}
\end{subequations}
for $x$ and $y$ noncommutative invertible variables satisfying
\begin{subequations}\label{eqn:e_noncommrel3}
\begin{align}
yx &= \frac{a_2}{a_1} xy,\\
x \frac{a_{s+t}}{a_{s+t-1}}&= \frac{a_{s+t+1}}{a_{s+t}} x,\\
y \frac{a_{s+t}}{a_{s+t-1}}&= \frac{a_{s+t+1}}{a_{s+t}} y.
\end{align}
\end{subequations}
\begin{example}
  Define $\epsilon_a$ to be an operator that shifts all $a_i$ to $a_{i+1}$,
  $i\in\Z$. For example,
  $\epsilon_a (a_1 + \frac{a_{-2}a_{3}}{a_5}) = a_2 + \frac{a_{-1}a_{4}}{a_6}$.
  Now let $x=a_1 t \epsilon_a$ and $y=\epsilon_a$, such that all relations in
  \eqref{eqn:e_noncommrel3} are satisfied. Evaluating \eqref{eqn:e_bin}
  by applying all operators on both sides recovers the generating functions
\begin{align*}
  \prod_{i=1}^{n} (1+a_i t)
  &= \sum_{k \geq 0} e_k(n) t^k \quad \text{and} \quad 
\prod_{i=1}^{n} (1+a_i t) = \sum_{k \leq n} e_k(n) t^k\\
 \intertext{and applying the duality \eqref{eq:symsym}
 and $h_k(\nSet{a_1}{a_n})(-1)^k = h_k(\nSet{(-a_1)}{(-a_n)})$ yields} 
 \prod_{i=1}^{n} (1-a_i t)^{-1} &= \sum_{k \geq 0} h_k(n) t^k \quad \text{and}
\quad \prod_{i=1}^{n} (1-a_i t)^{-1}  = \sum_{k \leq -n} h_k(n) t^k,
\end{align*}

A second choice of $x$ and $y$ would be $x=-a_1 \epsilon_a$ and
$y=t \epsilon_a$. Again, all relations in \eqref{eqn:e_noncommrel3} are
satisfied and we obtain another well-known generating function
(see \cite[p.~208]{DAL})
\begin{align}
  (t-a_1)(t-a_2)\cdots(t-a_n)
  &=\sum_{k \geq 0} e_k(\nSet{(-a_1)}{(-a_n)}) t^{n-k} \label{eqn:e_gf}\\
\intertext{for $n\geq 0$ and}
  (t-a_1)^{-1}(t-a_2)^{-1}\cdots(t-a_n)^{-1}
  &=\sum_{k \geq 0} e_k(\nSet{(-a_1)}{(-a_n)}) t^{n-k}\nonumber\\
    \intertext{and}
  (t-a_1)^{-1}(t-a_2)^{-1}\cdots(t-a_n)^{-1}
  &=\sum_{k \leq n} e_k(\nSet{(-a_1)}{(-a_n)}) t^{n-k}\nonumber
\end{align}
for $n<0$.
\end{example}

The convolutions in Corollary~\ref{cor:wdconv1} and \ref{cor:wdconv2},
respectively, give
\begin{align}\label{eqn:econv1id}
 	e_k(n+m)&=\sum_{j=0}^{k}
 	e_j(\nSet{a_1}{a_n})
 	e_{k-j}(\{a_{n+1}\cdots a_{n+m}\})\\
\intertext{and}
\label{eqn:econv2id}
 	e_k(n+m)&=\sum_{j=k-m}^{n}
 	e_j(\nSet{a_1}{a_n})
 	e_{k-j}(\nSet{a_{n+1}}{a_{n+m}}).
\end{align}
Note that these convolutions are extensions of the corresponding convolution
in \cite[(3.6a)]{Schl1}, since $n, m$ and $k$ can be negative.
One consequence of this observation is the following example.
\begin{example}
  Choose $m=-n$ in \eqref{eqn:econv1id}. Then, by using the duality of $e_k$
  and $h_k$ \eqref{eq:symsym}, we recover the well-known identity
  (cf. \cite{Sa})
\[
  \sum_{j=0}^{k} (-1)^j e_j(\nSet{a_1}{a_n}) h_{k-j}(\nSet{a_1}{a_n})
  = \delta_{k,0}.
\]
More generally, we obtain
\[
  \sum_{j=0}^{k} e_j(\nSet{a_1}{a_n}) h_{k-j}(\nSet{(-a_{n+m+1})}{(-a_{n})})
  = e_k(n+m).
\]
\end{example}

\subsection{Complete homogeneous symmetric functions}
The second choice of the weights is $w(s,t)=\frac{a_{t+1}}{a_{t}}$. In this
case we have $\wBinomText{n}{k}{w} = \mathrm{sgn}(k) h_k(n-k+1) a_1^{-k}$.
The noncommutative binomial theorem in Theorem~\ref{thm:binomial} now gives
the expansions
\begin{subequations}\label{eqn:h_bin}
\begin{align}
  (x+y)^n &= \sum_{k\geq0} h_k(n-k+1)a_{1}^{-k} x^k y^{n-k}\\
  \intertext{and}
  \quad (x+y)^n
          &= \sum_{k\leq n} \mathrm{sgn}(k) h_k(n-k+1)a_{1}^{-k} x^{k} y^{n-k}
\end{align}
\end{subequations}
for $x$ and $y$ noncommutative invertible variables satisfying
\begin{subequations}\label{eqn:h_noncommrel3}
	\begin{align}
		yx &= \frac{a_2}{a_1} xy,\\
		x \frac{a_{t+1}}{a_{t}}&= \frac{a_{t+1}}{a_{t}} x,\\
		y \frac{a_{t+1}}{a_{t}}&= \frac{a_{t+2}}{a_{t+1}} y.
	\end{align}
\end{subequations}
\begin{example}
 Let $x=a_1$ and $y=(t-a_1)\epsilon_a$, such that all relations in
 \eqref{eqn:h_noncommrel3} are satisfied. Evaluating \eqref{eqn:h_bin}
 by applying all operators on both sides recovers the generating functions 
\begin{align}
  t^n&=\sum_{k \geq 0} h_k(\nSet{a_1}{a_{n-k+1}})
       (t-a_1)(t-a_2)\cdots (t-a_{n-k}) \label{eqn:h_gf}\\
\intertext{for $n\geq 0$ (see also \cite[p. 208]{DAL}) and}
  t^n&=\sum_{k \geq 0} h_k(\nSet{a_1}{a_{n-k+1}})
       (t-a_1)^{-1}(t-a_2)^{-1}\cdots (t-a_{n-k})^{-1} \nonumber\\
  \intertext{and}
  t^n&=\sum_{k \leq n} \mathrm{sgn}(k) h_k(\nSet{a_1}{a_{n-k+1}})
       (t-a_1)(t-a_2)\cdots (t-a_{n-k})\nonumber
\end{align}
for $n<0$.
\end{example}

The convolutions in Corollary~\ref{cor:wdconv1} and \ref{cor:wdconv2},
respectively, give
\begin{align}\label{eqn:hconv1id}
 &h_k(n+m-k+1)\nonumber\\&=\sum_{j=0}^{k}
 	h_j(\nSet{a_1}{a_{n-j+1}})
 	h_{k-j}(\nSet{a_{n-j+1}}{a_{n+m-k+1}})\\
 	\intertext{and}
  &h_k(n+m-k+1)\mathrm{sgn}(k)\nonumber\\
  &=\sum_{j=k-m}^{n} \mathrm{sgn}(j) \mathrm{sgn}(k-j)
    h_j(\nSet{a_1}{a_{n-j+1}})
 	h_{k-j}(\nSet{a_{n-j+1}}{a_{n+m-k+1}}).\label{eqn:hconv2id}
\end{align}

\begin{remark}
	By applying the involution~\eqref{eqn:inv6} to the weight $w(s,t) = \frac{a_{s+t}}{a_{s+t-1}}$, we obtain
	\[
		\breve{w}(s,t)=w(s,1-s-t)^{-1}=\frac{a_{-t}}{a_{1-t}},
	\]
	which is the weight for the complete homogeneous symmetric functions subject to the substitution $a_n \mapsto a_{-n+1}$. The reflection formula~\eqref{eqn:refl2b} reduces to~\eqref{eq:symsym1} in this case.
\end{remark}
\subsection{Application to Stirling numbers}

Here we define weighted integers 
\begin{equation}
{}_w [n] := \wBinom{n}{1}{w}.\label{eqn:weighted_n}
\end{equation}
Note that by Proposition \ref{prop:six}, ${}_w[n]$ is well-defined for all
$n\in \mathbb{Z}$.

It is well known that the Stirling numbers of the first kind and the
second kind, denoted by $s(n,k)$ and $S(n,k)$, respectively,
can be defined as the connecting coefficients of the following identities 
\begin{subequations}\label{eqn:Stirling_gf}
\begin{align}
 &t (t-1)\cdots (t-n+1)=\sum_{k=0}^n s (n,k) t^k\label{eqn:Stirling_1}\\
\intertext{and}
&t^n = \sum_{k=0}^n S(n,k) t(t-1)\cdots (t-k+1).\label{eqn:Stirling_2}
\end{align}
\end{subequations}
We also have the recursions 
$$s(n,k)=s(n-1, k-1)-(n-1) s (n-1,k)$$
and 
$$S(n,k)=k S(n-1, k)+S(n-1, k-1)$$
which can uniquely determine the Stirling numbers, given the initial
conditions $s(n,0)=\delta_{n,0}$ and $s(n,k)=0$ for $k>n$, for the Stirling
numbers of the first kind, and $S(n,0)=\delta_{n,0}$ and $S(n,k)=0$ for $k>n$,
for the Stirling numbers of the second kind.
Comparing \eqref{eqn:Stirling_1} to the generating function of the
elementary symmetric function given in \eqref{eqn:e_gf} gives the well known
fact that $s(n,k)$ is the $(n-k)^{\text{th}}$ elementary symmetric function
of $-1, -2,\dots, -n+1$ (cf. \cite[I.2, Ex. 11 (a)]{Mac}).
Similarly, comparison of \eqref{eqn:Stirling_2} and  \eqref{eqn:h_gf}
gives the fact that $S(n,k)$ is the $(n-k)^{\text{th}}$ 
complete homogeneous symmetric function of $1, 2,\dots, k$
(cf. \cite[I.2, Ex. 11 (b)]{Mac}).

We replace each integer showing up in \eqref{eqn:Stirling_gf} by the
weighted integer defined in \eqref{eqn:weighted_n} to 
define weighted analogues of the Stirling numbers 
\begin{subequations}\label{eqn:wStirling_gf}
\begin{align}
  &t (t-{}_w[1])\cdots (t-{}_w[n-1])
    =\sum_{k=0}^n s_w (n,k) t^k\label{eqn:wStirling_1}\\ 
\intertext{and}
  & t^n = \sum_{k=0}^n S_w (n,k) t(t-{}_w[1])
    \cdots (t-{}_w[k-1]).\label{eqn:wStirling_2}
\end{align}
\end{subequations}
From \eqref{eqn:wStirling_gf}, we can obtain the recurrence relations 
\begin{subequations}\label{eqn:wStirling_rec}
\begin{align}
s_w (n,k)&=s_w (n-1, k-1)-{}_w[n-1] s_w (n-1,k)\label{eqn:wStirling_1_rec}\\
\intertext{and}
S_w(n,k)&={}_w[k] S_w(n-1, k)+S_w(n-1, k-1).\label{eqn:wStirling_2_rec}
\end{align}
\end{subequations}
By comparing \eqref{eqn:wStirling_1} to the generating function of the
elementary symmetric function given in \eqref{eqn:e_gf}, 
for any nonnegative integers $n$ and $k$, we get 
\begin{equation}\label{eqn:Stirling1_e}
s_w (n,k)=e_{n-k}(\{-{}_w[0], -{}_w[1], -{}_w[2],\dots, -{}_w[n-1]\}).
\end{equation}
Furthermore, the recurrence relation \eqref{eqn:wStirling_1_rec} is valid
for negative $n$ and $k$ values with $k\le n$.
As a result of using the recurrence relation for $k\le n<0$, we can prove
that \eqref{eqn:Stirling1_e} extends to hold 
for $n, k \in \mathbb{Z}$ with $k\le n$.

Similarly, for the weighted analogue of the Stirling numbers of the
second kind, we compare \eqref{eqn:wStirling_2} to \eqref{eqn:h_gf} and
obtain, for any nonnegative integers $n$ and $k$, 
\begin{equation}\label{eqn:Stirling2_h}
S_w (n,k)=h_{n-k}(\{ {}_w[0], {}_w[1],\dots, {}_w[k]\}).
\end{equation}
Also, the recurrence relation \eqref{eqn:wStirling_2_rec} allows us to
extend the definition of $S_w (n,k)$ for negative $n$ and $k$ values,
with $k\le n$. As a result,  we can prove that \eqref{eqn:Stirling2_h}
holds for $k\le n<0$ as well.


Identities \eqref{eqn:econv1id}, \eqref{eqn:econv2id}, \eqref{eqn:hconv1id}
and \eqref{eqn:hconv2id} satisfied by elementary symmetric functions and
complete homogeneous symmetric functions coming from the convolution
formulas give relations satisfied by weighted analogues of the Stirling
numbers. To do that, however, we need a slightly more generalized definition
of the weighted Stirling numbers.
Namely, we define \emph{$\alpha$-shifted weighted Stirling numbers} by 
\begin{subequations}\label{eqn:wStirling_gf_a}
\begin{align}
  &(t-{}_w[\alpha]) (t-{}_w[\alpha+1])\cdots (t-{}_w[\alpha+n-1])
    =\sum_{k=0}^n s_w ^\alpha (n,k) t^k\label{eqn:wshiftedStirling_1}\\
\intertext{and}
  &t^n = \sum_{k=0}^n S_w ^\alpha(n,k)( t-{}_w[\alpha])(t-{}_w[\alpha+1])
    \cdots (t-{}_w[\alpha+ k-1]),\label{eqn:wshiftedStirling_2}
\end{align}
\end{subequations}
for some parameter $\alpha$. Note that we recover the weighted Stirling
numbers when $\alpha=0$. Then again by comparing to the generating functions
of symmetric functions, we get
\begin{subequations}\label{eqn:walSt-e-h}
\begin{align}
  s_w ^\alpha(n,k)
  &=
e_{n-k}(\{-{}_w[\alpha], -{}_w[\alpha+1],\dots, -{}_w[\alpha + n-1]\}),\\
\intertext{and} 
  S_w ^\alpha(n,k)
 &=h_{n-k}(\{{}_w[\alpha], {}_w[\alpha+1],\dots, {}_w[\alpha+k]\}),
\end{align}
\end{subequations}
for any $n, k\in \mathbb{Z}$ with $k\le n$.
These two identities, combined with \eqref{eq:symsym1}, immediately
imply the relation
\begin{equation}\label{eqn:dual_wal}
  s_w ^\alpha(n,k)=  S_w ^{\alpha+n}(-k-1,-n-1),
\end{equation}
valid for $n, k\in \mathbb{Z}$ with $k\le n$,
that connects the first and second kinds of $\alpha$-shifted
weighted Stirling numbers. This is actually an extension of
a well-known duality for the Stirling numbers which takes the form
\begin{equation}\label{eqn:dual}
s(n,k)=(-1)^{n-k}S(-k,-n),
\end{equation}
that can be recovered from \eqref{eqn:dual_wal} by letting $\alpha=0$
and the letting the weights $w(s,t)\to 1$.
The duality relation in \eqref{eqn:dual} is very classical
and appears, for instance, in the (almost) two centuries old
treatise \cite[p.~305]{vE} by von Ettingshausen, in different notation.
See also the discussion by Knuth~\cite{Kn}.

The recurrence relations for the $\alpha$-shifted weighted Stirling numbers
that extend those in \eqref{eqn:wStirling_rec} are
\begin{subequations}\label{eqn:walStirling_rec}
\begin{align}
  s_w^\alpha (n,k)&=s_w ^\alpha(n-1, k-1)-{}_w[\alpha+n-1]
                    s_w^\alpha (n-1,k)\label{eqn:walStirling_1_rec}\\
\intertext{and}
  S_w^\alpha(n,k)&={}_w[\alpha+k] S_w^\alpha(n-1, k)
                   +S_w^\alpha(n-1, k-1).\label{eqn:walStirling_2_rec}
\end{align}
\end{subequations}
The identities \eqref{eqn:econv1id},
\eqref{eqn:econv2id}, \eqref{eqn:hconv1id} and \eqref{eqn:hconv2id} can be 
written in terms of weighted (and shifted weighted) Stirling numbers
as follows:
\begin{align*}
S_w (n+m, n+m-k) &= \sum_{j=0}^k S_w (n, n-j) s_w ^{n+m-k+1}(k-j-m-1, -m-1),\\
S_w (n+m, n+m-k) &= \sum_{j=k-m}^n S_w (n, n-j) s_w ^{n+m-k+1}(k-j-m-1, -m-1),
\end{align*}
and 
\begin{align*}
s_w (n+m, k-n-m) &= \sum_{j=0}^k s_w (n, n-j) S_w ^{n+m} (k-j-m-1, -m-1),\\
s_w (n+m, k-n-m) &= \sum_{j=k-m}^n s_w (n, n-j) S_w ^{n+m} (k-j-m-1, -m-1).
\end{align*}

\begin{remark}
  The $\alpha$-shifted Stirling numbers have been defined by Remmel and Wachs
  in \cite{RW} when the weighted integer ${}_w [n] $ is given by $[n]_{p,q}$ 
where
$$[n]_{p,q}=\frac{p^n -q^n}{p-q}.$$
\end{remark}

\section{Elliptic Hypergeometric Series}
A very important specialization of the weights in \cite{Schl1}, which also
served as a major motivation for the present work, is the ``elliptic''
specialization. In this section we will recall some important notions of
the theory of elliptic hypergeometric series and study the results from
the previous sections in the elliptic case.

Crucial in the theory of elliptic functions is the
\defn{modified Jacobi theta function}, defined by
\[
	\theta(x;p) \eqdef \prod_{j\geq 0} 
	\left(
	(1-p^jx)(1-\frac{p^{j+1}}{x})
	\right),
	\quad
	\theta(x_1,\dots, x_\ell;p) = 
	\prod_{k=1}^\ell \theta(x_k;p),
\]
where $x,x_1,\dots,x_\ell \neq 0$ and $|p|<1$.
The modified Jacobi theta function satisfies some fundamental
relations \cite[cf. p.~451, Example~5]{Web}, like the inversion formula
\begin{subequations}
	\begin{align}\label{eqn:ellinversion}
		\theta(x;p)=-x \theta(1/x;p),
	\end{align}
	the quasi-periodicity relation
	\begin{align}
		\theta(px;p)=-\frac{1}{x} \theta(x;p),
	\end{align}
	and the three-term identity
	\begin{align}\label{eqn:threeterm}
		\theta(xy,x/y,uz,u/z;p)=
		\theta(uy,u/y,xz,x/z;p)+
		\frac{x}{z}
		\theta(zy,z/y,ux,u/x;p).
	\end{align}
\end{subequations}
We define the theta shifted factorial as
\begin{equation}\label{def:tsf}
	(x;q,p)_k~=~\prod_{i=0}^{k-1}~\theta(xq^i;p),
\end{equation}
where the product is defined for all integers $k$ by \eqref{prod}
and for brevity, we write
\[
(x_1,x_2,\dots,x_\ell;q,p)_k=(x_1;q,p)_k(x_2;q,p)_k \cdots (x_\ell;q,p)_k.
\]
An \defn{elliptic function} is a function of a complex variable that is
meromorphic and doubly periodic. It is well known that elliptic functions can
be obtained as quotients of modified Jacobi theta functions \cite{Ros,Web}.

Let $q=e^{2\pi i\sigma}$, $p=e^{2\pi i\tau}$, with complex $\sigma$, $\tau$,
then a multivariate function over $\C$ of the complex variables
$u_1,\dots,u_n$ is called \defn{totally elliptic}, if it is meromorphic in
each variable with equal periods, $\sigma^{-1}$ and $\tau\sigma^{-1}$, of
double periodicity. The \defn{field of totally elliptic multivariate
functions} is denoted by $\E_{q^{u_1},\dots,q^{u_n};q,p}$.

\subsection{Elliptic weights}
We consider the elliptic specialization of the weights $w(s,t)$
from \cite{Schl1}, here defined for all $s,t \in \Z$ by
\begin{equation}\label{def:ellweights}
	w_{a,b;q,p}(s,t)\eqdef \frac{\ta(aq^{s+2t},bq^{2s+t-2},aq^{t-s-1}/b)}
{\ta(aq^{s+2t-2},bq^{2s+t},aq^{t-s+1}/b)}q
\end{equation}
to obtain the big weights
\begin{equation}\label{def:ellbigweights}
  W_{a,b;q,p}(s,t)\eqdef
  \frac{\ta(aq^{s+2t},bq^{2s},bq^{2s-1},aq^{1-s}/b,aq^{-s}/b)}
{\ta(aq^s,bq^{2s+t},bq^{2s+t-1},aq^{1+t-s}/b,aq^{t-s}/b)}q^t
\end{equation}
for all $s,t \in \Z$ and the \defn{elliptic binomial coefficients}
\begin{equation}\label{def:ellcoef}
  \eBinom{n}{k}{a,b;q,p} \eqdef
  \frac{(q^{1+k},aq^{1+k},bq^{1+k},aq^{1-k}/b;q,p)_{n-k}}
{(q,aq,bq^{1+2k},aq/b;q,p)_{n-k}}.
\end{equation}

It is not hard to check that the elliptic binomial coefficient satisfies
\begin{align*}
&\eBinom{n}{0}{a,b;q,p}=\eBinom{n}{n}{a,b;q,p} =1
\qquad\text{for\/ $n\in \mathbb{Z}$ },\\
\intertext{and for $n,k\in\mathbb{Z}$, provided that $(n+1,k)\neq(0,0)$,}
		&\eBinom{n+1}{k}{a,b;q,p}=
		\eBinom{n}{k}{a,b;q,p}
		+\eBinom{n}{k-1}{a,b;q,p}
		\,W_{a,b;q,p}(k,n+1-k),
\end{align*}
where the recurrence relation is a consequence of the three-term
relation \eqref{eqn:threeterm}. The weight functions \eqref{def:ellweights}
and \eqref{def:ellbigweights} and the elliptic binomial coefficient
\eqref{def:ellcoef} are totally elliptic, i.e., periodic in each of
$\log_q a$, $log_q b$, $k$ and $n$, with equal periods of double periodicity
\cite{Schl0,Schl1}. We denote the field of totally elliptic functions over
$\C$, in the complex variables $\log_qa$ and $\log_qb$, with equal periods
$\sigma^{-1}$, $\tau\sigma^{-1}$ (where $q=e^{2\pi i\sigma}$, $p=e^{2\pi i\tau}$
$\sigma,\tau\in\C$), of double periodicity, by $\E_{a,b;q,p}$.

We now focus on an elliptic version of the noncommutative algebra from
Definition~\ref{def:Cwxy3}. The following definition is not a direct
specialization of the weight-dependent algebra but a convenient extension
of the specialization. 
\begin{definition}\label{def:Cwxy3ell}
Let $x,x^{-1},y,y^{-1},a,b$ be noncommuting variables such that $a$ and $b$
commute with each other. Further let $q$ and $p$ be two complex numbers
with $|p|<1$. Let $\C_{a,b;q,p}[x,x^{-1},y,y^{-1}]$ denote the associative
unital algebra over $\C$, generated by the invertible variables
$x,x^{-1}, y,y^{-1}$ and the set of all totally elliptic functions
$\mathbb E_{a,b;q,p}$, satisfying the relations
\begin{subequations}\label{eqn:noncommrel3ell}
\begin{align}
x^{-1}x &=xx^{-1} = 1,\label{subeqn:rel1ell}\\
y^{-1}y &=yy^{-1} = 1, \label{subeqn:rel2ell}\\
yx &= w_{a,b;q,p}(1,1)xy,\label{subeqn:rel3ell}\\
x f(a,b)&= f(aq,bq^2)x,\label{subeqn:rel4ell}\\
y f(a,b)&= f(aq^2,bq)y\label{subeqn:rel5ell},
\end{align}
\end{subequations}
for all $f\in\E_{a,b;q,p}$.
\end{definition} 
We refer to the variables $x,x^{-1},y,y^{-1},a,b$ forming
$\C_{a,b;q,p}[x,x^{-1},y,y^{-1}]$ as {\em elliptic-commuting} variables.
The actions of $x$ and $y$ on a weight $w(s,t)$ in \eqref{subeqn:rel4} and
\eqref{subeqn:rel5} correspond to the shifts of the parameters $a$ and $b$
as described in \eqref{subeqn:rel4ell} and \eqref{subeqn:rel5ell}.

The elliptic binomial coefficients appeared first in \cite{Schl0} to obtain
an elliptic weighted enumeration of lattice paths and nests of
nonintersecting lattice paths. In \cite{Schl1}, a noncommutative binomial
theorem in $\C_{a,b;q,p}[x,y]$ for the elliptic binomial coefficients and
several convolution formulae analogous to the Chu--Vandermonde convolution
formula were derived. Since these convolutions were equivalent to Frenkel
and Turaev's ${}_{10}V_9$ summation \cite{FT} (see also
\cite[Theorem 2.3.1.]{Ros}), this provided a combinatorial interpretation
of this summation formula, which is one of the main identities in the theory
of elliptic hypergeometric series and can be stated as follows.
\begin{proposition}[\sc Frenkel and Turaev's ${}_{10}V_9$ summation]\label{prop:ft}
	Let $n\in\N_0$ and $a,b,c,d,e, q,p\in\C$ with $|p|<1$.
	Then we have
	\begin{multline}
		\sum_{k=0}^n\frac{\theta(aq^{2k};p)}{\theta(a;p)}
		\frac{(a,b,c,d,e,q^{-n};q,p)_k}
		{(q,aq/b,aq/c,aq/d,aq/e,aq^{n+1};q,p)_k}q^k\\\label{propfteq}
		=
		\frac{(aq,aq/bc,aq/bd,aq/cd;q,p)_n}
		{(aq/b,aq/c,aq/d,aq/bcd;q,p)_n},
	\end{multline}
	where $a^2q^{n+1}=bcde$.
\end{proposition}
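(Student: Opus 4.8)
The plan is to deduce \eqref{propfteq} from the weight-dependent Chu--Vandermonde convolution of Corollary~\ref{cor:wdconv1} (or, equivalently, Corollary~\ref{cor:wdconv2}), specialized to the elliptic weights \eqref{def:ellweights}. Since that convolution was obtained from the binomial theorem of Theorem~\ref{thm:extbinomial} purely by induction, \emph{without} any appeal to a ${}_{10}V_9$ summation, such a derivation is not circular and constitutes a genuine (combinatorial) proof of the Frenkel--Turaev identity. First I would substitute $w=w_{a,b;q,p}$ into \eqref{eqn:wdconv1id}, so that every $\wBinomText{\cdot}{\cdot}{w}$ becomes an explicit elliptic binomial coefficient \eqref{def:ellcoef}, i.e. a ratio of theta shifted factorials, and every big weight becomes \eqref{def:ellbigweights}.

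The core computation is to reduce the conjugated coefficient $x^j y^{n-j}\,\eBinomText{m}{k-j}{a,b;q,p}\,y^{j-n}x^{-j}$ to an ordinary elliptic binomial coefficient with shifted parameters. By the elliptic commutation relations \eqref{subeqn:rel4ell} and \eqref{subeqn:rel5ell}, the operator $x^j y^{n-j}(\cdot)y^{j-n}x^{-j}$ acts on any $f\in\E_{a,b;q,p}$ as the substitution $a\mapsto aq^{2n-j}$, $b\mapsto bq^{n+j}$, so it simply replaces $(a,b)$ in $\eBinomText{m}{k-j}{a,b;q,p}$ by these explicit $q$-shifts. Collecting this factor together with $\eBinomText{n}{j}{a,b;q,p}$ and the product $\prod_{i=1}^{k-j}W_{a,b;q,p}(i+j,n-j)$ into a single summand $T(j)$, I would rewrite the right-hand side of \eqref{eqn:wdconv1id} as $\sum_{j=0}^{k}T(j)$, a sum of ratios of theta shifted factorials, while the left-hand side $\eBinomText{n+m}{k}{a,b;q,p}$ is already a single closed product.

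The decisive step is then to recognise $\sum_{j=0}^{k}T(j)$ as a terminating, balanced, very-well-poised elliptic series: one must extract a very-well-poised factor of the form $\theta(Aq^{2j};p)/\theta(A;p)$ and a geometric factor $q^{j}$, read off five upper parameters $B,C,D,E$ and a terminating $q^{-N}$ from the theta shifted factorials in $T(j)$, and check that the balancing relation $A^{2}q^{N+1}=BCDE$ falls out of the elliptic-weight identities; here the running index $j$ plays the role of the summation variable $k$ in \eqref{propfteq}. The main obstacle I anticipate is organisational: pinning down the exact dictionary between the generic ${}_{10}V_9$ parameters $A,B,C,D,E,N$ and the data $a,b,q^{n},q^{m},q^{k}$ of the elliptic convolution (the symbols $a,b,n$ having different meanings on the two sides), and then verifying that the closed product $\eBinomText{n+m}{k}{a,b;q,p}$ matches the right-hand side of \eqref{propfteq} under this dictionary. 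A secondary subtlety is that the convolution delivers the identity only when several of $B,C,D,E$ are integral powers of $q$ (arising from $q^{-n}$, $q^{-m}$, $q^{-k}$); to reach continuous parameters I would invoke the theory of elliptic functions, observing that both sides of \eqref{propfteq}, regarded as totally elliptic functions of, say, $\log_{q}b$ and $\log_{q}c$ with a bounded number of poles, agree at infinitely many points and therefore coincide identically.

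As an alternative, fully self-contained route, I would instead argue by induction on $n$. The base case $n=0$ is immediate, both sides reducing to $1$. For the inductive step I would use the three-term theta relation \eqref{eqn:threeterm} to split the very-well-poised factor and telescope the sum for $n$ onto the sum for $n-1$; the only real work is choosing the three arguments in \eqref{eqn:threeterm} so that the recursion closes, and this is precisely where the balancing condition $a^{2}q^{n+1}=bcde$ is consumed.
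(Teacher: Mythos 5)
Your main route is exactly the paper's: the paper proves Proposition~\ref{prop:ft} by specializing the (combinatorially/inductively established) weight-dependent convolution of Corollary~\ref{cor:wdconv1} to the elliptic weights \eqref{def:ellweights} (giving Corollary~\ref{cor:ellconv1}, with the conjugation acting as $a\mapsto aq^{2n-j}$, $b\mapsto bq^{n+j}$ just as you computed), matching it to \eqref{propfteq} via the substitution $(a,b,c,d,e,n)\mapsto(bq^{-n}/a,\,q^{-n}/a,\,bq^{1+n+m},\,bq^{-n-m+k}/a,\,q^{-n},\,k)$, and then removing the discreteness of $q^{-n},q^{-m}$ by the same elliptic-function continuation argument you sketch. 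Your parameter dictionary is left implicit where the paper makes it explicit, but the plan is correct and essentially identical; the three-term-identity induction you offer as an alternative is not the paper's route.
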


The convolution formulae in \cite{Schl1}
were already extended to complex values by analytic continuation. In this
section we extend the binomial theorem to negative values of $n$ and $k$
and derive reflection and convolution formulae for the elliptic binomial coefficients. Since we proved the weight-dependent convolution formulae combinatorially in Section~\ref{sec:ref_comb}, we extend the combinatorial proof of the second author in \cite{Schl1} of Proposition~\ref{prop:ft} to negative values.
	
\subsection{Reflection formulae for elliptic binomial coefficients}
In \eqref{eqn:inv} we defined several involutions in the algebra
$\C_w[x,x^{-1},y,y^{-1}]$. Some of these involutions turn out to be
particularly nice in the elliptic case. Recall that 
\begin{align*}
	\widehat{w}(s,t)&=w(t,s)^{-1}, \\
	\widetilde{w}(s,t)&=w(1-s-t,t)^{-1}, \\
	\breve{w}(s,t)&=w(s,1-s-t)^{-1}.
\end{align*}
For the elliptic small weights we obtain
\begin{subequations}\label{eqn:ellinv}
	\begin{align}
	\widehat{w}_{a,b;q,p}(s,t)&=w_{b,a;q,p}(s,t)\label{ellinvhat} \\
	\widetilde{w}_{a,b;q,p}(s,t)&=w_{a/b,1/b;q,p}(s,t) \label{ellinvtilde}\\
	\breve{w}_{a,b;q,p}(s,t)&=w_{1/a,b/a;q,p}(s,t) \label{ellinvbreve},
	\end{align}
\end{subequations}
which follows from \eqref{eqn:ellinversion}.
Equation \eqref{ellinvtilde} was recently introduced by the second and
third author in \cite{SY}. We are now ready to derive the following
reflection formulae from Theorem~\ref{thm:refl1}~and~\ref{thm:refl2}
\begin{corollary}\label{cor:ellrefl}
Let $n,k \in \Z$. Then,
\begin{align*}
  \eBinom{n}{k}{a,b;q,p}
  &=\eBinom{n}{n-k}{b,a;q,p} \prod_{j=1}^{k} W_{a,b;q,p}(j,n-k)  \\
  &= (-1)^{n-k} \mathrm{sgn} (n-k) \eBinom{-k-1}{-n-1}{a/b,1/b;q,p}
    \prod_{j=1}^{n-k} W_{a,b;q,p}(n+1-j,j)^{-1} \\
  &= (-1)^{k} \mathrm{sgn} (k) \eBinom{k-n-1}{k}{1/a,b/a;q,p}
    \prod_{j=1}^{k} W_{a,b;q,p}(j,-j) ,
\end{align*}
where $\mathrm{sgn}(k)$ is defined by \eqref{def:sgn}.
\end{corollary}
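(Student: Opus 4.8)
The plan is to obtain all three identities as immediate elliptic specializations of the reflection formulae already established in Theorems~\ref{thm:refl1} and \ref{thm:refl2}. The crucial observation is the identification $\wBinomText{n}{k}{w_{a,b;q,p}}=\eBinomText{n}{k}{a,b;q,p}$, valid for all $n,k\in\Z$: both sides obey the recursion \eqref{def:wbineq} with big weights $W_{a,b;q,p}$ and agree on the initial conditions $\wBinomText{n}{0}{w}=\wBinomText{n}{n}{w}=1$, as recorded in the text just before Definition~\ref{def:Cwxy3ell}. The same argument applies verbatim with the parameter pair $(a,b)$ replaced by any other pair $(c,d)$, so that $\wBinomText{n}{k}{w_{c,d;q,p}}=\eBinomText{n}{k}{c,d;q,p}$ for every choice of $c,d$. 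Thus, once we know how the three dual weight functions $\widehat{w}$, $\widetilde{w}$, $\breve{w}$ act on the elliptic weight $w_{a,b;q,p}$, each general reflection formula turns directly into an identity among elliptic binomial coefficients.

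First I would take $w=w_{a,b;q,p}$ in Theorem~\ref{thm:refl1}. The resulting dual weight $\widehat{w}_{a,b;q,p}(s,t)=w_{a,b;q,p}(t,s)^{-1}$ equals $w_{b,a;q,p}(s,t)$ by \eqref{ellinvhat}, so $\wBinomText{n}{n-k}{\widehat{w}}=\eBinomText{n}{n-k}{b,a;q,p}$. Since the big-weight product $\prod_{j=1}^{k}W(j,n-k)$ in Theorem~\ref{thm:refl1} is already written in terms of $W=W_{a,b;q,p}$, no transformation of the product is needed, and the first claimed identity follows at once.

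Next I would specialize the two formulae of Theorem~\ref{thm:refl2} in the same way. For \eqref{eqn:refl2a} the dual weight $\widetilde{w}_{a,b;q,p}(s,t)=w_{a,b;q,p}(1-s-t,t)^{-1}$ equals $w_{a/b,1/b;q,p}(s,t)$ by \eqref{ellinvtilde}, whence $\wBinomText{-k-1}{-n-1}{\widetilde{w}}=\eBinomText{-k-1}{-n-1}{a/b,1/b;q,p}$; for \eqref{eqn:refl2b} the weight $\breve{w}_{a,b;q,p}(s,t)=w_{a,b;q,p}(s,1-s-t)^{-1}$ equals $w_{1/a,b/a;q,p}(s,t)$ by \eqref{ellinvbreve}, whence $\wBinomText{k-n-1}{k}{\breve{w}}=\eBinomText{k-n-1}{k}{1/a,b/a;q,p}$. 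Again the big-weight products $\prod_{j=1}^{n-k}W(n+1-j,j)^{-1}$ and $\prod_{j=1}^{k}W(j,-j)$ appearing in Theorem~\ref{thm:refl2} are already expressed through $W=W_{a,b;q,p}$, so substituting them verbatim yields the second and third identities, including the sign factors $(-1)^{n-k}\mathrm{sgn}(n-k)$ and $(-1)^{k}\mathrm{sgn}(k)$.

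There is no genuine analytic obstacle here; the entire content is bookkeeping. The one point that actually requires justification is the three weight identities \eqref{eqn:ellinv}, and these are precisely the elliptic computations supplied just above the corollary, each reducing to the theta inversion formula \eqref{eqn:ellinversion}. The mildest subtlety worth flagging explicitly is that the big weights $\widehat{W}$, $\widetilde{W}$, $\breve{W}$ attached to the dual weights never appear in the final statement: all three products are written with the unmodified $W_{a,b;q,p}$, matching exactly the products on the right-hand sides of Theorems~\ref{thm:refl1} and \ref{thm:refl2}, so no reindexing of the big weights is required when passing to the elliptic case.
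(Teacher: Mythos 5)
Your proof is correct and follows essentially the same route as the paper: there the corollary is likewise derived by specializing Theorems~\ref{thm:refl1} and \ref{thm:refl2} to the elliptic weights, using the identities \eqref{eqn:ellinv} (consequences of the theta inversion formula \eqref{eqn:ellinversion}) to identify the dual weights $\widehat{w}$, $\widetilde{w}$, $\breve{w}$ with $w_{b,a;q,p}$, $w_{a/b,1/b;q,p}$, $w_{1/a,b/a;q,p}$, respectively. Your explicit check that $\wBinomText{n}{k}{w_{c,d;q,p}}=\eBinomText{n}{k}{c,d;q,p}$ for any parameter pair, via the common recursion and initial conditions, is precisely the identification the paper relies on implicitly when passing from weight-dependent to elliptic binomial coefficients.
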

\subsection{Elliptic binomial theorem}
Recall the definition of $\C_{a,b;q,p}[x,y]$ from
Definition~\ref{def:Cwxy3ell}. We will also consider the algebra of formal
power series  $\C_{a,b;q,p}[[x,y,y^{-1}]]$ and $\C_{a,b;q,p}[[x,x^{-1},y]]$.
Then, the weight-dependent noncommutative binomial theorem
(Theorem~\ref{thm:extbinomial}) reduces to the following theorem.
\begin{theorem}\label{thm:ellextbinomial}
Let $n,k\in \Z$. Then we have
\begin{equation}\label{eqn:ellextbinomial}
[x^k y^{n-k}](x+y)^n = \eBinom{n}{k}{a,b;q,p}.
\end{equation}
In particular, we have the expansions
\begin{align}
  (x+y)^n &= \sum_{k\geq0} \eBinom{n}{k}{a,b;q,p} x^k y^{n-k} \quad
            \text{in $\C_{a,b;q,p}[[x,y,y^{-1}]]$} \label{eqn:ellextbin1} \\
  \intertext{or}
  (x+y)^n &= \sum_{k\leq n} \eBinom{n}{k}{a,b;q,p} x^{k} y^{n-k} \quad
            \text{in $\C_{a,b;q,p}[[x,x^{-1},y]]$.} \label{eqn:ellextbin2}
\end{align}
\end{theorem}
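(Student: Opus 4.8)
The plan is to deduce Theorem~\ref{thm:ellextbinomial} directly from the weight-dependent noncommutative binomial theorem (Theorem~\ref{thm:extbinomial}), by exhibiting the elliptic-commuting algebra of Definition~\ref{def:Cwxy3ell} as the specialization of the general algebra $\C_w[x,x^{-1},y,y^{-1}]$ obtained by setting $w(s,t)=w_{a,b;q,p}(s,t)$ as in \eqref{def:ellweights}. Once the algebra and the coefficients are matched, the conclusion \eqref{eqn:ellextbinomial} together with the two expansions \eqref{eqn:ellextbin1} and \eqref{eqn:ellextbin2} is immediate from \eqref{eqn:extbin1} and \eqref{eqn:extbin2}. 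Thus the entire argument reduces to two verifications: that the defining relations \eqref{eqn:noncommrel3ell} specialize \eqref{eqn:noncommrel3}, and that the elliptic binomial coefficient \eqref{def:ellcoef} agrees with $\wBinom{n}{k}{w}$ for this choice of weights.

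For the first verification, the relation \eqref{subeqn:rel3ell} is literally \eqref{subeqn:rel3} with $w(1,1)=w_{a,b;q,p}(1,1)$. The point requiring care is that the parameter shifts encoded in \eqref{subeqn:rel4ell} and \eqref{subeqn:rel5ell}, namely $a\mapsto aq,\ b\mapsto bq^2$ under $x$ and $a\mapsto aq^2,\ b\mapsto bq$ under $y$, reproduce the index shifts $w(s,t)\mapsto w(s+1,t)$ and $w(s,t)\mapsto w(s,t+1)$ demanded by \eqref{subeqn:rel4} and \eqref{subeqn:rel5}. I would check this by substituting the shifts into the three theta arguments of \eqref{def:ellweights}: for instance, under $x$ the argument $aq^{s+2t}$ becomes $aq^{s+2t+1}$, which is exactly the argument $aq^{(s+1)+2t}$ appearing in $w_{a,b;q,p}(s+1,t)$, and the remaining two arguments transform consistently, giving $x\,w_{a,b;q,p}(s,t)=w_{a,b;q,p}(s+1,t)\,x$; the shift under $y$ is analogous. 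These are short but essential bookkeeping computations.

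For the second verification, I would first confirm that the big weight \eqref{def:ellbigweights} equals $W(s,t)=\prod_{j=1}^{t}w_{a,b;q,p}(s,j)$ from \eqref{eq:W}. Each of the three theta ratios in \eqref{def:ellweights} telescopes in $j$ with gap $2$, leaving precisely the product of theta functions displayed in \eqref{def:ellbigweights} together with the accumulated factor $q^{t}$. With $W(s,t)$ identified, it remains to show that $\eBinom{n}{k}{a,b;q,p}$ satisfies the recursion \eqref{def:wbineq1}--\eqref{def:wbineq2}; this is exactly the recurrence recorded immediately after \eqref{def:ellcoef}, whose initial conditions $\eBinom{n}{0}{a,b;q,p}=\eBinom{n}{n}{a,b;q,p}=1$ are read off directly from the theta-shifted factorial, and whose recurrence step is a consequence of the three-term theta relation \eqref{eqn:threeterm}. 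Since both sides then satisfy the same recursion and initial data, they coincide for all $n,k\in\Z$ by the characterization in \eqref{def:wbineq}.

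The main obstacle is therefore computational rather than conceptual: it lies in the theta-function bookkeeping, first in verifying the shift-consistency that certifies Definition~\ref{def:Cwxy3ell} as a genuine specialization of Definition~\ref{def:Cwxy3}, and second in deriving the recurrence for \eqref{def:ellcoef} from the three-term identity \eqref{eqn:threeterm}. Once these are in place, Theorem~\ref{thm:extbinomial} applies verbatim inside the formal power series algebras $\C_{a,b;q,p}[[x,y,y^{-1}]]$ and $\C_{a,b;q,p}[[x,x^{-1},y]]$, yielding both \eqref{eqn:ellextbinomial} and the two displayed expansions.
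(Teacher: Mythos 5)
Your proposal is correct and takes essentially the same route as the paper: there, Theorem~\ref{thm:ellextbinomial} is obtained precisely as the specialization $w(s,t)=w_{a,b;q,p}(s,t)$ of Theorem~\ref{thm:extbinomial}, using the previously recorded facts that the elliptic binomial coefficients \eqref{def:ellcoef} satisfy the defining recursion \eqref{def:wbineq} (as a consequence of the three-term identity \eqref{eqn:threeterm}) and that the parameter shifts in Definition~\ref{def:Cwxy3ell} implement the index shifts of Definition~\ref{def:Cwxy3}. Your two explicit verifications (shift-consistency of the theta arguments and the gap-two telescoping identifying \eqref{def:ellbigweights} with $\prod_{j=1}^{t}w_{a,b;q,p}(s,j)$) are exactly the bookkeeping the paper leaves implicit.
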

The theorem reduces to the noncommutative $q$-binomial theorem by
Formichella and Straub \cite{FS} if we formally let $p\to 0$, $a\to 0$,
then $b\to 0$ (in this order). 

In \cite{Schl1} the elliptic case of the convolution formula in
Corollary~\ref{cor:wdconv1} was already derived for nonnegative $n$ and $m$, but the restriction to nonnegative values was removed by analytic continuation. In the following Corollary, $n$ and $m$ can also be negative integers.
\begin{corollary}\label{cor:ellconv1}
	Let $n,m \in \Z$, $k \geq 0$ and $a,b,q,p \in \C$ with $|p|<1$. Then we have  
	\begin{equation}\label{eqn:ellconv1id}
		\eBinom{n+m}{k}{a,b;q,p}=\sum_{j=0}^{k}
		\eBinom{n}{j}{a,b;q,p}
		\eBinom{m}{k-j}{aq^{2n-j},bq^{n+j};q,p}
		\prod_{i=1}^{k-j}W_{a,b;q,p}(i+j,n-j),
	\end{equation}
	where the elliptic binomial coefficients and $W_{a,b;q,p}(s,t)$ are defined
	by \eqref{def:ellcoef} and \eqref{def:ellbigweights}.
\end{corollary}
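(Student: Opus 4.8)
The plan is to obtain \eqref{eqn:ellconv1id} directly as the elliptic specialization of the weight-dependent Chu--Vandermonde convolution in Corollary~\ref{cor:wdconv1}, by substituting the elliptic weights $w_{a,b;q,p}(s,t)$ of \eqref{def:ellweights} for the generic weights $w(s,t)$. Under this substitution each $w$-binomial coefficient collapses to the corresponding elliptic binomial coefficient: as recorded just after \eqref{def:ellcoef}, the elliptic binomial coefficients satisfy exactly the recursion \eqref{def:wbineq} with $W(s,t)$ replaced by $W_{a,b;q,p}(s,t)$, so $\wBinom{N}{K}{w}=\eBinom{N}{K}{a,b;q,p}$ for the specialized weights. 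Hence the left-hand side of \eqref{eqn:wdconv1id} becomes $\eBinom{n+m}{k}{a,b;q,p}$, the factor $\wBinom{n}{j}{w}$ becomes $\eBinom{n}{j}{a,b;q,p}$, and $\prod_{i=1}^{k-j}W(i+j,n-j)$ becomes $\prod_{i=1}^{k-j}W_{a,b;q,p}(i+j,n-j)$.

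The only step demanding care is the conjugation factor $x^jy^{n-j}\,\wBinom{m}{k-j}{w}\,y^{j-n}x^{-j}$, which in the elliptic setting must be shown to produce the parameter shift asserted in the Corollary. I would invoke the elliptic commutation relations \eqref{subeqn:rel4ell} and \eqref{subeqn:rel5ell}, under which $x$ enacts the substitution $(a,b)\mapsto(aq,bq^2)$ and $y$ enacts $(a,b)\mapsto(aq^2,bq)$. Carrying a totally elliptic $f(a,b)$ leftward across $y^{n-j}$ and then across $x^j$, and noting that the residual $x^jy^{n-j}y^{j-n}x^{-j}$ collapses to $1$, yields
\begin{equation*}
  x^j y^{n-j}\, f(a,b)\, y^{j-n} x^{-j} = f\bigl(aq^{2n-j},\,bq^{n+j}\bigr).
\end{equation*}
Applying this with $f(a,b)=\eBinom{m}{k-j}{a,b;q,p}$, which lies in $\E_{a,b;q,p}$ by the discussion following \eqref{def:ellcoef}, produces precisely the factor $\eBinom{m}{k-j}{aq^{2n-j},bq^{n+j};q,p}$ appearing in \eqref{eqn:ellconv1id}.

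Assembling the pieces converts \eqref{eqn:wdconv1id} into the claimed identity. The final point is the passage from a formal identity in $\C[(w(s,t))_{s,t\in\Z}]$ to one among complex numbers: the generic formula holds identically in the formal weight variables, so it remains valid after the evaluation $w(s,t)=w_{a,b;q,p}(s,t)$ at every $a,b,q,p$ with $|p|<1$ for which the theta quotients are defined, and it persists throughout by meromorphy in $a$ and $b$. I anticipate no genuine obstacle; the one mild subtlety is bookkeeping the compound shift $(a,b)\mapsto(aq^{2n-j},bq^{n+j})$ correctly, which is exactly what the relations \eqref{subeqn:rel4ell}--\eqref{subeqn:rel5ell} encode.
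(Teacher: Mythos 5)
Your proof is correct and follows essentially the same route as the paper: Corollary~\ref{cor:ellconv1} is presented there precisely as the elliptic specialization $w(s,t)=w_{a,b;q,p}(s,t)$ of the weight-dependent convolution formula in Corollary~\ref{cor:wdconv1}, with the shift operators $x^jy^{n-j}(\cdot)\,y^{j-n}x^{-j}$ enacting the parameter substitution $(a,b)\mapsto(aq^{2n-j},bq^{n+j})$ via \eqref{subeqn:rel4ell}--\eqref{subeqn:rel5ell}. Your verification of that compound shift and the identification of the specialized $w$-binomial coefficients with the elliptic binomial coefficients (via the shared recursion and initial conditions) are exactly the details the paper leaves implicit.
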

By changing the summation index $k$ to $j$, substituting the parameters $(a,b,c,d,e,n)$ appearing in
Equation~\eqref{propfteq} by
$(bq^{-n}/a,q^{-n}/a,bq^{1+n+m},bq^{-n-m+k}/a,q^{-n},k)$ and doing some basic manipulations one obtains Corollary~\ref{cor:ellconv1}. As described in \cite{Schl1}, this substitution is reversible if $q^{-n}$ and $q^{-m}$
are treated as complex variables. Therefore, Corollary~\ref{cor:ellconv1} is equivalent to Frenkel
and Turaev's ${}_{10}V_9$ summation~\eqref{propfteq} and we provided a combinatorial proof for $n,m \in \Z$ and $k \geq 0$ in Section~\ref{sec:ref_comb}.

\begin{corollary}\label{cor:ellconv2}
Let $n,m \in \Z$, $k \leq n+m$ and $a,b,q,p \in \C$ with $|p|<1$. Then we have  
 \begin{equation}\label{eqn:ellconv2id}
\eBinom{n+m}{k}{a,b;q,p}=\sum_{j=k-m}^{n}
\eBinom{n}{j}{a,b;q,p}
\eBinom{m}{k-j}{aq^{2n-j},bq^{n+j};q,p}
\prod_{i=1}^{k-j}W_{a,b;q,p}(i+j,n-j),
\end{equation}
where the elliptic binomial coefficients and $W_{a,b;q,p}(s,t)$ are defined
by \eqref{def:ellcoef} and \eqref{def:ellbigweights}.
\end{corollary}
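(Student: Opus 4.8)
The plan is to obtain Corollary~\ref{cor:ellconv2} as the elliptic specialization of the general weight-dependent convolution formula in Corollary~\ref{cor:wdconv2}, in exactly the same way that Corollary~\ref{cor:ellconv1} arises from Corollary~\ref{cor:wdconv1}. First I would set $w=w_{a,b;q,p}$ as in \eqref{def:ellweights} throughout Corollary~\ref{cor:wdconv2}, working in the algebra of formal power series over $\C_{a,b;q,p}$ from Definition~\ref{def:Cwxy3ell} so that the variables $x,y$ obey the elliptic-commuting relations \eqref{subeqn:rel4ell}--\eqref{subeqn:rel5ell}. The key identification is that, for this choice of weights, $\wBinomText{n}{k}{w}=\eBinom{n}{k}{a,b;q,p}$ for all $n,k\in\Z$: both sides satisfy the same initial conditions and the same recurrence (the latter being a consequence of the three-term relation \eqref{eqn:threeterm}, as recorded just before Definition~\ref{def:Cwxy3ell}), and the recursion \eqref{def:wbineq} determines the coefficients uniquely. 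Likewise $W(s,t)$ specializes to $W_{a,b;q,p}(s,t)$ of \eqref{def:ellbigweights}.

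With these substitutions, the left-hand side $\wBinomText{n+m}{k}{w}$, the factor $\wBinomText{n}{j}{w}$, and the product $\prod_{i=1}^{k-j}W(i+j,n-j)$ in \eqref{eqn:wdconv2id} turn directly into the corresponding elliptic quantities in \eqref{eqn:ellconv2id}. The one genuinely computational step is to evaluate the shifted middle factor $x^j y^{n-j}\,\wBinomText{m}{k-j}{w}\,y^{j-n}x^{-j}$. Writing $\eBinom{m}{k-j}{a,b;q,p}=f(a,b)$ and pushing $f$ to the left, I would use \eqref{subeqn:rel5ell} to get $y^{n-j}f(a,b)=f(aq^{2(n-j)},bq^{n-j})\,y^{n-j}$, and then \eqref{subeqn:rel4ell} to get $x^j f(aq^{2(n-j)},bq^{n-j})=f(aq^{2n-j},bq^{n+j})\,x^j$; since $x^jy^{n-j}\cdot y^{j-n}x^{-j}=1$, the middle factor collapses to $\eBinom{m}{k-j}{aq^{2n-j},bq^{n+j};q,p}$, which is precisely the second elliptic binomial coefficient appearing in \eqref{eqn:ellconv2id}. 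This is the same parameter shift that produces Corollary~\ref{cor:ellconv1}, so no new work is needed here.

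Finally I would account for the summation range and the degenerate cases. The range $k-m\le j\le n$ is inherited unchanged from \eqref{eqn:wdconv2id}, and the hypothesis $k\le n+m$ matches the regime in which the expansion \eqref{eqn:extbin2} underlying Corollary~\ref{cor:wdconv2} is valid; for $n+m<k<0$ both sides vanish, as already noted for the general formula, so the identity in fact persists for all $k<0$ as well. I expect the main obstacle to be bookkeeping rather than conceptual: one must justify that specializing the \emph{formal} identity of Corollary~\ref{cor:wdconv2} (stated in $\C[(w(s,t))_{s,t\in\Z}]$) to the explicit theta-quotient weights is legitimate, i.e.\ that the shift operators $x,y$ act on those weights exactly through \eqref{subeqn:rel4ell}--\eqref{subeqn:rel5ell}, so that the resulting identity is an identity of totally elliptic functions in $\E_{a,b;q,p}$. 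As an independent check, one could instead derive \eqref{eqn:ellconv2id} from Corollary~\ref{cor:ellconv1} via the equivalence of the two convolution formulae, using the reflection formula of Theorem~\ref{thm:refl1} together with the involution \eqref{eqn:inv1}, mirroring the argument given after Corollary~\ref{cor:wdconv2}.
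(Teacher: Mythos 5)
Your proposal is correct and follows essentially the same route as the paper: Corollary~\ref{cor:ellconv2} is obtained by specializing the weight-dependent convolution formula of Corollary~\ref{cor:wdconv2} to the elliptic weights \eqref{def:ellweights}, identifying $\wBinomText{n}{k}{w}$ with $\eBinom{n}{k}{a,b;q,p}$ via the shared recurrence (a consequence of the three-term identity \eqref{eqn:threeterm}), and evaluating the conjugated middle factor through the shift relations \eqref{subeqn:rel4ell}--\eqref{subeqn:rel5ell} to produce $\eBinom{m}{k-j}{aq^{2n-j},bq^{n+j};q,p}$. Your closing remark about instead deducing it from Corollary~\ref{cor:ellconv1} via the equivalence of the two convolution formulae is precisely the observation the paper itself makes after the statement, so nothing is missing.
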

As worked out in Section~\ref{sec:conv}, these two Convolution formulae are equivalent. Therefore, also Corollary~\ref{cor:ellconv2} is equivalent to Frenkel
and Turaev's ${}_{10}V_9$ summation~\eqref{propfteq}.
By repeated analytic continuation we can extend both corollaries to
complex $n$ and $m$.

\section{A matrix inversion}
In this section we will use the fact that Corollary~\ref{cor:wdconv1}
is true for all integers $n,m$ to obtain a weight-dependent matrix inversion.
Matrix inversions are an important tool in combinatorics and special
functions. They are important, for instance, in the theory of ordinary,
basic and elliptic hypergeometric functions \cite{KratMat,SchlMat,War}.
Given an infinite-dimensional lower-triangular matrix
$F=(f_{n,k})_{n,k \in \Z}$ with $f_{n,k}=0$ unless $n\geq k$, the matrix
$G=(g_{k,l})_{k,l \in \Z}$ is the inverse matrix of $F$ if and only if
\begin{equation*}
\sum_{k=l}^{n} f_{n,k} g_{k,l} = \delta_{n,l} \quad \text{for all $n,l \in \Z$}.
\end{equation*}
For convenience, in this section sums of the form $\sum_{j=u}^{v}$ are defined to be $0$ if $v \leq u-1$.
\begin{theorem}
Let $(w(s,t))_{s,t\in\Z},x,y$ be noncommuting variables as in
\eqref{eqn:noncommrel3} and $m$ be some integer.
If $F=(f_{n,k})_{n,k \in \Z}$ with
\begin{subequations}
\begin{align}
  f_{n,k}&= x^ky^{-m-k-1} \wBinom{m+n}{n-k}{w} y^{m+k+1} x^{-k}
           \prod_{i=1}^{n-k} W(i+k,-m-k-1),\\
\intertext{and $f_{n,k}=0$ if $k>n$, then $G=(g_{k,l})_{k,l \in \Z}$ with}
g_{k,l}&=x^l \wBinom{-m-l-1}{k-l}{w} x^{-l}	
\end{align}
and $g_{k,l} = 0$ if $l>k$ is the inverse matrix of $F$. 
\end{subequations}
\end{theorem}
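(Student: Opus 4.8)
The plan is to verify the single defining relation $\sum_{k=l}^{n} f_{n,k}\,g_{k,l} = \delta_{n,l}$ for all $n,l\in\Z$, by recognizing its left-hand side as an $x^{l}$-conjugate of the right-hand side of the convolution formula in Corollary~\ref{cor:wdconv1}, specialized to the parameter choice $(n,m,k)\mapsto(-m-l-1,\,m+n,\,n-l)$. With this choice the convolution reads
\[
\wBinom{n-l-1}{n-l}{w}=\sum_{j=0}^{n-l}\wBinom{-m-l-1}{j}{w}\left(x^{j}y^{-m-l-1-j}\,\wBinom{m+n}{n-l-j}{w}\,y^{j+m+l+1}x^{-j}\right)\prod_{i=1}^{n-l-j}W(i+j,-m-l-1-j).
\]
First I would note that, for $n\ge l$, its left-hand side is already $\delta_{n,l}$: when $n>l$ it sits in the vanishing region $0\le n-l-1<n-l$ from \eqref{eqn:zeroregions}, and when $n=l$ it is $\wBinomText{-1}{0}{w}=1$ by \eqref{def:wbineq1}. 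The complementary case $n<l$ is immediate, since then the matrix sum is empty by the summation convention and $\delta_{n,l}=0$, and Corollary~\ref{cor:wdconv1} (whose lower index $n-l$ must be nonnegative) is simply not invoked.

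The key structural observation is that both $f_{n,k}$ and $g_{k,l}$ are scalars, i.e.\ elements of $\C[(w(s,t))_{s,t\in\Z}]$, because conjugation by a monomial in $x,y$ only shifts weight indices. Concretely, from \eqref{subeqn:rel4}--\eqref{subeqn:rel5} together with their inverse versions (2.1i) and (2.1j) one gets, for integers $a,b$, the rule $x^{a}y^{b}\,w(s,t)\,y^{-b}x^{-a}=w(s+a,t+b)$; hence conjugating any polynomial in the weights by $x^{a}y^{b}$ replaces each $w(s,t)$ by $w(s+a,t+b)$, and in particular pure conjugation by $x^{l}$ sends $W(s,t)$ to $W(s+l,t)$. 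Since the weights commute among themselves, $f_{n,k}$ and $g_{k,l}$ commute and may be reordered freely.

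Next I would match summands. Setting $j=k-l$ (so the matrix range $l\le k\le n$ becomes $0\le j\le n-l$) and writing $T_{j}$ for the $j$-th summand of the convolution above, I claim
\[
x^{l}\,T_{k-l}\,x^{-l}=f_{n,k}\,g_{k,l}.
\]
Indeed, conjugating $T_{k-l}$ by $x^{l}$ turns its first factor $\wBinomText{-m-l-1}{k-l}{w}$ into the factor $x^{l}\wBinomText{-m-l-1}{k-l}{w}x^{-l}$ of $g_{k,l}$; it composes the inner shift $x^{k-l}y^{-m-k-1}(\,\cdot\,)y^{m+k+1}x^{-(k-l)}$ acting on $\wBinomText{m+n}{n-k}{w}$ into the shift $x^{k}y^{-m-k-1}(\,\cdot\,)y^{m+k+1}x^{-k}$ of $f_{n,k}$; and it carries $\prod_{i=1}^{n-k}W(i+k-l,-m-k-1)$ to $\prod_{i=1}^{n-k}W(i+k,-m-k-1)$. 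All of this is just additivity of the shift vectors, $(l,0)+(k-l,-m-k-1)=(k,-m-k-1)$. Summing over $k$ and conjugating the whole convolution identity by $x^{l}$ then yields, for $n\ge l$,
\[
\sum_{k=l}^{n}f_{n,k}\,g_{k,l}=x^{l}\left(\sum_{j=0}^{n-l}T_{j}\right)x^{-l}=x^{l}\,\wBinom{n-l-1}{n-l}{w}\,x^{-l}=x^{l}\,\delta_{n,l}\,x^{-l}=\delta_{n,l},
\]
the final equality holding because $\delta_{n,l}$ is a constant and hence fixed by conjugation. I expect the main obstacle to be the bookkeeping in the summand identification $x^{l}T_{k-l}x^{-l}=f_{n,k}g_{k,l}$: one must simultaneously track three shifts (on the two binomial coefficients and on the product of big weights $W$), confirm they align under $(l,0)+(k-l,-m-k-1)=(k,-m-k-1)$, and be careful with the extended product convention \eqref{prod} at the boundary of the summation range. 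Once this matching is in place, the collapse of the convolution to $\wBinomText{n-l-1}{n-l}{w}=\delta_{n,l}$ finishes the proof.
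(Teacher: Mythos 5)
Your proposal is correct and follows essentially the same route as the paper's own proof: the same substitution $(n,m,k)\mapsto(-m-l-1,\,m+n,\,n-l)$ into Corollary~\ref{cor:wdconv1}, the same re-indexing $j=k-l$, and the same conjugation by $x^{l}$ to identify the convolution with $\sum_{k=l}^{n}f_{n,k}\,g_{k,l}=\delta_{n,l}$, with the $n<l$ case handled by the empty-sum convention. The only difference is that you spell out details the paper leaves implicit (why $\wBinomText{n-l-1}{n-l}{w}=\delta_{n,l}$ via \eqref{eqn:zeroregions} and \eqref{def:wbineq1}, and the explicit shift bookkeeping), which is harmless.
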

Both $f_{n,k}$ and $g_{k,l}$ are formal in $\C[(w(s,t))_{s,t\in\Z}]$, since
they contain noncommuting variables $x$ and $y$, which can be understood
to be shift operators that cancel after shifting the weights $w(s,t)$.
\begin{proof}
We begin with Corollary~\ref{cor:wdconv1} and substitute
$(n,m,k) \mapsto (-m-l-1,m+n,n-l)$ for some $m,l,n \in \Z$ and $l\leq n$
to obtain that $\wBinomText{n-l-1}{n-l}{w}$ is equal to
\begin{align*}
\sum_{k=0}^{n-l}&\Bigg(\wBinom{-m-l-1}{k}{w}
\left(x^ky^{-m-l-k-1}\,\wBinom{m+n}{n-l-k}{w}y^{m+l+k+1}x^{-k}\right)\\
&\quad\times\prod_{i=1}^{n-l-k}W(i+k,-m-l-k-1)\Bigg)\\
&\quad=\sum_{k=l}^{n}\Bigg(\left(x^{k-l}y^{-m-k-1}\,
  \wBinom{m+n}{n-k}{w}y^{m+k+1}x^{-k+l}\right)\\
&\qquad\qquad\times\prod_{i=1}^{n-k}W(i+k-l,-m-k-1)\wBinom{-m-l-1}{k-l}{w}\Bigg).
\end{align*}
Multiplying $x^l$ from the left and $x^{-l}$ from the right yields
\begin{align*}	 		
\sum_{k=l}^{n} f_{n,k} g_{k,l} = x^l \wBinom{n-l-1}{n-l}{w} x^{-l} = \delta_{n,l}
\end{align*}
for all $l\leq n \in \Z$. For $l>n$, the sum is defined to be $0$,
which completes the proof.
\end{proof}

\bibliographystyle{plain}
\bibliography{bibliography}

\begin{thebibliography}{10}

\bibitem{DAL}
Ernesto Damiani, Ottavio~M. D'Antona, and Daniel~E. Loeb.
\newblock The complementary symmetric functions: connection constants using
  negative sets.
\newblock {\em Adv. Math.}, 135(2):207--219, 1998.

\bibitem{FS}
Sam Formichella and Armin Straub.
\newblock Gaussian binomial coefficients with negative arguments.
\newblock {\em Ann. Comb.}, 23(3-4):725--748, 2019.

\bibitem{FT}
Igor~B. Frenkel and Vladimir~G. Turaev.
\newblock Elliptic solutions of the {Y}ang-{B}axter equation and modular
  hypergeometric functions.
\newblock In {\em The {A}rnold-{G}elfand mathematical seminars}, pages
  171--204. Birkh\"{a}user Boston, Boston, MA, 1997.

\bibitem{Kn}
Donald~E. Knuth.
\newblock Two notes on notation.
\newblock {\em Amer. Math. Monthly}, 99(5):403--422, 1992.

\bibitem{KratMat}
Christian Krattenthaler.
\newblock A new matrix inverse.
\newblock {\em Proc. Amer. Math. Soc.}, 124(1):47--59, 1996.

\bibitem{L}
Daniel~E. Loeb.
\newblock Sets with a negative number of elements.
\newblock {\em Adv. Math.}, 91(1):64--74, 1992.

\bibitem{Mac}
Ian~G. Macdonald.
\newblock {\em Symmetric functions and {H}all polynomials}.
\newblock Oxford Classic Texts in the Physical Sciences. The Clarendon Press,
  Oxford University Press, New York, second edition, 2015.
\newblock With contribution by A. V. Zelevinsky and a foreword by Richard
  Stanley, Reprint of the 2008 paperback edition [ MR1354144].

\bibitem{Sul}
Cormac O'Sullivan.
\newblock Symmetric functions and a natural framework for combinatorial and
  number theoretic sequences.
\newblock {\em Preprint}, arXiv:2203.03023, 2022.

\bibitem{RW}
Jeffrey~B. Remmel and Michelle~L. Wachs.
\newblock Rook theory, generalized {S}tirling numbers and {$(p,q)$}-analogues.
\newblock {\em Electron. J. Combin.}, 11(1):Research Paper 84, 48, 2004.

\bibitem{Ros}
Hjalmar Rosengren.
\newblock Elliptic hypergeometric functions.
\newblock In Howard~S. Cohl and Mourad E.~H. Ismail, editors, {\em Lectures on
  Orthogonal Polynomials and Special Functions}, pages 213--279. Cambridge
  University Press, Cambridge, 2020.

\bibitem{Sa}
Bruce~E. Sagan.
\newblock {\em The symmetric group}, volume 203 of {\em Graduate Texts in
  Mathematics}.
\newblock Springer-Verlag, New York, second edition, 2001.

\bibitem{SchlMat}
Michael Schlosser.
\newblock Multidimensional matrix inversions and {$A_r$} and {$D_r$} basic
  hypergeometric series.
\newblock {\em Ramanujan J.}, 1(3):243--274, 1997.

\bibitem{Schl0}
Michael~J. Schlosser.
\newblock Elliptic enumeration of nonintersecting lattice paths.
\newblock {\em J. Combin. Theory Ser. A}, 114(3):505--521, 2007.

\bibitem{Schl1}
Michael~J. Schlosser.
\newblock A noncommutative weight-dependent generalization of the binomial
  theorem.
\newblock {\em S\'{e}m. Lothar. Combin.}, 81:Art. B81j, 24, 2020.

\bibitem{SY}
Michael~J. Schlosser and Meesue Yoo.
\newblock Elliptic solutions of dynamical {L}ucas sequences.
\newblock {\em Entropy}, 23(2):Paper No. 183, 14, 2021.

\bibitem{Spr}
Renzo Sprugnoli.
\newblock Negation of binomial coefficients.
\newblock {\em Discrete Math.}, 308(22):5070--5077, 2008.

\bibitem{vE}
Andreas von Ettingshausen.
\newblock {\em Die combinatorische {A}nalysis}.
\newblock Wallishausser Verlag, Wien, 1826.

\bibitem{War}
S.~Ole Warnaar.
\newblock Summation and transformation formulas for elliptic hypergeometric
  series.
\newblock {\em Constr. Approx.}, 18(4):479--502, 2002.

\bibitem{Web}
Heinrich Weber.
\newblock {\em Elliptische {F}unctionen und algebraische {Z}ahlen}.
\newblock Vieweg-Verlag, Braunschweig, 1891.

\end{thebibliography}

\end{document}